\documentclass[11pt,a4paper,reqno]{amsart}
\usepackage{amsmath}
\usepackage{amsfonts}
\usepackage{amssymb}
\usepackage{amscd}
\usepackage{url}
\usepackage{enumerate}
\usepackage[pdftex,bookmarks=true]{hyperref}
\usepackage{xcolor}
\numberwithin{equation}{section}

\renewcommand\Re{{\operatorname{Re}}}
\renewcommand\Im{{\operatorname{Im}}}

\newcommand\R{{\mathbf{R}}}
\newcommand\C{{\mathbf{C}}}

\renewcommand\P{{\mathbf{P}}}
\newcommand\E{{\mathbf{E}}}

\newcommand\T{{\mathbf{T}}}

\newcommand\Var{{\operatorname{Var}}}
\newcommand\Cov{{\operatorname{Cov}}}
\newcommand\Area{{\operatorname{Area}}}

\newcommand\dist{{\operatorname{dist}}}
\newcommand\Z{{\mathbf{Z}}}



\newcommand\eps{\varepsilon}

\newcommand\Bg{{\mathbf g}}

%


\newcommand\CA{{\mathcal A}}
\newcommand\CB{{\mathcal B}}
\newcommand\CC{{\mathcal C}}
\renewcommand\CD{{\mathcal D}}
\newcommand\CE{{\mathcal E}}
\newcommand\CF{{\mathcal F}}

\newcommand\CM{{\mathcal M}}

\newcommand\CT{{\mathcal T}}

\newcommand\CZ{{\mathcal Z}}



\newcommand{\norm}[1]{\left\lVert #1 \right\rVert}
\newcommand{\ab}[1]{\left| #1 \right|}


\renewcommand\mod{\ \operatorname{mod}\ }

\newcommand\ep{\varepsilon}

\newcommand\height{\mathbf {ht}}

\parindent = 0 pt
\parskip = 12 pt

\textwidth=6in
\oddsidemargin=0in
\evensidemargin=0in

\theoremstyle{plain}
  \newtheorem{theorem}{Theorem}[section]	
  \newtheorem{conjecture}[theorem]{Conjecture}
  
  \newtheorem{assumption}[theorem]{Assumption}

  \newtheorem{fact}[theorem]{Fact}
  \newtheorem{lemma}[theorem]{Lemma}
  
  \newtheorem{question}[theorem]{Question}
  \newtheorem{example}[theorem]{Example}
  \newtheorem{condition}{Condition}
  \newtheorem{remark}[theorem]{Remark}
  
  \newtheorem{claim}[theorem]{Claim}

\theoremstyle{definition}

\begin{document}

\title[Random eigenfunctions on flat tori]{Random eigenfunctions on flat tori: universality for the number of intersections}
\author{Mei-Chu Chang, Hoi Nguyen, Oanh Nguyen, Van Vu}

\address{Department of Mathematics, University of California,
  Riverside, CA 92521}
\email{mcc@math.ucr.edu}

\address{Department of Mathematics, The Ohio State University, Columbus, Ohio 43210}
\email{nguyen.1261@math.osu.edu}

\address{Department of Mathematics, Yale University, New Haven, CT 06520}
\email{oanh.nguyen@yale.edu; van.vu@yale.edu}

\subjclass[2010]{15A52,11B25, 60C05, 60G50}
\keywords{arithmetic random waves, universality phenomenon, arithmetic progressions}

\thanks{M.~C.~Chang is partially supported by NSF grant~DMS~1600154, the author thanks the Mathematics Department of 
University of California at Berkeley for its hospitality. H.~Nguyen is
partially supported by NSF grant~DMS~1600782. O.~Nguyen and V.~Vu are
partially  supported by NSF grant DMS-1307797 and AFORS grant FA9550-12-1-0083.}

\maketitle
\begin{abstract}
We show that several statistics of the number of intersections between
random eigenfunctions of general eigenvalues with a given smooth curve in
flat tori are universal under various families of randomness. 
\end{abstract}

\tableofcontents
\section{Introduction}

Let $\CM$ be a smooth Riemannian manifold. Let $F$ be a real-valued eigenfunction of the Laplacian on $\CM$ with eigenvalues $\lambda^2$,
$$-\Delta F = \lambda^2 F.$$
The nodal set $N_F$ is defined to be
$$N_F :=\{x\in \CM, F(x)=0\}.$$
The study of $N_F$ is extremely important in analysis and differential geometry. In this note we are simply interested in the case when $\CM$ is the flat tori $\T^d =\R^d/\Z^d$ with  $d\ge 2$; more specifically we will be focusing on the intersection set of $N_F$ with a given reference curve.

Let $\CC\subset \CM$ be a curve assumed to have unit  length with the arc-length parametrization $\gamma: [0,1] \to \CM$. The nodal intersection between $F$ and $\CC$ is defined as
$$\CZ(F) := \# \{x : F(x) = 0\} \cap \CC.$$
\subsection{Deterministic results in $\T^2$} It is known that all eigenvalues $\lambda^2$ have the form $4\pi^2 m, m\in \Z^+$. Let $\CE_\lambda$ be the collection of $\mu=(\mu_1,\mu_2) \in \Z^2$ such that
$$\mu_1^2 + \mu_2^2 = m.$$
Denote $N=N_m = \# \CE_\lambda$, that is $N=r_2(m)$. Note that  in this case, if $m=m_1^2 m_2$ with $m_1=2^r \prod_{q_k \equiv 3\mod 4} q_k^{b_k}$ and $m_2= 2^c \prod_{p_j \equiv 1 \mod 4} p_j^{a_j}$ ($c=0,1$) then (see, for example, \cite{S})
$$N = \prod_j (a_j+1).$$

The toral eigenfunctions $f(x) =e^{2\pi i \langle \mu, x\rangle}, \mu \in \CE_\lambda$ form an orthonormal basis in the eigenspace corresponding to $\lambda^2$.  We first introduce several deterministic results by Bourgain and Rudnick from \cite{BR0,BR1,BR2}. 

\begin{theorem}\label{theorem:det:upper}
Let $\CC\subset \T^2$ be a real analytic curve with nowhere vanishing curvature, then
$$\CZ(F) \le c \lambda.$$
\end{theorem}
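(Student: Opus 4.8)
\emph{Sketch of the argument.} The conclusion is homogeneous in $F$, so normalise $\|F\|_{L^2(\T^2)}=1$; writing $F(x)=\sum_{\mu\in\CE_\lambda}c_\mu e^{2\pi i\langle\mu,x\rangle}$ this means $\sum_\mu|c_\mu|^2=1$, hence $\sum_\mu|c_\mu|\le N^{1/2}$, and we recall $N=r_2(m)=m^{o(1)}=\lambda^{o(1)}$. Since $\gamma$ is real analytic it extends to a holomorphic map $\tilde\gamma$ on a complex neighbourhood $\Omega_{\delta_0}:=\{z\in\C:\dist(z,[0,1])<\delta_0\}$ of $[0,1]$, for some $\delta_0>0$ depending only on $\CC$; and $F$, being a trigonometric polynomial, is entire on $\C^2$. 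Thus $g:=F\circ\tilde\gamma$ is holomorphic on $\Omega_{\delta_0}$, agrees with $F\circ\gamma$ on $[0,1]$, and $\CZ(F)\le\#\{t\in[0,1]:g(t)=0\}$. Because $2\pi|\mu|=\lambda$ for $\mu\in\CE_\lambda$ and $|\Im\tilde\gamma(z)|\le M$ on the compact set $\overline{\Omega_{\delta_0/2}}$ for a constant $M=M(\CC)$, we obtain the growth estimate
\begin{equation*}
\sup_{z\in\overline{\Omega_{\delta_0/2}}}|g(z)|\ \le\ \sum_\mu|c_\mu|\,e^{2\pi|\mu|M}\ \le\ N^{1/2}e^{M\lambda}\ =\ e^{O(\lambda)}.
\end{equation*}

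The opposite, lower bound is where the curvature hypothesis enters, and it is the one nontrivial input: I would invoke the $L^2$ restriction theorem of Bourgain--Rudnick \cite{BR1}, which asserts that for any real analytic arc $A$ with nowhere vanishing curvature one has $\|F\|_{L^2(A)}\gtrsim_A\|F\|_{L^2(\T^2)}$. Fix $L=\lceil 8/\delta_0\rceil$ and split $[0,1]$ into consecutive intervals $I_1,\dots,I_L$ of length $\ell:=1/L\le\delta_0/8$; each arc $\CC_k:=\gamma(I_k)$ is again real analytic with nowhere vanishing curvature, so $\|g\|_{L^2(I_k)}=\|F\|_{L^2(\CC_k)}\ge c_k>0$ with $c_k=c_k(\CC)$, and in particular there is $t_k\in I_k$ with $|g(t_k)|\ge c_k\,\ell^{-1/2}=:c_k'>0$; this also shows $g\not\equiv0$, so its zeros on $[0,1]$ are finite in number.

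It remains to count zeros, which is a direct application of Jensen's formula. For each $k$ we have $I_k\subseteq\overline{D(t_k,\ell)}\subset D(t_k,2\ell)$ and $\overline{D(t_k,4\ell)}\subset\overline{\Omega_{\delta_0/2}}\subset\Omega_{\delta_0}$, so Jensen's inequality applied to $g$ on the disc $D(t_k,4\ell)$ (with $g(t_k)\ne0$) gives
\begin{equation*}
\#\{t\in I_k:g(t)=0\}\ \le\ \#\{z\in D(t_k,2\ell):g(z)=0\}\ \le\ \frac{1}{\log2}\,\log\frac{\sup_{\partial D(t_k,4\ell)}|g|}{|g(t_k)|}\ \le\ \frac{1}{\log2}\,\log\frac{e^{O(\lambda)}}{c_k'}\ =\ O(\lambda),
\end{equation*}
with implied constant depending only on $\CC$. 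Summing over the $L=O_\CC(1)$ intervals yields $\CZ(F)\le\sum_{k=1}^L\#\{t\in I_k:g(t)=0\}=O(\lambda)$, which is the claim.

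I expect the only genuine difficulty to lie in the Bourgain--Rudnick restriction estimate invoked above: its proof requires expanding $\|F\|_{L^2(\CC_k)}^2=\sum_{\mu,\nu\in\CE_\lambda}c_\mu\overline{c_\nu}\int e^{2\pi i\langle\mu-\nu,\gamma(t)\rangle}\,dt$, bounding the off-diagonal oscillatory integrals by $\lesssim|\mu-\nu|^{-1/2}$ through van der Corput's estimate (this is exactly where nowhere vanishing curvature is used), and controlling their sum by means of the arithmetic fact that lattice points on the circle $|\xi|^2=m$ do not cluster in short arcs. Everything else — the complexification, the exponential growth bound, and the Jensen count — is routine, and the linear exponent in $\CZ(F)\le c\lambda$ is forced by $|\mu|\asymp\lambda$ together with the $F$-independent width $\delta_0$ of the strip of analyticity.
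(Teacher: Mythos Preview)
Your argument is correct and follows essentially the same route the paper indicates: the paper deduces Theorem~\ref{theorem:det:upper} by combining the Bourgain--Rudnick $L^2$ restriction bound \eqref{eqn:restriction} with the Toth--Zelditch zero-counting result \cite{TZ}, and what you have written is precisely an explicit version of the latter (complexify, bound $|g|\le e^{O(\lambda)}$ on a fixed-width strip, feed the restriction lower bound into Jensen's inequality). The paper in fact reuses this very Jensen-on-a-strip computation later as Lemma~\ref{lemma:Jensen:d=2}, so your unpacking of \cite{TZ} matches the paper's own toolkit.
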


The constant $c$ depends on the curve $\CC$.  This bound can be achieved from \cite{TZ} once we have 
$$\int_\CC |F|^2 d \gamma \gg e^{-c\lambda} \int_{\T^2} |F(x)|^2 dx.$$
This type of restriction result was obtained in \cite{BR1} in the stronger form
\begin{equation}\label{eqn:restriction}
\int_\CC |F|^2 d \gamma \gg  \int_{\CM} |F(x)|^2 dx.
\end{equation}

Henceforth the bound of Theorem \ref{theorem:det:upper} follows immediately. 

The lower bound for $\CZ(F)$ is also of special interest. Let $B_\lambda$ denote the maximal number of lattice points which lie on an arc of size $\sqrt{\lambda}$ on the circle $|x|=\lambda$
$$B_\lambda = \max_{|x| =\lambda} \# \{\mu \in \CE: |x-\mu| \le \sqrt{\lambda}\}.$$

\begin{theorem}\cite{BR2}\label{theorem:BR}
If $\CC \subset \T^2$ is smooth with nowhere vanishing curvature, then
$$\CZ(F) \gg \frac{\lambda}{B_\lambda^{5/2}}.$$
\end{theorem}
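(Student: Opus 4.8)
The plan is to transfer the problem to the one-variable function $g(t):=F(\gamma(t))$ on $[0,1]$, whose number of sign changes is exactly $\CZ(F)$, and to play a lower bound for the mass of $g$ along $\CC$ against an upper bound coming from its oscillation. Normalise $\|F\|_{L^2(\T^2)}=1$ and write $F=\sum_{\mu\in\CE_\lambda}c_\mu\,e^{2\pi i\langle\mu,x\rangle}$, so that $\sum|c_\mu|^2=1$, every $\mu\in\CE_\lambda$ lies on a fixed circle of radius $\asymp\lambda$, and $N=r_2(m)=\lambda^{o(1)}$.

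For the lower bound on the mass I would first combine the restriction inequality \eqref{eqn:restriction}, which gives $\int_\CC|F|^2\,d\gamma\gg1$, with the complementary estimate $\int_\CC|F|^2\,d\gamma\ll1$: expanding into $\sum_{\mu,\nu}c_\mu\bar c_\nu\,\widehat{d\gamma}(\mu-\nu)$, the diagonal contributes $1$ and the off-diagonal Fourier coefficients of arc-length measure are $\ll|\mu-\nu|^{-1/2}$ by stationary phase, the curvature of $\CC$ entering here for the first time, and the resulting loss is absorbed by $N=\lambda^{o(1)}$. Next I would record higher restriction-type estimates $\int_\CC|F|^p\,d\gamma\ll B_\lambda^{c_p}$ for $p\le6$ (with $c_2=0$), obtained by squaring and reducing $\int_\CC|F^{p/2}|^2\,d\gamma$ to bounds on additive energies of $\CE_\lambda$; here one uses that two points of a circle are determined by their sum, so that the low energies are essentially trivial, while the sixth moment is governed by the clustering constant $B_\lambda$. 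Interpolating $\|g\|_{L^2}\le\|g\|_{L^1}^{\theta}\|g\|_{L^6}^{1-\theta}$ then forces $\int_\CC|F|\,d\gamma\gg B_\lambda^{-c}$ for an absolute $c$.

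For the upper bound, set $\chi:=\operatorname{sgn}(g)$, a $\pm1$-valued function on $[0,1]$ with exactly $L:=\CZ(F)$ jumps, so that $\int_\CC|F|\,d\gamma=\langle g,\chi\rangle=\sum_{\mu\in\CE_\lambda}c_\mu\overline{\hat\chi(\mu)}$ with $\hat\chi(\mu):=\int_0^1\chi(t)\,e^{-2\pi i\langle\mu,\gamma(t)\rangle}\,dt$, and hence $\big(\int_\CC|F|\,d\gamma\big)^2\le\sum_{\mu\in\CE_\lambda}|\hat\chi(\mu)|^2$ by Cauchy--Schwarz. Now I would estimate $\hat\chi(\mu)$ by oscillatory-integral methods: the phase $t\mapsto\langle\mu,\gamma(t)\rangle$ has only $O(1)$ critical points, at each of which its second derivative is $\asymp\lambda$ by non-vanishing curvature, so van der Corput's lemma bounds the contribution of a window of width $\asymp\lambda^{-1/2}$ about each critical point by $\lambda^{-1/2}$ times one plus the number of jumps of $\chi$ in that window, while integration by parts handles the non-stationary part. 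Summing over $\mu\in\CE_\lambda$ and using that a fixed jump of $\chi$ lies in the critical window of only $\ll B_\lambda^{O(1)}$ of the frequencies $\mu$ --- once again a lattice-points-on-a-short-arc estimate --- gives $\sum_{\mu\in\CE_\lambda}|\hat\chi(\mu)|^2\ll\lambda^{-1}L\,B_\lambda^{O(1)}$, up to an error negligible since $N=\lambda^{o(1)}$. Comparing the two bounds and tracking the exponents yields $L\gg\lambda/B_\lambda^{5/2}$.

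The principal obstacle is the quantitative bookkeeping in these last two steps together with the arithmetic of $\CE_\lambda$ that drives it: both the sixth-moment restriction estimate and the count of critical windows containing a given jump are clustering statements about lattice points on the circle $|x|=\lambda$, and they have to be executed with exactly the right powers for the two ends of the inequality to meet at $B_\lambda^{5/2}$. A related subtlety is that the jumps of $\operatorname{sgn}(g)$ might a priori accumulate inside one window of width $\lambda^{-1/2}$, so the argument must exploit that $g$ is essentially band-limited to frequencies $\lesssim\lambda$ to keep that local count in check; alternatively one can replace the global $\chi$-argument by a pigeonhole reduction to a sub-arc of $\CC$ carrying a positive proportion of the $L^2$-mass and run the oscillation estimate there.
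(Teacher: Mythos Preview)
This theorem is not proved in the present paper: it is quoted verbatim from Bourgain--Rudnick \cite{BR2}, and the paper uses it only as background leading to Conjecture~\ref{conj:BR}. There is therefore no ``paper's own proof'' to compare your proposal against.

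For what it is worth, your outline is in the spirit of the actual Bourgain--Rudnick argument: a lower bound on $\int_\CC|F|\,d\gamma$ coming from restriction-type $L^p$ estimates (with the $L^6$ bound controlled by $B_\lambda$) combined with an upper bound coming from pairing $g=F\circ\gamma$ against $\chi=\operatorname{sgn}(g)$ and estimating the resulting oscillatory integrals. As you yourself flag, the delicate part is the precise bookkeeping of the $B_\lambda$-exponents in both halves so that they meet at $5/2$, and your sketch does not actually carry this out; in particular the assertion ``$\sum_{\mu}|\hat\chi(\mu)|^2\ll\lambda^{-1}L\,B_\lambda^{O(1)}$'' with an unspecified $O(1)$ is exactly the place where the real work lies. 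If you want to verify your plan, you should consult \cite{BR2} directly rather than this paper.
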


In particularly, as one can show that $B_\lambda\ll \log \lambda$ (see \cite{BR2}), we have
\begin{theorem}
$$\CZ(F)\gg \lambda^{1-o(1)}.$$
\end{theorem}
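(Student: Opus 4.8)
The plan is to deduce the bound $\CZ(F)\gg \lambda^{1-o(1)}$ directly from Theorem~\ref{theorem:BR} together with the arithmetic input $B_\lambda \ll \log \lambda$. Since Theorem~\ref{theorem:BR} gives $\CZ(F) \gg \lambda / B_\lambda^{5/2}$ for every smooth $\CC$ of nowhere vanishing curvature, all that is needed is to insert an upper bound on $B_\lambda$ that grows slower than any power of $\lambda$.

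First I would recall and cite the estimate $B_\lambda \ll \log \lambda$ from \cite{BR2}. The idea behind this estimate is that lattice points on the circle $|x| = \lambda$ correspond to representations of $m = \lambda^2/(4\pi^2)$ as a sum of two squares, equivalently to divisors of $m$ in the Gaussian integers, whose number is $r_2(m) = N \le d(m) = \lambda^{o(1)}$; moreover, among these, the points lying on a short arc of angular width $O(1/\sqrt{\lambda})$ are even more constrained, and a counting argument (going back to work on lattice points on short arcs of circles) shows that at most $O(\log \lambda)$ of them can be so clustered. I would treat this as a black box, quoting \cite{BR2} for the precise statement.

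Next, plugging $B_\lambda \ll \log \lambda$ into Theorem~\ref{theorem:BR} yields
\[
\CZ(F) \gg \frac{\lambda}{B_\lambda^{5/2}} \gg \frac{\lambda}{(\log \lambda)^{5/2}}.
\]
Finally, since $(\log \lambda)^{5/2} = \lambda^{o(1)}$ as $\lambda \to \infty$, we obtain $\CZ(F) \gg \lambda^{1-o(1)}$, which is exactly the claimed bound. The implied constants and the $o(1)$ term depend only on the curve $\CC$ (through Theorem~\ref{theorem:BR}).

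There is essentially no obstacle here beyond invoking the two cited inputs: the content is entirely in Theorem~\ref{theorem:BR} (the hard harmonic-analytic and arithmetic work) and in the lattice-point bound $B_\lambda \ll \log\lambda$ (a number-theoretic fact about sums of two squares on short arcs). The only thing to be careful about is that the $\log \lambda$ bound on $B_\lambda$ is uniform in the eigenvalue, so that the resulting $o(1)$ is genuinely a function of $\lambda$ alone; this is already how the estimate is stated in \cite{BR2}, so the deduction is immediate.
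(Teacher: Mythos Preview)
Your proposal is correct and matches the paper's own argument exactly: the paper simply states that $B_\lambda \ll \log \lambda$ (citing \cite{BR2}) and deduces the theorem immediately from Theorem~\ref{theorem:BR}. Your write-up is, if anything, more detailed than the paper's one-line deduction.
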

According to a conjecture of \cite{CG}, $B_\lambda = O(1)$ uniformly. This is known to hold for almost all $\lambda^2$, see for instance \cite[Lemma 5]{BR}; we also refer the reader to Lemma \ref{lemma:separation:d=2} of Section \ref{section:smallinterval} for a similar result (with a relatively short proof). In view of Theorem \ref{theorem:BR},  the following was conjectured in \cite{BR2}

\begin{conjecture}\label{conj:BR}
If $\CC \subset \T^2$ is smooth with non-zero curvature, then
$$\CZ(F) \gg \lambda .$$
\end{conjecture}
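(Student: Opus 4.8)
We outline a possible route to Conjecture~\ref{conj:BR}, and indicate where the difficulty lies.

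The plan is to combine the $L^2$-restriction estimate \eqref{eqn:restriction} with a quantitative non-concentration statement for $F$ along $\CC$ at the Planck scale $\lambda^{-1}$. Normalise $\int_{\T^2}|F|^2\,dx=1$, so \eqref{eqn:restriction} gives $\int_\CC|F|^2\,d\gamma\gg1$; applying the same restriction bound to the partial derivatives $\partial_1 F,\partial_2 F$, which are eigenfunctions for the same eigenvalue $\lambda^2$, gives the matching upper bound $\int_\CC|\nabla F|^2\,d\gamma\ll\lambda^2$. Setting $g(t):=F(\gamma(t))$ we obtain $\norm{g}_{L^2[0,1]}\asymp1$ and $\norm{g'}_{L^2[0,1]}\ll\lambda$. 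Let $Z:=\CZ(F)$ and let the zeros of $g$ cut $[0,1]$ into intervals $J_1,\dots,J_{Z+1}$; Wirtinger's inequality (in its one-Dirichlet-endpoint form on the two extreme intervals) gives $\norm{g}_{L^2(J_i)}^2\ll|J_i|^2\,\norm{g'}_{L^2(J_i)}^2$, and summing yields $1\ll(\max_i|J_i|)^2\lambda^2$, i.e. at least one gap of length $\gg\lambda^{-1}$. The entire game is to upgrade ``one large gap'' to ``every gap is $\ll\lambda^{-1}$'': once that holds, $\sum_i|J_i|=1$ forces $Z\gg\lambda$. Equivalently, it suffices to show that $F$ changes sign on every sub-arc of $\CC$ of length $C\lambda^{-1}$, with $C$ a sufficiently large absolute constant.

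To attack this one examines $F$ restricted to such a short arc $A$. Each summand $c_\mu e^{2\pi i\langle\mu,\gamma(t)\rangle}$, $\mu\in\CE_\lambda$, is a chirp whose instantaneous frequency $\langle\mu,\gamma'(t)\rangle$ has modulus $\asymp\lambda$ times the sine of the angle between $\mu$ and the normal to $\CC$ at $t$; because the curvature of $\CC$ is nowhere zero this frequency stays $\gg C$ across $A$ — so the chirp genuinely oscillates — unless $\mu$ lies within angle $O(\lambda^{-1/2})$ of the normal direction to $\CC$ over $A$, i.e. on an arc of $\{|x|=\lambda\}$ of length $O(\sqrt\lambda)$. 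Hence only the $\le B_\lambda$ lattice points on such a short arc can fail to oscillate near $A$, and a second-moment comparison of $\int_\CC|F|^2\,d\gamma$ with $\int_\CC|F|^4\,d\gamma$ — the latter controlled through the lattice-point geometry — should forbid a constant sign on any arc of length $\gg B_\lambda^{O(1)}\lambda^{-1}$. Unwinding, the gaps between consecutive zeros of $g$ are then $\ll B_\lambda^{O(1)}\lambda^{-1}$, whence $Z\gg\lambda/B_\lambda^{O(1)}$, which at best recovers Theorem~\ref{theorem:BR}. Combined with the density-one bound $B_\lambda=O(1)$ of \cite{BR}, this already settles Conjecture~\ref{conj:BR} for a set of eigenvalues $\lambda^2$ of density one, so all the content lies in the sparse set of $\lambda$ for which the lattice points on $\{|x|=\lambda\}$ cluster on short arcs.

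That clustering is the main obstacle. For the exceptional $\lambda$ one only knows $B_\lambda\ll\log\lambda$, and the chirp bookkeeping above is too lossy there: several nearly collinear frequencies can reinforce one another so that $F$ keeps a constant sign along a relatively long sub-arc of $\CC$, and controlling the resulting interference seems to demand genuinely new input — either the full Cilleruelo--Granville conjecture $B_\lambda=O(1)$ of \cite{CG}, or an argument that does not proceed arc-by-arc through pointwise sign changes. A complementary route, which yields the correct order only on average, is integral-geometric: averaging over the translates $\CC+v$, $v\in\T^2$, the coarea formula expresses $\int_{\T^2}\CZ(F,\CC+v)\,dv$ as the integral, over arc-length on $N_F$ and on $\CC$, of the sine of the angle between the tangent lines; since $\CC$ has nowhere-vanishing curvature this is $\asymp\operatorname{length}(N_F)$, and $\operatorname{length}(N_F)\asymp\lambda$ by the Donnelly--Fefferman bounds for real-analytic manifolds; since $\CZ(F,\CC+v)\le c\lambda$ uniformly by Theorem~\ref{theorem:det:upper}, a Markov-type argument then forces $\CZ(F,\CC+v)\gg\lambda$ for a positive-measure set of $v$. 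The residual — and hardest — step is to remove the averaging, i.e. to show the prescribed curve is never atypically poor; this is precisely the uniformity, over $F$ and over curves, that makes the conjecture deep, and it runs into the same arithmetic rigidity as the first approach.
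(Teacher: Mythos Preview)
The statement you are attempting to prove is \emph{Conjecture}~\ref{conj:BR}; the paper does not prove it and presents it as open. There is therefore no ``paper's own proof'' to compare your attempt against. The paper's contribution in this direction is probabilistic: it shows (via Theorems~\ref{theorem:gaussian:d=2:expectation} and~\ref{theorem:general}) that the bound $\CZ(F)\gg\lambda$ holds asymptotically almost surely when $F$ is drawn from the arithmetic random wave model or from more general random ensembles, for almost all eigenvalues. No deterministic proof of the conjecture for arbitrary eigenfunctions $F$ is given or claimed.

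Your write-up is, by your own admission, not a proof either: it is a heuristic outline of two plausible strategies (a Wirtinger/gap argument combined with short-arc oscillation analysis, and an integral-geometric averaging argument), each of which you correctly identify as stalling at the same arithmetic obstruction---the possible clustering of lattice points on short arcs of the circle $|x|=\lambda$, quantified by $B_\lambda$. The first approach, as you note, recovers at best Theorem~\ref{theorem:BR} (the Bourgain--Rudnick bound $\CZ(F)\gg\lambda/B_\lambda^{O(1)}$), and the second yields only an averaged statement over translates of $\CC$. Neither closes the gap to the full conjecture, and you say so explicitly. So what you have written is a reasonable survey of the landscape around the conjecture, not a proof proposal in the usual sense; the ``genuine gap'' is simply that the conjecture remains open, and your discussion accurately reflects why.
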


\subsection{Arithmetic random wave model} We next introduce a probabilistic setting first studied by Rudnick and Wigman \cite{RW}. Consider the random gaussian function
$$F(t) =\frac{1}{\sqrt N} \sum_{\mu\in \CE_\lambda} \eps_\mu e^{2\pi i \langle \mu, \gamma(t)\rangle },$$
 where $\eps_\mu$ are iid complex standard gaussian  with a saving
 $$\eps_{-\mu} = \bar{\eps}_\mu.$$
The random function $F$ is called {\it arithmetic random wave} \cite{Berry,KKW}, whose distribution is invariant under rotation by the gaussian property of the coefficients.

We now introduce the main result of \cite{RW}.

\begin{theorem}\label{theorem:gaussian:d=2:expectation}
Let $\CC \subset \T^2$ be a smooth curve on the torus, with nowhere vanishing curvature and of total length one. Then
\begin{enumerate}
\item the expected number of nodal intersections is precisely
$$\E_{\Bg} \CZ = \sqrt{2m},$$
\item the variance is bounded
$$\Var_{\Bg}(\CZ) \ll \frac{m}{N}.$$
\item Furthermore, let $\{m\}$ be a sequence such that $N_m\to \infty$ and $\{\hat{\tau}_m(4)\}$ do not accumulate at $\pm 1$, then
$$\Var_\Bg(\CZ) =  \frac{m}{N}\int_\CC \int_\CC4\left ( \frac{1}{N}  \left \langle \frac{\mu}{|\mu|}, \dot{\gamma}(t_1) \right \rangle^2  \left \langle  \frac{\mu}{|\mu|}, \dot{\gamma}(t_2) \right \rangle^2 -1\right )dt_1dt_2 + O(\frac{m}{N^{3/2}}).$$
\end{enumerate}
\end{theorem}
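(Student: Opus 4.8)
The plan is to reduce all three statements to a Kac--Rice type computation for the Gaussian random field $F$ restricted to the curve $\CC$. First I would record the covariance structure of $F(t) = \frac{1}{\sqrt N}\sum_{\mu\in\CE_\lambda}\eps_\mu e^{2\pi i\langle\mu,\gamma(t)\rangle}$ along $\gamma$: writing $r(t_1,t_2) = \frac{1}{N}\sum_{\mu\in\CE_\lambda} e^{2\pi i\langle\mu,\gamma(t_1)-\gamma(t_2)\rangle}$ for the correlation kernel, the reality constraint $\eps_{-\mu}=\bar\eps_\mu$ makes $F$ a real stationary-in-law field with $\E F(t)^2 = 1$ and $\E F'(t)^2 = \frac{4\pi^2}{N}\sum_\mu\langle\mu,\dot\gamma(t)\rangle^2$. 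For part (1), the Kac--Rice formula gives $\E_\Bg\CZ = \int_0^1 \E\big[\,|F'(t)|\ \big|\ F(t)=0\,\big]\,p_{F(t)}(0)\,dt$; since $(F(t),F'(t))$ is a mean-zero Gaussian vector, $p_{F(t)}(0)=\frac{1}{\sqrt{2\pi}}$ and the conditional expectation is $\sqrt{2/\pi}\,(\E F'(t)^2)^{1/2}$ after subtracting the (here nonexistent, by stationarity) regression term, so the integrand becomes $\frac{1}{\pi}\big(\E F'(t)^2\big)^{1/2}$. The key arithmetic input is that $\sum_{\mu\in\CE_\lambda}\langle\mu,\dot\gamma(t)\rangle^2 = \frac{m}{2}|\dot\gamma(t)|^2 = \frac{m}{2}$ by the rotational symmetry of $\CE_\lambda$ (the empirical measure of $\mu/|\mu|$ has vanishing first moment and the second-moment matrix is $\frac{1}{2}I$, since $\sum_\mu \mu\mu^T = \frac{m}{2}N\,I$). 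Hence $\E F'(t)^2 = 2\pi^2 m$ pointwise, and integrating $\frac{1}{\pi}\sqrt{2\pi^2 m}=\sqrt{2m}$ over $[0,1]$ gives exactly $\sqrt{2m}$.

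For parts (2) and (3) I would use the second-moment Kac--Rice formula: $\E_\Bg\CZ(\CZ-1) = \int_0^1\!\!\int_0^1 K_2(t_1,t_2)\,dt_1\,dt_2$, where $K_2$ is the expected product of $|F'|$ at $t_1,t_2$ conditioned on $F(t_1)=F(t_2)=0$, times the joint density of $(F(t_1),F(t_2))$ at $(0,0)$. Then $\Var_\Bg(\CZ) = \E_\Bg\CZ(\CZ-1) + \E_\Bg\CZ - (\E_\Bg\CZ)^2$. The heart of the matter is to expand $K_2(t_1,t_2)$ in the small parameter governing the decay of the off-diagonal correlations. When $t_1,t_2$ are well-separated the four-dimensional Gaussian vector $(F(t_1),F(t_2),F'(t_1),F'(t_2))$ is close to a product of two independent copies, and a Taylor expansion of the Kac--Rice density in the entries of the covariance matrix that couple the two points produces the main term $2m\cdot\frac{1}{N}\int\!\!\int 4\big(\frac{1}{N}\langle\frac{\mu}{|\mu|},\dot\gamma(t_1)\rangle^2\langle\frac{\mu}{|\mu|},\dot\gamma(t_2)\rangle^2 - 1\big)$ — with the ``$-1$'' coming from the $-(\E_\Bg\CZ)^2$ correction and the cross term arising from the second-order coefficient in the expansion, which is exactly the squared two-point function of the directional derivatives. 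The diagonal contribution near $t_1 = t_2$ is handled separately using the smoothness of $\gamma$ and absorbed into the error.

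The main obstacle is controlling the error term $O(m/N^{3/2})$ uniformly: one must (i) show that the Gaussian density and its derivatives in the Kac--Rice integrand are bounded away from the singular locus, which requires a quantitative lower bound on the determinant of the $2\times 2$ covariance matrix $\begin{pmatrix} 1 & r(t_1,t_2)\\ r(t_1,t_2) & 1\end{pmatrix}$, and more delicately on the $4\times 4$ matrix — this is where the hypothesis that $\hat\tau_m(4)$ stays away from $\pm 1$ enters, preventing degeneracy of the conditional covariance of $(F'(t_1),F'(t_2))$; (ii) bound the relevant exponential sums $\frac{1}{N}\sum_\mu e^{2\pi i\langle\mu,\gamma(t_1)-\gamma(t_2)\rangle}$ and their derivatives in $L^2(dt_1\,dt_2)$ by $O(N^{-1/2})$, which follows from the nowhere-vanishing curvature of $\CC$ via stationary phase / the classical bound on the number of lattice points in short arcs (so that off-diagonal correlations are genuinely small on average); and (iii) control the contribution of the small-separation region $|t_1 - t_2|\lesssim \lambda^{-1}$ where the expansion breaks down, using the deterministic bound $\CZ \ll \lambda$ from Theorem \ref{theorem:det:upper} together with a local density estimate. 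Steps (i) and (ii) are the genuinely technical parts; everything else is bookkeeping around the Kac--Rice formula.
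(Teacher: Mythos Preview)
Your proposal follows essentially the same Kac--Rice route that the paper sketches (the theorem is quoted from \cite{RW}, and the paper only outlines the argument). Part~(1) is handled identically: compute the $2\times 2$ covariance of $(F(t),F'(t))$, observe it is diagonal with $r_{12}(t,t)=2\pi^2 m$, and read off $K_1(t)=\sqrt{2m}$. For parts~(2)--(3) you correctly identify the second-moment Kac--Rice integral, the Taylor expansion of $K_2$ in the off-diagonal covariance entries, and the need to excise the near-diagonal region.

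Two small corrections and one point of emphasis. First, your displayed identity $\sum_{\mu\in\CE_\lambda}\langle\mu,\dot\gamma(t)\rangle^2=\tfrac{m}{2}$ is off by a factor of $N$; it should be $\tfrac{mN}{2}$ (your subsequent value $\E F'(t)^2=2\pi^2 m$ is nonetheless correct). Second, the justification is not ``rotational symmetry of $\CE_\lambda$'' --- the set of lattice points on the circle has only the eight-fold dihedral symmetry --- but that symmetry is already enough to force $\sum_\mu \mu\mu^T=\tfrac{mN}{2}I$. Finally, the paper singles out as the \emph{main} technical obstacle the fact that the $4\times 4$ covariance matrix $\Sigma(t_1,t_2)$ can degenerate on $[0,1]^2$, and that the remedy in \cite{RW} is to partition $[0,1]$ into subintervals of length $\sim m^{-1/2}$ and verify Kac--Rice cell-by-cell on most pairs $I_i\times I_j$; your step~(iii) gestures at this, but you should make the $m^{-1/2}$-scale decomposition explicit rather than treating it as an afterthought to the off-diagonal expansion.
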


Here the subscript $\Bg$ is used to emphasize standard gaussian randomness, and $\tau_m$ is the probability measure on the unit circle $S^1 \subset \R^2$ associated to $\CE_\lambda$,
$$\tau_{m} = \frac{1}{N} \sum_{\mu\in \CE} \delta_{\mu/\sqrt{m}}.$$ 

A simple  consequence of (1) and (2) is that Conjecture
\eqref{conj:BR} holds for the random wave $F$ asymptotically almost
surely. In fact, the statement of (2) and (3) show that the variance
is much smaller than $m$, indicating a large number of cancellations
in the formula of the variance.

\subsection{Partial results in $\T^3$} Bourgain and Rudnick
\cite{BR0,BR1,BR2} also considered the intersection $\CZ$ between $N$
and a smooth {\it hypersurface} $\sigma$ for general $\T^d$.  For
$\T^3$, they obtained an analog  of Theorem \ref{theorem:det:upper}
for the $L^2$ restriction over $\CZ$. 
However, we are not aware of similar
deterministic results regarding the intersection with a smooth curve as
in $\T^2$. On the probabilistic side, Rudnick, Wigman and Yesha
\cite{RWY} have recently extended Theorem \ref{theorem:gaussian:d=2:expectation} to $\T^3$. Here, for $\lambda^2=4\pi^2 m$ with $m \neq 0,4,7 \mod 8$, let $\CE_\lambda$ be the collection of $\mu=(\mu_1,\mu_2,\mu_3) \in \Z^3$ such that $\mu_1^2 + \mu_2^2 + \mu_3^2= m$. Again denote $N=N_m= \# \CE_\lambda$. 

Consider the random gaussian function
$$F(t) =\frac{1}{\sqrt N} \sum_{\mu\in \CE_\lambda} \eps_\mu e^{2\pi i \langle \mu, \gamma(t)\rangle },$$
where $\eps_\mu$ are iid complex standard gaussian again with the saving 
$$\eps_{-\mu} = \bar{\eps}_\mu.$$ 
Rudnick, Wigman and Yesha showed the following result.
\begin{theorem}\label{theorem:gaussian:d=3}
Let $\CC \subset \T^3$ be a smooth curve on the torus of total length one with nowhere zero curvature. Assume further that either $\CC$ has nowhere-vanishing torsion or $\CC$ is planar. Then
\begin{itemize}
\item The expected number of nodal intersections is precisely
$$\E_{\Bg} \CZ = \frac{2}{\sqrt{3}}\sqrt{m} .$$
\item There exists $c>0$ such that
$$\Var_{\Bg}(\CZ) \ll \frac{m}{N^c}.$$
\end{itemize}
\end{theorem}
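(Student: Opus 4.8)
The plan is to extract both assertions from the Kac--Rice formula applied to the real Gaussian process $f(t):=F(\gamma(t))$ on $[0,1]$ (real because $\eps_{-\mu}=\bar\eps_\mu$), following the scheme of \cite{RW} for $\T^2$. First I would record, using the independence of the $\eps_\mu$, the covariance
$$r(t_1,t_2):=\E_\Bg[f(t_1)f(t_2)]=\frac1N\sum_{\mu\in\CE_\lambda}\cos\bigl(2\pi\langle\mu,\gamma(t_1)-\gamma(t_2)\rangle\bigr)=\int_{S^2}e^{2\pi i\sqrt m\langle\theta,\gamma(t_1)-\gamma(t_2)\rangle}\,d\tau_m(\theta),$$
where $\tau_m=\frac1N\sum_\mu\delta_{\mu/\sqrt m}$ is the empirical measure on $S^2$, together with the mixed derivatives $\partial_{t_1}r,\partial_{t_2}r,\partial_{t_1}\partial_{t_2}r$, which are governed by the second and fourth moments of $\tau_m$. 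Since $\CE_\lambda=\{\mu\in\Z^3:\mu_1^2+\mu_2^2+\mu_3^2=m\}$ is invariant under coordinate permutations and sign changes, $\frac1N\sum_{\mu}\mu\mu^{\top}=\frac m3 I$ \emph{exactly}, so $f(t)$ and $f'(t)$ are independent with $\Var f(t)=1$ and $\Var f'(t)=\tfrac{(2\pi)^2 m}{3}$, both constant in $t$. The one-point Kac--Rice formula $\E_\Bg\CZ=\int_0^1\E_\Bg|f'(t)|\,p_{f(t)}(0)\,dt$ then gives $\E_\Bg\CZ=\tfrac1\pi\cdot 2\pi\sqrt{m/3}=\tfrac2{\sqrt3}\sqrt m$ with no error term, settling the first part.

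For the variance I would use $\Var_\Bg(\CZ)=\E_\Bg[\CZ(\CZ-1)]-(\E_\Bg\CZ)^2+\E_\Bg\CZ$ and the two-point Kac--Rice formula $\E_\Bg[\CZ(\CZ-1)]=\int_0^1\!\int_0^1 K_2(t_1,t_2)\,dt_1\,dt_2$ with
$$K_2(t_1,t_2)=p_{(f(t_1),f(t_2))}(0,0)\cdot\E_\Bg\bigl[\,|f'(t_1)|\,|f'(t_2)|\ \big|\ f(t_1)=f(t_2)=0\bigr].$$
Its validity requires the $4\times4$ Gaussian covariance matrix of $(f(t_1),f(t_2),f'(t_1),f'(t_2))$ to be nonsingular for $t_1\ne t_2$; this is where nowhere-vanishing curvature together with nonvanishing torsion (or planarity) enters, preventing the exponentials $e^{2\pi i\langle\mu,\gamma(\cdot)\rangle}$ from producing a linear degeneracy along $\CC$. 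Writing $\rho_1(t)=\E_\Bg|f'(t)|\,p_{f(t)}(0)$ for the (constant) one-point intensity, so that $(\E_\Bg\CZ)^2=\int_0^1\!\int_0^1\rho_1(t_1)\rho_1(t_2)\,dt_1dt_2$, everything reduces to showing $\int_0^1\!\int_0^1\bigl(K_2(t_1,t_2)-\rho_1(t_1)\rho_1(t_2)\bigr)\,dt_1dt_2\ll m/N^c$.

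I would split this integral at $|t_1-t_2|\asymp m^{-1/2+\delta}$. On the near-diagonal part, local repulsion gives $K_2(t_1,t_2)\ll m^{3/2}|t_1-t_2|$ for $|t_1-t_2|\ll m^{-1/2}$ and $K_2\ll m$ beyond, while $\rho_1\rho_1\ll m$, so integrating over a set of measure $\ll m^{-1/2+\delta}$ contributes $\ll m^{1/2+\delta}$, which is within the target since $N\ll m^{1/2+o(1)}$. On the off-diagonal part the four normalized cross-covariances built from $r,\partial_{t_1}r,\partial_{t_2}r,\partial_{t_1}\partial_{t_2}r$ are small, and a Taylor expansion of the conditional expectation and of $p_{(f(t_1),f(t_2))}(0,0)$ about their decoupled values, followed by integration, bounds $\int\!\int(K_2-\rho_1\rho_1)$ by quantities of the shape $m\int\!\int|r(t_1,t_2)|^2\,dt_1dt_2$ and its derivative analogues; expanding the square and separating the $t$-integrals reduces this to $m\int_{S^2}\!\int_{S^2}\bigl|\int_0^1 e^{2\pi i\sqrt m\langle\theta-\theta',\gamma(t)\rangle}\,dt\bigr|^2\,d\tau_m(\theta)\,d\tau_m(\theta')$, whose inner integral, outside a thin set of exceptional directions, the curvature and torsion/planarity of $\CC$ bound by $(\sqrt m\,|\theta-\theta'|)^{-\alpha}$ for a fixed $\alpha>0$ via van der Corput.

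The principal obstacle, and the place the deep arithmetic input is needed, is to show that the resulting integral over $S^2\times S^2$ against $\tau_m\times\tau_m$ is $\ll N^{-c}$: the diagonal $\theta=\theta'$ already contributes $\asymp 1/N$, and the contribution of close lattice points is of the same order, so one needs the remaining mass of $\tau_m$ to be controlled by effective equidistribution of $\tau_m$ on $S^2$ with a power-saving rate together with a bound excluding severe clustering of lattice points on $|x|^2=m$. This is exactly Duke's theorem on equidistribution of lattice points on spheres, proved via Iwaniec's and Duke's power-saving bounds on Fourier coefficients of half-integral weight cusp forms (Kloosterman-type cancellation), applied along the admissible residues $m\neq 0,4,7\mod 8$, which also guarantee $N=N_m\to\infty$. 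A secondary technical point is making the Kac--Rice nondegeneracy, and hence the legitimacy of the Taylor expansion, uniform over the off-diagonal region --- once more a consequence of the curvature and torsion/planarity hypotheses.
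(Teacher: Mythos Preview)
This theorem is not proved in the paper at all: it is quoted from Rudnick--Wigman--Yesha \cite{RWY}. The paper only sketches the expectation computation (the paragraph around \eqref{eqn:gaussian:expectation:d}), and for the variance it merely records in one sentence the strategy of \cite{RW,RWY}: partition $[0,1]$ into intervals $I_i$ of length $\asymp 1/\sqrt m$ and verify that the two-point Kac--Rice formula is valid locally on most cells $I_i\times I_j$, referring to \cite{RW,RWY} for details.

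Your expectation argument is exactly the one the paper sketches, and your variance outline is a faithful synopsis of the method of \cite{RWY}; in particular you correctly single out Duke's equidistribution of lattice points on spheres as the arithmetic input that controls the off-diagonal $L^2$ mass of $r$ and its derivatives. One point worth flagging against the paper's description: you assert that the $4\times4$ covariance $\Sigma(t_1,t_2)$ is nonsingular for all $t_1\ne t_2$ by curvature and torsion/planarity, and then handle only the diagonal via local repulsion. The paper explicitly says that $\Sigma(t_1,t_2)$ is \emph{not} always nonsingular on $[0,1]^2$ and that this is the reason \cite{RW,RWY} resort to the cell decomposition. In practice your near-diagonal/off-diagonal split and their cell decomposition are two packagings of the same difficulty, but the curvature/torsion (or planarity) hypotheses in \cite{RWY} are used chiefly to get van der Corput decay of the oscillatory integrals $\int_0^1 e^{2\pi i\langle\mu-\mu',\gamma(t)\rangle}\,dt$---your second-moment bound on $r$---rather than to guarantee global nondegeneracy of $\Sigma$; establishing the uniformity of the Taylor expansion of $K_2$ over the off-diagonal region, and the legitimacy of Kac--Rice there, is part of what makes the full argument in \cite{RWY} technically heavy.
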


The proof of Theorem \ref{theorem:gaussian:d=2:expectation} and Theorem \ref{theorem:gaussian:d=3} are based on Kac-Rice formula. Let us sketch the computation of expectation for $d\ge 2$ that 
\begin{equation}\label{eqn:gaussian:expectation:d}
\E_{\Bg} \CZ = \frac{2}{\sqrt{d}} \sqrt{m}.
\end{equation}
We follow the proof of \cite[Lemma 2.3]{RWY}. Let $r(t_1,t_2) = \E (F(t_1) F(t_2))$. Denote $K_1(t)$ be the gaussian expectation (first intensity)
$$K_1(t) := \frac{1}{\sqrt{2\pi}} \E (|F'(t)| \big| F(t)=0).$$
By the Kac-Rice formula
$$\E \CZ = \int_{0}^1 K_1(t)dt.$$
Let $\Gamma$ be the covariance matrix of $(F(t),F'(t))$,
$$\Gamma(t) = \left ( \begin{matrix} r(t,t) &  r_1(t,t) \\ r_2(t,t) &
  r_{12}(t,t) \end{matrix} \right ),$$
where $r_1=\partial r/\partial t_1, r_2=\partial r/\partial t_2,
r_{12} =\partial^2 r/\partial t_1 \partial t_2$. 
It is not hard to show that  $\Gamma(t) = \left (\begin{matrix} 1 & 0 \\  0 & \alpha \end{matrix}\right )$, where $\alpha = r_{12}(t,t) = \frac{4}{d} \pi^2 m$. It thus follows 
$$K_1(t) = \frac{1}{\pi} \sqrt{\alpha} = \frac{2}{\sqrt{d}} \sqrt{m}.$$
For the variance, denote $K_2(t)$ to be
$$K_2(t) := \phi_{t_1,t_2}(0,0) \E \Big(|F'(t_1) F'(t_1)| \big| F(t_1)=0,F(t_2)=0\Big),$$
where $\phi_{t_1,t_2}$ is the density function of the random gaussian vector $(F(t_1),F(t_2))$. It is known that if the covariance matrix $\Sigma(t_1,t_2)$ of the
vectors $(F(t_1), F(t_2), F'(t_1), F'(t_2))$ is non-singular for all
$(t_1,t_2) \in A \times B$, then
$$\E (\CZ \restriction_A \CZ\restriction_B)-\E (\CZ\restriction_A) \E (\CZ\restriction_B)  = \int_{A \times B} K_2(t_1,t_2) dt_1 dt_2.$$

The main problem here is that the matrix $\Sigma(t_1,t_2)$ is not
always non-singular in $[0,1]^2$. Roughly speaking, to overcome this
highly technical obstacle, Rudnick and Wigman \cite{RW} and Rudnick,
Wigman and Yesha \cite{RWY} divide $[0,1]$ into subintervals $I_i$ of
length of order $1/\sqrt{m}$ each, and then show that Kac-Rice's
formula is available locally on most of the cells $I_i \times I_j$. We
refer the reader to \cite{RW, RWY} for more detailed treatment of
these issues.

\subsection{More general random waves and our main results}\label{subsection:results} Motivated by Conjecture \ref{conj:BR}, and by the universality phenomenon in probability, we are interested in the behavior of $\CZ(F)$ for other random eigenfunctions $F$ beside the gaussian arithmetic random waves as above. More specifically, consider the random function
\begin{equation}\label{eqn:F}F(t) = \frac{1}{\sqrt{N}}\sum_{\mu\in \CE_\lambda} \eps_\mu e^{2\pi i \langle \mu,
  \gamma(t)\rangle},
\end{equation}
where $\eps_\mu =\eps_{1,\mu} + i \eps_{2,\mu}$, where
$\eps_{1,\mu},\eps_{2,\mu}, \mu \in \CE_\lambda$ are iid random variables with the saving constraint $\eps_{-\mu}
= \bar{\eps}_{\mu}$ so that $F(t)$ is real-valued as in the gaussian case.

We denote by $\P_{\ep_\mu}, \E_{\ep_\mu}$, and $\Var_{\ep_\mu}$ the probability, expectation, and variance with respect to the random variables $(\ep_\mu)_{\mu\in \CE_{\lambda}}$.

We are interested in the following problem.

\begin{question}\label{conj:expectation}  Are the
 statistics such as $\E_{\ep_\mu} \CZ(F)$ and $\Var_{\ep_\mu}(\CZ(F))$ with respect to the randomness of the random variables $\ep_\mu$ universal? 
\end{question}
Note that we can write $F(t)$ as
 \begin{equation}
 F(t) = \frac{1}{\sqrt{N}}\sum_{\mu\in \CE_\lambda} \eps_\mu e^{2\pi i \langle \mu, \gamma(t)\rangle} = \frac{1}{\sqrt{N}}\sum_{\mu\in \CE_\lambda} \eps_{1,\mu} \cos (2\pi \langle \mu, \gamma(t)\rangle) +  \eps_{2,\mu} \sin (2\pi \langle \mu, \gamma(t)\rangle). \label{def-F}
 \end{equation}

We now restrict to $\T^2$ by assuming several necessary properties
of the curves and distributions.

{\bf Assumption on the reference curve.} Let $\gamma(t), t\in \in [0,1]$ be a curve of unit length. 
\begin{condition} \label{cond-curve} We will suppose the following
\begin{enumerate}[(i)]

\item\label{cond-ana} (Analyticity) The function $\gamma(t)$ extends analytically to $t\in [0,
  1]\times [-\ep, \ep]$ for some small constant $\ep$.
  
\item (Non-vanishing curvature)\label{cond-nondegeneracy} The curve $\gamma(t): [0,1] \to \T^2$  has arc-length parametrization with positive curvature. More specifically, there exists a positive constant $c$
such that $\|\gamma'(t) \|=1$ and $\|\gamma''(t)\| >c$ for all
$t$. 
\end{enumerate}
\end{condition}

We remark that these conditions imply that for any constant $c_0>0$, there exists a constant
  $\alpha>0$ such that for any interval $I \subset [0,1]$ of length
  $c_0/\lambda$, the segment $\{\gamma(t), t\in I\}$ cannot be contained in a ball
  of radius $N^{-\alpha}/\lambda$.

{\bf Assumption on the distribution.} We will assume $\ep_\mu$ to have mean zero, variance one with the following properties.
\begin{condition}\label{cond-variable} There is a fixed number $K$ such that either

\begin{enumerate}[(i)]
\item (Continuous distribution)\label{cond-cont} $\ep_\mu$ is absolutely continuous with density function $p$ bounded $\|p\|_\infty \le K$.
\vskip .1in
\item (Mixed distribution) \label{cond-discrete} There exist positive constants $c_1,c_2,c_3$ such that $\P(c_1\le |\ep_\mu - \ep_\mu'|\le c_2)\ge c_3$ where $\ep_\mu'$ is an independent copy of $\eps_\mu$ and one of the following holds
\vskip .05in
\begin{itemize}
\item  either $|\ep_\mu|> 1/K$ with probability one 
\vskip .05in
\item or $\ep_\mu 1_{|\ep_\mu|\le 1/K}$ is continuous with density bounded above by $K$.  
\end{itemize}
\end{enumerate}

\end{condition}

The assumption that $\ep_\mu$ stays away from zero (for discrete distribution) is necessary because
otherwise the random function $F(t)$ might be vanishing with positive
probability. One representative example of our consideration is
Bernoulli random variable which takes
values $\pm 1$ with probability 1/2.  We now state our main result for $\T^2$.

\begin{theorem}[general distributions in
  $\T^2$]\label{theorem:general} With $\gamma$ as above, assume that $\eps_{1,\mu},
 \eps_{2,\mu}, \mu \in \CE_\lambda$ are iid random variables satisfying Condition \ref{cond-variable}. Then  for almost all $m$ we have

%
%
\begin{itemize}
\item $\E_{\ep_\mu} \CZ = \E_{\Bg} \CZ + O(\lambda/N^{c})$;
\vskip .05in
\item more generally, for any fixed $k$, $\E_{\ep_\mu} \CZ^k = \E_{\Bg} \CZ^k + O(\lambda^k/N^{c})$,
\end{itemize}
where the subscript ${\Bg}$ stands for the distribution in which the $\ep_{1, \mu}$ and $\ep_{2, \mu}$ are independent standard gaussian.
Here the implicit constants depend on the curve $\gamma$ and $k$ but not on $N$ and $\lambda$. In particularly, with $\gamma$ and $\lambda$ as in Theorem \ref{theorem:gaussian:d=2:expectation} 
$$\E_{\ep_\mu} \CZ =  \sqrt{2m} + O(\lambda/N^{c}) \mbox{ and } \Var_{\ep_\mu}(\CZ) \ll  \frac{\lambda^2}{N^c}.$$
\end{theorem}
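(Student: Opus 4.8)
The plan is to run a Lindeberg-type replacement (swapping) argument to move from general $\eps_\mu$ to Gaussian $\eps_\mu$, combined with a smoothing of the indicator/counting functional that defines $\CZ$. First I would reduce to almost-all $m$: by the separation result (cf.\ Lemma \ref{lemma:separation:d=2}) we may assume the lattice points of $\CE_\lambda$ are well-separated on the circle $|x|=\sqrt m$, equivalently $B_\lambda = O(1)$, which is what guarantees the covariance matrix $\Sigma(t_1,\dots)$ of the relevant Gaussian vectors is non-degenerate on most cells and gives the anti-concentration needed below. Then I would partition $[0,1]$ into $\asymp \lambda$ arcs $I_i$ of length $\asymp 1/\lambda$; on each arc $\CZ\restriction_{I_i}$ takes values in a bounded set and $\CZ=\sum_i \CZ\restriction_{I_i}$, so for $\E\CZ^k$ it suffices to control mixed moments $\E\prod_{j=1}^k \CZ\restriction_{I_{i_j}}$ uniformly, with the total number of $k$-tuples being $O(\lambda^k)$ — this is where the $\lambda^k/N^c$ shape comes from.

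The core step is: on a fixed $k$-tuple of arcs, express the indicator that $F$ has a sign change (or, on short arcs, that $F$ has a zero) as a function of the finitely many linear forms $L_{s}(\bfeps) = F(t_s)$ and $F'(t_s)$ at suitably chosen sample points $t_s$ in those arcs, approximate the counting functional by a smooth function $\Psi$ of these $O(k)$ real Gaussian-like sums (using Condition \ref{cond-curve}\eqref{cond-nondegeneracy} to ensure that on an arc of length $\asymp 1/\lambda$ the function $F$ is well-approximated by an affine function of $t$ after the natural rescaling, so a zero is detected by finitely many samples), and then apply the Lindeberg swap: replace the $\eps_\mu$ one coordinate at a time by Gaussians, using Taylor expansion of $\Psi$ to third order. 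The zeroth, first and second order terms match because $\eps_\mu$ and its Gaussian counterpart share mean $0$ and variance $1$ (and the real/imaginary parts are independent with the $\eps_{-\mu}=\bar\eps_\mu$ saving handled by pairing $\mu$ with $-\mu$); the third-order error per swap is $O(\|\Psi'''\|_\infty \cdot N^{-3/2})$ after the $1/\sqrt N$ normalization in \eqref{eqn:F}, and summing over the $N$ swaps and over a smoothing scale that is polynomially small in $N$ yields an overall error $O(1/N^c)$ per cell, hence $O(\lambda^k/N^c)$ after summing over cells. The anti-concentration input — that $F(t_s)$ does not concentrate near $0$ at scale $N^{-\alpha}$, which is needed to bound the cost of smoothing the indicator — is supplied by Condition \ref{cond-variable} (the continuous density bound, or the mixed condition forcing $|\eps_\mu-\eps_\mu'|$ to be bounded away from $0$ with positive probability, giving a small-ball estimate via a standard Esseen/Lévy concentration-function argument) together with the geometric non-degeneracy remark after Condition \ref{cond-curve} (a segment of length $c_0/\lambda$ is not contained in a ball of radius $N^{-\alpha}/\lambda$), which ensures the linear forms $L_s$ have a genuinely spread-out coefficient vector.

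Once $\E_{\eps_\mu}\CZ^k = \E_{\Bg}\CZ^k + O(\lambda^k/N^c)$ is established for every fixed $k$, the last displayed assertions are immediate: $k=1$ gives $\E_{\eps_\mu}\CZ = \E_{\Bg}\CZ + O(\lambda/N^c) = \sqrt{2m} + O(\lambda/N^c)$ by Theorem \ref{theorem:gaussian:d=2:expectation}(1), and combining $k=1,2$ gives
$$\Var_{\eps_\mu}(\CZ) = \E_{\eps_\mu}\CZ^2 - (\E_{\eps_\mu}\CZ)^2 = \Var_{\Bg}(\CZ) + O(\lambda^2/N^c) \ll \frac{\lambda^2}{N^c},$$
using Theorem \ref{theorem:gaussian:d=2:expectation}(2) and $m \asymp \lambda^2$. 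The main obstacle is the first bullet of the core step: controlling the degeneracy of $\Sigma(t_1,t_2)$ and carrying out the Kac–Rice-type local analysis on the bad cells $I_i\times I_j$ where $t_1,t_2$ are close, exactly as in \cite{RW,RWY} — here one must show the smoothed functional $\Psi$ still has controlled high derivatives and that the contribution of the bad cells is negligible after summing; handling this uniformly over the non-Gaussian distributions in Condition \ref{cond-variable}, rather than just in the Gaussian case, is the delicate point, and it is where the almost-all-$m$ hypothesis (via $B_\lambda = O(1)$) is genuinely used.
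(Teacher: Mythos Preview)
Your high-level architecture --- partition, smooth the counting functional, Lindeberg swap --- matches the paper's, and the deduction of the expectation and variance statements from the moment comparison is correct. But there is a genuine gap in the anti-concentration step, and you have misidentified both the main obstacle and the way the ``almost all $m$'' hypothesis enters.

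\textbf{The anti-concentration gap.} You write that the small-ball bound for $F(t_s)$ ``is supplied by Condition~\ref{cond-variable} \dots\ via a standard Esseen/L\'evy concentration-function argument''. For the continuous case~\eqref{cond-cont} this is indeed trivial (Fact~\ref{fact:anti-concentration}), but for the mixed/discrete case~\eqref{cond-discrete} (e.g.\ Bernoulli) it is the heart of the matter and is \emph{not} standard. An Esseen-type bound gives at best $\P(|F(t)|\le\beta)=O(N^{-1/2})$, whereas the swapping argument needs $\P(|F(t)|\le N^{-C})\le N^{-A}$ for \emph{every} constant $A$. The paper obtains this (Theorem~\ref{theorem:smallball}) by combining (i) an inverse Littlewood--Offord theorem (Theorem~\ref{theorem:ILO}), which says that if the small-ball probability is large then the coefficients $e^{2\pi i\langle\mu,\gamma(t)\rangle}$ lie close to a low-rank GAP; (ii) a sum-product result on the unit circle (Theorem~\ref{theorem:circle}, proved via an effective Nullstellensatz) ruling out large GAP-structured subsets of the circle; and (iii) the equidistribution Assumption~\ref{assumption:equi} on the angles $\{\langle\mu,r\rangle\}$, which supplies the contradiction. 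None of this machinery appears in your sketch.

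\textbf{Where ``almost all $m$'' is actually used.} It is \emph{not} used to make the Kac--Rice covariance $\Sigma(t_1,t_2)$ nondegenerate --- the universality proof does not go through Kac--Rice at all. It enters in two places: first, to verify Assumption~\ref{assumption:equi} via the Erd\H{o}s--Hall angular-equidistribution theorem (Section~\ref{section:equi}); second, via the separation Lemma~\ref{lemma:separation:d=2}, which feeds into the deterministic zero-count bound of Theorem~\ref{theorem:deterministic:d=2} used to verify Condition~\eqref{tailex} (control of $\E\CZ^k\mathbf 1_{\mathcal A}$ on tail events) and to show the excised ``bad'' arcs $S_\varphi$ carry negligibly many zeros. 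Your identification of ``Kac--Rice on bad cells'' as the main obstacle is the obstacle of \cite{RW}, not of the present universality argument; here the smoothing goes through Green's formula $\sum G(\zeta_i)=-\frac{1}{2\pi}\int\log|H|\,\Delta G$ and Monte-Carlo sampling of $\log|H|$, not through sampling $F,F'$ at finitely many points and detecting sign changes.
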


The density of the sequence $\{m\}$ above can be worked out explicitly, but we will omit the details. It is plausible to conjecture that the variance is indeed as small as in (iii) of Theorem \ref{theorem:gaussian:d=2:expectation}. However, this is an extremely delicate matter given the highly nontrivial analysis of the gaussian case. 

To prove Theorem \ref{theorem:general}, we will need to show that the set $\CE_{\lambda}$ satisfies the following assumption which is later proven to be satisfied in Section \ref{section:equi}.

\begin{assumption}\label{assumption:equi} There exists a constant $\ep_0>0$ such that the following holds. 
  For any vector $r \in \R^2$ with $|r| = \frac{1}{2\pi \lambda}$,
  the set $\{\langle r, \mu \rangle, \mu \in \CE_{\lambda}\}$ can not be covered
  by less than $O(N^{\ep_0})$ 
  intervals of length $N^{-1}$ in $[-1,1]$.
\end{assumption}

Theorem \ref{theorem:general} is stated for almost all $m$ mainly because of the deterministic Lemma \ref{lemma:separation:d=2} of
Section \ref{section:smallinterval}, which in turn is needed for the
verification of one of our probabilistic conditions of the universality framework. We also need to
pass to almost all $m$ for a brief verification of Assumption
\ref{assumption:equi} above for $\CE_\lambda$.





Now we turn to $\T^d, d\ge 3$. While in this setting the cardinality $N$ of
$\CE_\lambda$ is relatively large compared to $\lambda$, the situation
is difficult by different reasons. Consider the following
example from \cite{RWY}.

\begin{example} \label{example3d} Let $F_0(x,y)$ be an eigenfunction on $\T^2$ with
  eigenvalue $4\pi^2 m$, and $S_0$ a curved segment length one
  contained in the nodal set, admitting an arc-length parameterization
  $\gamma_0:[0,1] \to S_0$ with curvature $\kappa_0(t) =
  |\gamma_0''(t)| >0$. For $n>0$, let $F_n(x,y,z)= F_0(x,y) \cos(2\pi
  n z)$, which is an eigenfunction on $\T^3$ with eigenvalue
  $4\pi^2(m+n^2)$. Let $\CC$ be the curve $\gamma(t) =
  (\gamma_0(t/\sqrt{2}), t/\sqrt{2})$. Standard computation shows that the curvature
  $\kappa(t) =\kappa_0(t/\sqrt{2})/2>0$ and the torsion $\tau(t)
  = \pm \kappa_0(t/\sqrt{2})/2$ is non-zero. Note that $\CC$ is contained in the nodal set of $F_n$ for all $n$. Thus we can have a non-trivial curve contained in the nodal set for arbitrary large $\lambda$.
\end{example} 

This example shows that the study of universality for discrete distributions in
$\T^d, d\ge 3$ can be highly complex (at least if we only assume $\gamma$ to have
non-vanishing curvature and torsion) as there is no deterministic upper bound for $\CZ(F)$. If we are not careful
with the choice of discrete distributions, our random function $F$ from
\eqref{eqn:F} might be one of the $F_n$ in Example \ref{example3d} with non-zero probability, and hence $\E
\CZ(F)$ is infinite. To avoid such type of singularity, in
what follows we will assume that the random variables $\ep_\mu$ satisfy Condition \ref{cond-variable}\eqref{cond-cont}. Note that this also holds for $d=2$.

\begin{theorem}[continuous distributions in
  $\T^d$, $d\ge 2$]\label{theorem:general:d>1} Assume that $\eps_{1,\mu},
  \eps_{2,\mu}, \mu \in \CE_\lambda$ are independent random
  variables  satisfying Condition \ref{cond-variable}\eqref{cond-cont}. Assume
  furthermore that the curve $\gamma$ extends analytically to the
  strip $[0, 1]\times [-\lambda^{-1}, \lambda^{-1}]$. Then for any
  fixed $k$ we have 
$$\E_{\ep_\mu} \CZ^k = \E_{\Bg} \CZ^k + O(\lambda^k/N^{c}).$$

In particularly for $\T^3$, with $\gamma$ and $\lambda$ as in Theorem \ref{theorem:gaussian:d=3}
$$\E_{\ep_\mu} \CZ =  \frac{2}{\sqrt{3}}\sqrt{m} + O(\lambda/N^{c}) \mbox{ and } \Var_{\ep_\mu}(\CZ) \ll \frac{\lambda^2}{N^c}.$$
\end{theorem}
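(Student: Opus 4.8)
The plan is to prove Theorem~\ref{theorem:general:d>1} by a Lindeberg-type replacement argument: swap the pairs $(\eps_{1,\mu},\eps_{2,\mu})$ for standard gaussians one index $\mu$ at a time, showing that $\E\CZ^k$ moves only negligibly at each of the $N$ steps. Setting this up first requires a deterministic grip on $\CZ$, which in $\T^d$, $d\ge3$, is not available a priori (Example~\ref{example3d}); here I would use the analyticity hypothesis. After truncating the $\ep_\mu$ at level $N^{1/2+o(1)}$ — lossless up to an event of probability $O(N^{-c})$ by Chebyshev — the holomorphic extension of $F$ is bounded by $N^{O(1)}$ on a fixed complex neighbourhood of $[0,1]$, because $\gamma$ extends analytically to $[0,1]\times[-\lambda^{-1},\lambda^{-1}]$ and $|\mu|\asymp\lambda$ on $\CE_\lambda$; so Jensen's formula, together with the pointwise lower bound $|F(t_0)|\ge N^{-c}$ from anti-concentration (Condition~\ref{cond-variable}\eqref{cond-cont}), gives $\CZ\ll\lambda\log N$ off an event of probability $O(N^{-c})$. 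The contributions of the exceptional events to $\E_{\ep_\mu}\CZ^k$ and $\E_\Bg\CZ^k$ are then negligible, so both reduce to the same functional on a common good event.

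Next I localize: partition $[0,1]$ into $M\asymp\lambda$ intervals $I_j$ of length $\asymp1/\lambda$ and write $\CZ=\sum_j\CZ_j$, so $\CZ^k=\sum_{(j_1,\dots,j_k)}\CZ_{j_1}\cdots\CZ_{j_k}$, with $\CZ_j=O(\log N)$ on the good event. On each $I_j$ the rescaled field $s\mapsto F(t_j+s/\lambda)$ extends holomorphically and boundedly to a fixed neighbourhood of $[0,1]$. The crucial technical point is \emph{stability}: off an event of probability $O(N^{-c})$ the restriction $F|_{I_j}$ is non-degenerate — its zeros are simple and separated, $|F|$ stays above an $N^{-c}$-threshold away from them, and $|F'|$ above such a threshold near them — so perturbing a single coefficient cannot change $\CZ_j$. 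Using the bounded density (anti-concentration of $F(t)$ at a point, and, after conditioning near a zero, of $F'$), this lets me replace $\CZ_{j_1}\cdots\CZ_{j_k}$ by a bounded surrogate $G$ depending only on the values and first derivatives of $F$ at $O(k\log N)$ sample points of $I_{j_1}\cup\cdots\cup I_{j_k}$, smooth away from a thin ``decision set'', with $\E|G-\CZ_{j_1}\cdots\CZ_{j_k}|=O(N^{-c})(\log N)^k$ under both distributions.

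The comparison is then the standard Lindeberg scheme applied to $\E G$. Each coordinate of the relevant vector $(F(t_i),F'(t_i))_i$ has the form $\frac{1}{\sqrt N}\sum_\mu(a_\mu\eps_{1,\mu}+b_\mu\eps_{2,\mu})$ with $|a_\mu|,|b_\mu|=O(1)$; swapping the pair $(\eps_{1,\mu},\eps_{2,\mu})$ to gaussians and Taylor-expanding, the terms of order $\le2$ cancel by the matched first two moments, and the third-order remainder is controlled by combining the (polynomially large) third derivatives of $G$ with anti-concentration — the partial sums keep continuous un-swapped coordinates and so avoid the $N^{-c}$-neighbourhood of the decision set except with probability $O(N^{-c})$. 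Summing over the $N$ coefficients and the $M^k\asymp\lambda^k$ tuples yields the stated error $O(\lambda^k/N^c)$. For the scheme to be meaningful one needs the covariance matrix $\Sigma(t_1,\dots,t_k)$ of the sampled $(F,F')$ values to be invertible with a quantitative lower bound on its least singular value for all but a controlled family of tuples; this is furnished by the analyticity of $\gamma$ together with the equidistribution of $\CE_\lambda$ (Assumption~\ref{assumption:equi}, which the analyticity in a fixed strip makes available for all $m$), just as $\Sigma$ was handled in the variance computations of \cite{RW,RWY}, while the residual ``bad'' tuples contribute $O(\lambda^k/N^c)$ directly from $\CZ_j=O(\log N)$.

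I expect the main obstacle to be exactly this conditioning problem: showing $\Sigma(t_1,\dots,t_k)$ is uniformly well-conditioned outside a negligible set of configurations (close points degenerate in a structured, renormalizable way; the arithmetic degeneracies are the genuinely delicate case) and making the stable-counting reduction of the second paragraph quantitative. This is the same difficulty that restricts the $\T^2$ statement in Theorem~\ref{theorem:general} to almost all $m$, and the analyticity-in-a-strip hypothesis of Theorem~\ref{theorem:general:d>1} is designed precisely to bypass the arithmetic input in general dimension. Finally, the two displayed consequences for $\T^3$ follow from the moment comparison and Theorem~\ref{theorem:gaussian:d=3}: $k=1$ gives $\E_{\ep_\mu}\CZ=\frac{2}{\sqrt{3}}\sqrt{m}+O(\lambda/N^c)$, and $\Var_{\ep_\mu}(\CZ)=\E_{\ep_\mu}\CZ^2-(\E_{\ep_\mu}\CZ)^2=\Var_\Bg(\CZ)+O(\lambda^2/N^c)\ll\lambda^2/N^c$, using $\Var_\Bg(\CZ)\ll m/N^{c'}$ from Theorem~\ref{theorem:gaussian:d=3} and $m\asymp\lambda^2$.
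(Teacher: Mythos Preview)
Your overall architecture (localize, then Lindeberg) matches the paper's, but the implementation you propose diverges from the paper in a way that creates the very obstacle you flag, and one of your key claims is incorrect.

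The paper does not build a surrogate $G$ from the values and derivatives $(F(t_i),F'(t_i))$ at sample points and then wrestle with the conditioning of the covariance matrix $\Sigma(t_1,\dots,t_k)$. Instead it applies Green's formula to write the smoothed root count as an integral of $\log|H|$ over a disc, approximates that integral by a Monte--Carlo average $\frac{1}{m_0}\sum_i \log|H(w_i)|\,\Delta G(w_i)$, and runs Lindeberg on \emph{that} functional after mollifying the logarithm. The only probabilistic inputs needed are a crude upper bound on $|H|$ (trivial) and a lower bound $|H(x_0)|\ge e^{-N^c}$ at \emph{some} point of each local disc. For continuous coefficients with bounded density this last bound is immediate: $\P(|H(x_0)|\le\delta)=O(N\delta)$ from a single coordinate, as in Section~\ref{section:smooth}. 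No $\Sigma$-conditioning, no simple-zero/stability argument, and no arithmetic input are required in the continuous case.

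This leads to the concrete gap in your proposal. You assert that Assumption~\ref{assumption:equi} ``the analyticity in a fixed strip makes available for all $m$'', and you plan to use it to control $\Sigma$. That is false: Assumption~\ref{assumption:equi} is a statement about the angular distribution of the lattice points $\CE_\lambda$ on the sphere and has nothing to do with the curve $\gamma$ or its analyticity; in the paper it is verified only for almost all $m$ (Section~\ref{section:equi}) and is used \emph{only} in the discrete-distribution Theorem~\ref{theorem:general}, not in Theorem~\ref{theorem:general:d>1}. What actually removes the ``almost all $m$'' restriction in Theorem~\ref{theorem:general:d>1} is the bounded density hypothesis (Condition~\ref{cond-variable}\eqref{cond-cont}), which gives anti-concentration and tail control for free. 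So the ``main obstacle'' you identify is an artifact of your chosen surrogate; the paper's $\log|H|$ route avoids it entirely. Your final paragraph deriving the $\T^3$ consequences from the moment comparison and Theorem~\ref{theorem:gaussian:d=3} is correct.
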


The rest of the note is organized as follows. We first introduce  in Section
\ref{section:universality} a general scheme
from \cite{TV}, \cite{DOV} and \cite{OV} to prove our universality result, a sketch of proof for these results will
be discussed in Section \ref{pgcomplex}. In the next phase, we prove
Theorem \ref{theorem:general:d>1} for smooth distributions in Section \ref{section:smooth}. The
proof of Theorem \ref{theorem:general} will be carried out throughout Section
\ref{section:smallball}, \ref{section:smallinterval}, and \ref{section:equi} to check various regulatory conditions.

{\bf Notation.} We consider $\lambda$ as
an asymptotic parameter going to infinity and allow all other quantities to depend on $\lambda$ unless they are
explicitly declared to be fixed or constant.  We write $X =
O(Y)$, $X \ll Y$, or $Y \gg X$ if $|X| \leq CY$
for some fixed $C$; this $C$ can depend on
other fixed quantities such as the the parameter $K$ of
Condition 1 and the curvatures of $\gamma$. All the norms in this note, if not specified, will be the usual $\ell_2$-norm.

\section{Supporting lemmas: general universality results}\label{section:universality}

Our starting point uses the techniques
developed by T.Tao and V. Vu from \cite{TV}, and subsequently by Y. Do, O. Nguyen and V. Vu \cite{DOV} and by O. Nguyen and V. Vu \cite{OV}.

%
%

Let 
$$H(x) = \sum_{\mu\in \CE} \xi_\mu f_{\mu}(x),$$ 
where $x$ belongs to some set $\CB\subset \R$.


\begin{assumption}\label{condition:universality}  Consider the following conditions.

\begin{enumerate}
\item ({\it Analyticity})\label{analytic} $H$ has an analytic continuation on the set $\CB + B(0, 1)$ on the complex plane, which is also denoted by $H$.
\vskip .1in
\item ({\it Anti-concentration})\label{smallballcond} For any constants $A$ and $c$, there exists a constant $C$ such that for every $x\in \CB$, with probability at least $1-C N^{-A} $, there exists  $x'\in B(x, 1/100)$ such that $|H(x)| \ge \exp\left (-N^{c }\right )$.
\vskip .1in

\item ({\it Boundedness})\label{boundedness} For any constants $A$ and $c $, there exists a constant $C$ such that for every $x\in \CB$, 
$$\P\left (|H(z)| \le \exp\left (N^{c }\right ) \mbox{ for all $z\in B(x, 1)$}\right ) \ge 1- CN^{-A} .$$
\vskip .1in

\item ({\it Contribution of tail events})\label{tailex} For any $k\ge 1$, there exist constants $A, c>0$ such that for any $x\in \CB$ and any event $\mathcal A$ with probability at most $N^{-A}$, we have
$$\E \CZ_{B(x, 1)}^{k} \textbf{1}_{\mathcal A} = O_{k, A, c}(N^{-c}),$$
where $\CZ_{B(x, 1)}$ is the number of roots of $H$ in the complex
ball $B(x, 1)$.

 
\item ({\it Delocalization})\label{delocalization} There exists a constant $c>0$ such that for every $z\in \CB + B(0, 1)$ and every $\mu\in \CE$,
$$\frac{|f_\mu (z)|}{\sqrt{\sum_\mu f_\mu^2(z)}}\le N^{-c},$$
\vskip .1in

\item ({\it Derivative growth})\label{Dev} For any constant $c>0$, there exists a constant $C$ such that for any real number $x\in \CB + [-1, 1]$,
\begin{equation}\label{firstDev}
\sum_\mu |f_\mu'(x)|^2 \le C\left ( N^{c} \sum_\mu |f_\mu(x)|^2\right ),
\end{equation}
\vskip .1in
 as well as
\begin{equation}\label{secondDev}
\sup_{z\in B(x, 1)}|f_\mu''(z)|^2 \le C\left ( N^{c} \sum_\mu |f_\mu(x)|^2\right ).
\end{equation}
\end{enumerate}
\end{assumption}

Note that the last three conditions are deterministic, which are effective for trigonometric functions.  Now we state the main result from \cite{OV}.

\begin{theorem}[Local universality, real roots]\label{theorem:real:macro} Let $H(x)=\sum_{\mu} \xi_{\mu} f_\mu(x)$, with $H(x)$ be a random function with $f_{\mu}$ satisfying Assumption \ref{condition:universality}. Let $k$ be an integer constant. There exists a constant $c>0$ such that the following holds. For any real numbers $x_1,\dots, x_k$ in $\CB$, and for every smooth function $G$ supported on $\prod_{j=1}^{k} [x_j-c,x_j+c]$ with $|\nabla^a G(z)| \le 1$ for $0\le a \le 2k$ we have
\begin{equation}
\E_{\ep_\mu} \sum_{i_1,\dots,i_k} G (\zeta_{i_1},\dots, \zeta_{i_k}) -\E_{\Bg} \sum_{i_1,\dots,i_k} G (\zeta_{i_1},\dots, \zeta_{i_k}) = O(N^{-c}),\label{uni111}
\end{equation}
where the $\zeta_{i}$ are the roots of $H$, the sums run over all possible assignments of $i_1, \dots, i_k$ which are not necessarily distinct. 
\end{theorem}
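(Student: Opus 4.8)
The plan is to deduce Theorem~\ref{theorem:real:macro} from the corresponding statement for \emph{complex} roots, which is the version actually proved in \cite{TV, DOV, OV}, together with a localization argument that controls the contribution of roots near but not on the real axis. The overall strategy follows the classical Tao--Vu comparison philosophy: one wants to show that the local statistics of the zero set of $H$ depend only on the first few moments of the coefficients $\xi_\mu$, and this is done by a Lindeberg swapping argument in which the $\xi_\mu$ are replaced one at a time by standard Gaussians. The seven conditions of Assumption~\ref{condition:universality} are precisely what is needed to run this machine: analyticity lets one represent the counting functional $\sum_{i_1,\dots,i_k} G(\zeta_{i_1},\dots,\zeta_{i_k})$ as a contour integral of a smooth functional of $H$ and its derivatives on the relevant balls (so that it becomes amenable to Taylor expansion in each coefficient); delocalization and derivative growth ensure that swapping a single coefficient perturbs $H$ on $B(x_j,1)$ by a negligible amount relative to its typical size; anti-concentration and boundedness guarantee that with overwhelming probability $H$ is neither too small nor too large on the test balls, so that $\log|H|$-type quantities are well-behaved; and the tail-event condition \eqref{tailex} absorbs the low-probability exceptional set on which any of these good events fails, using a crude deterministic bound on the number of zeros (Jensen's inequality applied to the analytic continuation).

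In more detail, the steps I would carry out are as follows. First, fix the smooth test function $G$ supported on $\prod_j [x_j - c, x_j + c]$. Using analyticity (condition~\eqref{analytic}) and the argument principle, rewrite $\sum_{i_1,\dots,i_k} G(\zeta_{i_1},\dots,\zeta_{i_k})$ as an integral over a product of small circles of a functional built from $H'/H$; smoothing this with $G$ makes the integrand a bounded smooth function of the values of $H, H', H''$ at finitely many points, provided $H$ stays away from $0$ (handled by \eqref{smallballcond}) and stays bounded (handled by \eqref{boundedness}). Second, condition on the good event $\mathcal G$ on which the anti-concentration and boundedness estimates hold at each $x_j$; by \eqref{tailex} the complementary event contributes only $O(N^{-c})$ to both the $\ep_\mu$- and the Gaussian expectations, so it suffices to compare $\E[\,\cdot\,\mathbf 1_{\mathcal G}]$ for the two ensembles. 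Third, run the Lindeberg replacement: order the coefficients $\mu \in \CE$ and define hybrid functions $H^{(j)}$ in which the first $j$ coefficients are Gaussian and the rest are the original $\ep_\mu$. For each $j$, Taylor-expand the functional in the single variable $\xi_{\mu_j}$ to third order around the value obtained by deleting that term; delocalization \eqref{delocalization} bounds the size of $\xi_{\mu_j} f_{\mu_j}$ by $N^{-c}\|H\|$ while \eqref{firstDev}, \eqref{secondDev} bound the contributions of the derivatives, so each Taylor coefficient of order $\le 2$ is controlled and the error term is $O(N^{-1-c})$ per swap. Since the original and Gaussian coefficients share the first two moments, the order-$\le 2$ terms cancel in expectation, leaving a total error of $N \cdot O(N^{-1-c}) = O(N^{-c})$ after all $N$ swaps. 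Summing the telescoping differences gives \eqref{uni111}.

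The main obstacle, and the point where the seven conditions really earn their keep, is the \emph{uniformity} of the Taylor-expansion error over the swapping: one needs the bound on the $j$-th increment $|\Phi(H^{(j)}) - \Phi(H^{(j-1)})|$ to be genuinely $o(1/N)$ \emph{on the good event}, which forces the delocalization exponent and the derivative-growth losses to be beatable by an $N^{-c}$ with $c>0$, and it forces the anti-concentration bound $|H(x')| \ge \exp(-N^c)$ to hold at the sub-polynomially small scale (so that $1/H$ and its logarithmic derivative do not blow up faster than the polynomial gain from each swap). A second delicate point is the passage from complex to real roots: one must check that the contour-integral representation can be localized to a neighbourhood of the real axis of width $\asymp N^{-c}$ without losing control, i.e.\ that $H$ has no anomalous clustering of complex zeros just off $\R$; this again is handled by combining \eqref{smallballcond}, \eqref{boundedness} and the Jensen bound on zero counts from \eqref{tailex}. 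Since all of these inputs are \emph{hypotheses} of the theorem (it is quoted verbatim from \cite{OV}), the role of this section is simply to record the statement; the actual verification of Assumption~\ref{condition:universality} for the trigonometric system $f_\mu(x) = N^{-1/2} e^{2\pi i \langle \mu, \gamma(x)\rangle}$ is deferred to the later sections, and that verification — especially of \eqref{smallballcond} and \eqref{tailex} for discrete $\ep_\mu$ — is where the genuinely new work of the paper lies.
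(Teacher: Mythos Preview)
Your high-level strategy---establish universality for complex roots via a Lindeberg swap, then pass to real roots by ruling out clustering of complex zeros just off $\R$---matches the paper's. Two technical points differ, and the first is a genuine gap in your sketch.

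The paper does \emph{not} represent $\sum_i G_j(\zeta_i - z_j)$ through the argument principle and $H'/H$. It uses Green's formula,
\[
\sum_i G_j(\zeta_i - z_j) \;=\; -\frac{1}{2\pi}\int_{B(z_j,c)} \log|H(z)|\,\Delta G_j(z-z_j)\,dz,
\]
and then replaces this integral by a Monte--Carlo average over random sample points $w_{j,1},\dots,w_{j,m_0}\in B(z_j,c)$; the Lindeberg swap is performed on a smoothed functional of the finitely many values $H(w_{j,i})$. The reason this route is chosen is that hypothesis~\eqref{smallballcond} only guarantees $|H(x')|\ge \exp(-N^c)$ at \emph{some} point $x'$ in the ball, not uniformly; the paper upgrades this to an $L^2$ bound on $\log|H|$ over the ball via Harnack's inequality, which is exactly what the Monte--Carlo variance estimate needs. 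Your contour-integral formulation with $H'/H$ would instead require $|H|$ to be bounded below along an entire circle, and the stated hypotheses do not supply that.

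For the complex-to-real reduction, the paper isolates a specific repulsion estimate (Lemma~\ref{repulsionlmm}): $\P\big(\CZ_H B(x,\gamma)\ge 2\big)\le C\gamma^{3/2}$ for $\gamma=N^{-c}$. This is proved by first invoking the already-established complex universality to reduce to the Gaussian case, and then applying Rouch\'e's theorem to compare $\tilde H$ with its linearization $\tilde H(x)+\tilde H'(x)(z-x)$ on $\partial B(x,2\gamma)$. It is the derivative-growth hypotheses \eqref{firstDev} and \eqref{secondDev} that control the quadratic remainder here, not conditions \eqref{smallballcond}--\eqref{tailex} as you indicate.
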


\begin{remark}
By induction on $k$, the above theorem still holds if in \eqref{uni111}, the $i_1, \dots, i_k$ are required to be distinct.  
\end{remark}

We will provide a sketch of the proof of this theorem in Section \ref{pgcomplex}.


Now we consider  $F$ from \eqref{eqn:F}. Set the scaled function $H: [0,\lambda] \to \R$ to be
\begin{align}\label{eqn:H(x)}
H(x) :&=F\left (\frac{x}{\lambda}\right ) =\frac{1}{\sqrt N} \sum_{\mu\in \CE_\lambda} \eps_{\mu,1} \cos\left (2\pi  \left \langle \mu, \gamma\left (\frac{x}{\lambda}\right )\right \rangle\right ) + \eps_{\mu,2} \sin \left (2\pi  \left \langle \mu, \gamma\left (\frac{x}{\lambda}\right )\right \rangle\right ) \nonumber \\
&:= \frac{1}{\sqrt N}\sum_{\mu} \eps_{\mu,1} g_\mu(x)+ \eps_{\mu,2} h_\mu(x).\\
\nonumber \end{align}

Our main contributions are the following results.

\begin{theorem}\label{theorem:universality1} Under the assumptions of Theorem \ref{theorem:general:d>1}, let $\CB_1=[0, \lambda]$, then the function $H$ in \eqref{eqn:H(x)} satisfies the assumption (with $\CB = \CB_1$) and hence the conclusion of Theorem \ref{theorem:real:macro}. 
\end{theorem}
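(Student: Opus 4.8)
The goal is to verify, for the scaled function $H$ from \eqref{eqn:H(x)} with $d \ge 2$ and continuous coefficient distribution, each of the seven items in Assumption \ref{condition:universality} on the set $\CB_1 = [0,\lambda]$; Theorem \ref{theorem:real:macro} then applies verbatim. I would treat the conditions in two groups: the three deterministic ones (\ref{delocalization}, \ref{Dev}, together with \ref{analytic}), which follow from hard pointwise estimates on the trigonometric building blocks $g_\mu, h_\mu$, and the three probabilistic ones (\ref{smallballcond}, \ref{boundedness}, \ref{tailex}), which use the continuity hypothesis on $\ep_\mu$.

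\textbf{Deterministic conditions.} For \eqref{analytic}, note $\gamma$ extends analytically to the strip $[0,1]\times[-\lambda^{-1},\lambda^{-1}]$, so $x \mapsto \gamma(x/\lambda)$ extends to $[0,\lambda]\times[-1,1]$, and hence each $g_\mu, h_\mu$, being a composition of entire functions $\cos, \sin$ with this analytic map, extends analytically to $\CB_1 + B(0,1)$; the sum $H$ inherits this. For \eqref{delocalization} I would observe that for real $x$, $g_\mu(x)^2 + h_\mu(x)^2 = 1$ (since $\cos^2 + \sin^2 = 1$), so $\sum_\mu (g_\mu^2 + h_\mu^2)(x) = N$ on the real axis, and by analytic continuation / compactness one controls $\sum_\mu(|g_\mu|^2+|h_\mu|^2)(z)$ from below by a positive constant times $N$ on the complex neighborhood, while each $|g_\mu(z)|, |h_\mu(z)| = O(1)$ uniformly (the analytic extension of $\cos 2\pi\langle\mu,\gamma(x/\lambda)\rangle$ is bounded because $\langle\mu,\gamma\rangle$ has imaginary part $O(|\mu|/\lambda) = O(1)$ on the strip); this gives the ratio bound $O(N^{-c})$ with $c = 1/2 - o(1)$. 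For \eqref{Dev}, differentiating $g_\mu(x) = \cos(2\pi\langle\mu,\gamma(x/\lambda)\rangle)$ produces a factor $\tfrac{2\pi}{\lambda}\langle\mu,\gamma'(x/\lambda)\rangle = O(1)$ since $|\mu| \asymp \lambda$ and $|\gamma'| = 1$; so $\sum_\mu|g_\mu'(x)|^2 + |h_\mu'(x)|^2 = O(N)$, which is $\le C N^c \sum_\mu(|g_\mu|^2+|h_\mu|^2)(x)$ as the right side is $\asymp N^{1+c}$ on real $x$. The second-derivative bound \eqref{secondDev} is analogous, bounding $|g_\mu''(z)|$ on $B(x,1)$ using analyticity (Cauchy estimates) and the same $|\mu|/\lambda = O(1)$ scaling.

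\textbf{Probabilistic conditions.} For boundedness \eqref{boundedness}: on $B(x,1)$ we have $|g_\mu(z)|, |h_\mu(z)| = O(1)$ deterministically, so $|H(z)| \le \tfrac{1}{\sqrt N}\sum_\mu O(1)(|\ep_{\mu,1}| + |\ep_{\mu,2}|) = O(\sqrt N \max_\mu(|\ep_{\mu,1}|+|\ep_{\mu,2}|))$; since each $\ep_{\mu,j}$ has bounded density, $\P(|\ep_{\mu,j}| > t) \ll$ (decays) and a union bound over the $O(N)$ coefficients gives $\max_\mu |\ep_{\mu,j}| \le N^{c}$ with probability $\ge 1 - CN^{-A}$, whence $|H(z)| \le \exp(N^c)$ on $B(x,1)$ off an event of probability $O(N^{-A})$. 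For the tail-event condition \eqref{tailex}: by Jensen/argument-principle one has $\CZ_{B(x,1)} \le C\log\frac{\sup_{B(x,2)}|H|}{|H(x_0)|}$ for a suitable reference point $x_0$; combining the deterministic bound $\CZ_{B(x,1)} = O(\lambda)$ (or $O(N^{O(1)})$) that follows from the degree/area of $H$ with a Cauchy--Schwarz splitting $\E[\CZ^k \mathbf 1_{\mathcal A}] \le (\E \CZ^{2k})^{1/2}(\P(\mathcal A))^{1/2}$ reduces matters to a polynomial-in-$N$ moment bound $\E\CZ^{2k} = O(N^{O(1)})$, which again follows from $\CZ = O(N^{O(1)})$ deterministically plus the boundedness estimate controlling the exceptional set; choosing $A$ large enough beats $N^{O(1)}$ and yields $O_{k,A,c}(N^{-c})$.

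\textbf{The anti-concentration condition \eqref{smallballcond} is the main obstacle} and is where the continuity of $\ep_\mu$ and the curvature/analyticity of $\gamma$ genuinely enter. One must show that for every $x \in \CB_1$, with overwhelming probability there is a nearby point $x' \in B(x,1/100)$ with $|H(x')| \ge \exp(-N^c)$. The strategy is: cover $B(x,1/100)$ by $\asymp N^{C_0}$ sample points $x_j$; on the real torus, the vectors $(g_\mu(x_j), h_\mu(x_j))_\mu$ are, thanks to the non-vanishing curvature and the remark following Condition \ref{cond-curve} (a short arc cannot lie in a ball of radius $N^{-\alpha}/\lambda$) together with Assumption \ref{assumption:equi}, "spread out" enough that the random vector $(H(x_1),\dots,H(x_{N^{C_0}}))$ cannot concentrate on a tiny ball; quantitatively, I would invoke a small-ball / least-singular-value type estimate for $\sum_\mu \ep_{\mu,1} g_\mu(x_j) + \ep_{\mu,2} h_\mu(x_j)$ using the bounded density of the $\ep$'s (so each one-dimensional small-ball probability is $\le K \cdot (\text{length})$) and a tensorization / $\eps$-net argument to conclude $\P(\max_j |H(x_j)| < \exp(-N^c)) \le CN^{-A}$. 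This is precisely the content deferred to Sections \ref{section:smallball}, \ref{section:smallinterval}, \ref{section:equi} (for the discrete case) and to Section \ref{section:smooth} (in the present continuous case the bounded density makes the one-dimensional anti-concentration immediate, so only the geometric spreading of the $g_\mu, h_\mu$ — i.e. a lower bound on an associated Gram-type determinant — has to be established). Once all seven items are checked, Theorem \ref{theorem:real:macro} gives the conclusion and the proof is complete.
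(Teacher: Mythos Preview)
Your treatment of the deterministic conditions \eqref{analytic}, \eqref{delocalization}, \eqref{Dev} and of the boundedness condition \eqref{boundedness} is essentially correct and matches the paper. There are, however, two points where you diverge from the paper and where your sketch is either wrong or badly over-complicated.

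\textbf{Anti-concentration \eqref{smallballcond} is not the main obstacle; it is trivial here.} You describe this as the hard step and propose a covering argument, Gram-type determinants, and an appeal to Assumption~\ref{assumption:equi}. None of this is needed, and invoking Assumption~\ref{assumption:equi} is illegitimate in this theorem: that assumption is only formulated and verified for $\T^2$ and for almost all $m$, whereas Theorem~\ref{theorem:general:d>1} covers all $d\ge 2$ and all eigenvalues. The paper's argument is a one-line small-ball estimate at the point $x$ itself (take $x'=x$): for any $\mu_0\in\CE_\lambda$, at least one of $|\cos(2\pi\langle\mu_0,\gamma(x/\lambda)\rangle)|$, $|\sin(2\pi\langle\mu_0,\gamma(x/\lambda)\rangle)|$ is $\Theta(1)$, so conditioning on all coefficients except $\eps_{1,\mu_0},\eps_{2,\mu_0}$ and using bounded density gives $\P(|H(x)|\le\delta)=O(N\delta)$. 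Setting $\delta=e^{-N^c}$ finishes it. No curvature, no equidistribution, no geometry of the frequency set enters.

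\textbf{Tail events \eqref{tailex}: there is no deterministic polynomial bound on $\CZ_{B(x,1)}$.} You write that $\CZ_{B(x,1)}=O(\lambda)$ or $O(N^{O(1)})$ ``follows from the degree/area of $H$'', but $H$ is not a polynomial, and Example~\ref{example3d} shows that in $\T^d$, $d\ge 3$, an eigenfunction can vanish identically on the curve, so no such deterministic bound exists. Your Jensen-based idea is the right one and is what the paper does: from $\CZ_{B(x,1/2)}\ll \log K - \log|H(x)|$ with $K=\sup_{B(x,1)}|H|$, apply H\"older to each term separately, bounding $\E|\log K|^{2k}$ via \eqref{boundd1} and $\E|\log|H(x)||^{2k}$ via the one-point small-ball bound above (valid for all $\delta>0$, not just $\delta=e^{-N^c}$). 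The moment bound $\E\CZ^{2k}=O(N^{O(1)})$ that you need then comes from these two estimates, not from any deterministic zero count.
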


\begin{theorem}\label{theorem:universality2}
Under the assumptions of Theorem \ref{theorem:general}, let $\CB_2=[0,\lambda]\setminus \cup_{\varphi\in \CD} (\lambda S_\varphi)$ where $\CD$ is the set of directions 
$$\CD = \left \{\frac{\mu_1 - \mu_2}{\|\mu_1 -\mu_2\|}, \mu_1 \neq \mu_2, \mu_1, \mu_2\in \CE_\lambda\right \}.$$
and $$S_\varphi:=\{t\in [0,1], \angle(\gamma'(t), \varphi) <N^{-3}\}.$$
Then the function $H$ in \eqref{eqn:H(x)} satisfies the assumption (with $\CB = \CB_2$) and hence the conclusion of Theorem \ref{theorem:real:macro}.
\end{theorem}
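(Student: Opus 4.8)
The plan is to verify each of the seven conditions in Assumption \ref{condition:universality} for the scaled function $H$ on the restricted domain $\CB_2$, with $f_\mu$ running over the $2N$ trigonometric functions $g_\mu(x) = \cos(2\pi\langle\mu,\gamma(x/\lambda)\rangle)$ and $h_\mu(x) = \sin(2\pi\langle\mu,\gamma(x/\lambda)\rangle)$ and $\xi$ the $\eps_{\mu,1},\eps_{\mu,2}$. Conditions \eqref{analytic} (analyticity) follows directly from Condition \ref{cond-curve}\eqref{cond-ana}: since $\gamma$ extends analytically to $[0,1]\times[-\ep,\ep]$, each $g_\mu,h_\mu$ extends to a strip around $\CB_2$ in $\C$, though one should track the growth of the extension, which feeds into Conditions \eqref{boundedness} and \eqref{Dev}. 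For the three deterministic conditions \eqref{delocalization}, \eqref{Dev}: delocalization asks that no single $f_\mu$ dominates $\sqrt{\sum_\mu f_\mu^2}$ pointwise on the complex neighborhood; here $\sum_\mu(g_\mu^2+h_\mu^2)$ is, on the real line, identically $N$ (each term is $\cos^2+\sin^2=1$), so real-line delocalization is immediate, and one extends to the unit complex neighborhood using the analytic bound on $\gamma$ together with the fact (noted in the excerpt, below Condition \ref{cond-curve}) that a short arc of $\gamma$ cannot lie in a tiny ball — this is what prevents all the $\langle\mu,\gamma(x/\lambda)\rangle$ from being simultaneously near-constant. Derivative growth \eqref{Dev} reduces to estimating $\sum_\mu|g_\mu'|^2+|h_\mu'|^2$: since $g_\mu'(x) = -\frac{2\pi}{\lambda}\langle\mu,\gamma'(x/\lambda)\rangle\sin(\cdots)$ and $|\mu|=\lambda/(2\pi)$, we have $|g_\mu'(x)|\le 1$ pointwise (with $\|\gamma'\|=1$), so $\sum_\mu|f_\mu'|^2\le 2N = 2\sum_\mu|f_\mu|^2$ on the real line, and similarly the second-derivative bound \eqref{secondDev} follows with an extra constant and the complex extension.

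The substantive work is in the two probabilistic conditions \eqref{smallballcond} (anti-concentration) and \eqref{boundedness}, together with the tail-event estimate \eqref{tailex}. Boundedness \eqref{boundedness} should follow from a crude union bound: $|\eps_\mu|$ is at most polynomially large except with probability $N^{-A}$ (needs the distribution tails, which for both cases of Condition \ref{cond-variable} are at worst controlled — for the continuous case via the density bound after truncation, for the mixed case directly), and then $|H(z)|\le \frac{1}{\sqrt N}\sum_\mu(|\eps_{\mu,1}||g_\mu(z)|+|\eps_{\mu,2}||h_\mu(z)|)$, where $|g_\mu(z)|,|h_\mu(z)|\le\exp(O(\lambda\cdot\text{strip width}))=\exp(O(1))$ on $B(x,1)$ in the scaled variable. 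The anti-concentration \eqref{smallballcond} is where the restriction to $\CB_2$ and Assumption \ref{assumption:equi} enter: we must show that, with overwhelming probability, $|H|$ is not super-exponentially small somewhere in every unit ball. Here I would use a small-ball / anti-concentration argument for linear combinations $\sum_\mu \eps_\mu v_\mu(x)$ of the coefficient vector: evaluating $H$ at a suitable net of points in $B(x,1/100)$ produces a linear map whose smallest singular value must be bounded below, and this is controlled by the least common denominator / arithmetic structure of the vectors $(g_\mu(x_i), h_\mu(x_i))$. The role of removing the bad directions $\lambda S_\varphi$ — i.e. the $x$ for which $\gamma'(x/\lambda)$ is nearly parallel to some difference direction $(\mu_1-\mu_2)/\|\mu_1-\mu_2\|$ — is precisely to ensure that distinct frequencies $\mu_1,\mu_2$ separate enough along the curve that the relevant vectors are not almost collinear; Assumption \ref{assumption:equi}, stating that $\{\langle r,\mu\rangle:\mu\in\CE_\lambda\}$ is not coverable by few short intervals for $|r|=1/(2\pi\lambda)$, is exactly the input that rules out excessive clustering of the phases and hence gives the needed lower bound on the singular value. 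This is the main obstacle: making the anti-concentration quantitative enough to beat $\exp(-N^c)$ while only having the "almost all $m$" versions of the separation statements (Lemma \ref{lemma:separation:d=2} and Assumption \ref{assumption:equi}).

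Finally, \eqref{tailex} — that $\E\CZ_{B(x,1)}^k\indicator{\CA}=O(N^{-c})$ for any event $\CA$ of probability $\le N^{-A}$ — should follow by combining a deterministic bound on the number of complex zeros of $H$ in a unit ball (Jensen's formula applied using the boundedness bound from \eqref{boundedness}, giving $\CZ_{B(x,1)}\le N^{O(1)}$ on the high-probability event) with Hölder's inequality against the tail probability, choosing $A$ large relative to $k$ and the exponent in the deterministic zero count. Once all seven conditions of Assumption \ref{condition:universality} are checked with $\CB=\CB_2$, Theorem \ref{theorem:real:macro} applies verbatim and yields the conclusion. Throughout, one keeps the constant $\ep_0$ of Assumption \ref{assumption:equi} and the constant $\alpha$ from the remark after Condition \ref{cond-curve} as the fixed exponents against which all the $N^c$ error terms are compared.
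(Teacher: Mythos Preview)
Your overall framework is right---verify the six conditions of Assumption~\ref{condition:universality}---and your treatment of \eqref{analytic}, \eqref{boundedness}, \eqref{delocalization}, \eqref{Dev} matches the paper (these are handled in Section~\ref{section:smooth} and carry over verbatim to the discrete case). But you have swapped the roles of the two nontrivial conditions, and each of your arguments for \eqref{smallballcond} and \eqref{tailex} has a genuine gap.

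\textbf{Anti-concentration \eqref{smallballcond}.} The restriction to $\CB_2$ plays \emph{no} role here; the paper's Theorem~\ref{theorem:smallball} holds on every short interval of $[0,1]$. Your sketch (``evaluate on a net, bound the smallest singular value via LCD, use the $S_\varphi$ removal to separate frequencies'') does not supply the key mechanism. What the paper actually does: assume $\P(|H(x_j)|\le N^{-C})\ge N^{-A}$ at two nearby points $t_1,t_2$, apply the inverse Littlewood--Offord theorem (Theorem~\ref{theorem:ILO}) to get that the phases $e^{2\pi i\langle\mu,\gamma(t_j)\rangle}$ are $\beta$-close to small GAPs, multiply to put the unit-modulus points $z_\mu=e^{2\pi i\langle\mu,\gamma(t_1)-\gamma(t_2)\rangle}$ near a single GAP, and then invoke a \emph{sum-product} result (Theorem~\ref{theorem:circle}, proved via an effective Nullstellensatz) saying a GAP can meet the unit circle in at most $\exp(C_r\log M/\log\log M)$ well-separated points. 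This forces the set $\{\langle\mu,\gamma(t_1)-\gamma(t_2)\rangle\}$ into few short intervals, contradicting Assumption~\ref{assumption:equi}. The sum-product/circle step is the substantive idea and is absent from your proposal.

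\textbf{Tail-event bound \eqref{tailex}.} Your Jensen-plus-H\"older argument is exactly what the paper does in the \emph{continuous} case (Section~\ref{sub:tailexsmooth}), and it works there because the density bound gives $\P(|H(x)|\le\delta)=O(N\delta)$ for \emph{all} $\delta>0$, hence $\E|\log|H(x)||^{2k}<\infty$. In the discrete setting you only have $\P(|H(x)|\le N^{-C})\le N^{-A}$ for one specific scale, which is not enough to control $\E|\log|H(x)||^{2k}$ or to handle the event where Jensen's upper bound blows up. The paper instead proves a \emph{deterministic} zero-count (Theorem~\ref{theorem:deterministic:d=2}): for every $x\in\CB_2$ and every eigenfunction $\Phi$, the number of zeros of $\Phi\circ\gamma$ in $B(x/\lambda,N^7/\lambda)$ is $O(N^7)$. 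This is where the removal of the $S_\varphi$ is actually used---away from those bad directions one gets a uniform $L^2$ lower bound on short arcs (via Lemma~\ref{lemma:separation:d=2} and an oscillatory-integral estimate), which feeds into Jensen to give the deterministic bound. With $\CZ_{B(x,1)}\le N^{O(1)}$ deterministically, \eqref{tailex} is then immediate.

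In short: the $\CB_2$ restriction is for \eqref{tailex}, not \eqref{smallballcond}; and for \eqref{smallballcond} you are missing the GAP/sum-product argument that converts inverse Littlewood--Offord structure into a contradiction with equidistribution.
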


We refer the reader to Section \ref{section:smallinterval} for the motivation of introducing $\CD$ and $S_\varphi$ as above. As a consequence, we have the following.

\begin{theorem}\label{theorem:subinterval:1&2}
Let $H$ be the function in \eqref{eqn:H(x)}. Under the assumptions of Theorem \ref{theorem:general:d>1} (respectively Theorem \ref{theorem:general}), for any $k\ge 1$, there exists a constant $c>0$ such that for any intervals $I_1, \dots, I_k\subset [0, \lambda]$ each belongs to $\CB_1$ (respectively $\CB_2$) and has length $O(1)$, we have 
$$\E_{\ep_\mu} \prod_{j=1}^k\CZ_{j} = \E_{\Bg}  \prod_{j=1}^k\CZ_{j} + O_k( N^{-c})$$
where $\CZ_j$ is the number of roots of $H$ in $I_j$.
\end{theorem}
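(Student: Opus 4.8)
The plan is to deduce the statement from the local universality result Theorem~\ref{theorem:real:macro}, which is available for $H$ once we know it satisfies Assumption~\ref{condition:universality}: taking $\CB=\CB_1$ under the hypotheses of Theorem~\ref{theorem:general:d>1} and $\CB=\CB_2$ under those of Theorem~\ref{theorem:general}, this is exactly the content of Theorem~\ref{theorem:universality1} and Theorem~\ref{theorem:universality2} respectively. Write $\kappa>0$ for the constant produced by Theorem~\ref{theorem:real:macro} (it depends only on $k$ and the constants in Assumption~\ref{condition:universality}); applied to a test function $G$ with $|\nabla^a G|\le B$ for $0\le a\le 2k$ it gives, by linearity in $G$, an error $O(BN^{-\kappa})$. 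Since an interval of length $O(1)$ is a disjoint union of $O_k(1)$ intervals of length $<\kappa$, expanding $\prod_j\CZ_j$ over such partitions of the $I_j$ reduces the claim to the case $|I_j|<\kappa$ for all $j$; and since the real roots of $H$ are almost surely simple and almost surely avoid the finitely many prescribed endpoints, we may ignore endpoint ambiguities throughout.

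Fix a parameter $\delta=N^{-\beta}$ with $\beta>0$ to be chosen, and for each $j$ pick the midpoint $x_j$ of $I_j$ and smooth bump functions $g_j^-\le\oindicator{I_j}\le g_j^+$ on $\R$, supported in $[x_j-\kappa,x_j+\kappa]$, equal to $1$ on $I_j$ (respectively on the $\delta$-shrinking of $I_j$), with $g_j^+-g_j^-$ supported in the union $J_j$ of the two $\delta$-neighbourhoods of the endpoints of $I_j$, and $|(g_j^\pm)^{(a)}|\le O_k(\delta^{-a})$ for $0\le a\le 2k$. With $\zeta_i$ the real roots of $H$,
$$\prod_{j=1}^k\sum_i g_j^-(\zeta_i)\ \le\ \prod_{j=1}^k\CZ_j\ \le\ \prod_{j=1}^k\sum_i g_j^+(\zeta_i),$$
and the two outer quantities equal $\sum_{i_1,\dots,i_k}G^\pm(\zeta_{i_1},\dots,\zeta_{i_k})$ for the product test functions $G^\pm:=\prod_j g_j^\pm$, which are supported in $\prod_j[x_j-\kappa,x_j+\kappa]$ and satisfy $\|G^\pm\|_{C^{2k}}=O_k(\delta^{-2k})$. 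Hence Theorem~\ref{theorem:real:macro} gives
$$\E_{\ep_\mu}\sum_{i_1,\dots,i_k}G^\pm(\zeta_{i_1},\dots,\zeta_{i_k})=\E_{\Bg}\sum_{i_1,\dots,i_k}G^\pm(\zeta_{i_1},\dots,\zeta_{i_k})+O(\delta^{-2k}N^{-\kappa}),$$
so it remains to bound, \emph{in the Gaussian model}, the defects $\E_{\Bg}[\sum_i G^+(\dots)-\prod_j\CZ_j]$ and $\E_{\Bg}[\prod_j\CZ_j-\sum_i G^-(\dots)]$.

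Since $\sum_i g_j^+(\zeta_i)=\CZ_j+D_j^+$ with $0\le D_j^+\le\CZ_{J_j}$, and similarly for $g_j^-$, expanding the products shows each defect is a sum of $O_k(1)$ terms of the form $\E_{\Bg}\big[\prod_{j\in S}\CZ_{J_j}\prod_{j\notin S}\CZ_{I_j}\big]$ with $\varnothing\neq S\subseteq\{1,\dots,k\}$. Two estimates control these. First, for the Gaussian $F$ the covariance matrix of $(F(t),F'(t))$ is $\diag(1,\alpha(t))$ with $0<\alpha(t)=\tfrac{4\pi^2}{N}\sum_{\mu\in\CE_\lambda}\langle\mu,\gamma'(t)\rangle^2\le 4\pi^2 m$ for every $m$ — the off-diagonal entry vanishes because $\sum_{\mu\in\CE_\lambda}\mu=0$ — so the Kac--Rice first-moment formula yields $\E_{\Bg}\CZ_J=\tfrac1\pi\int_{J/\lambda}\sqrt{\alpha(t)}\,dt=O(|J|)$ for any $J\subseteq[0,\lambda]$; in particular $\E_{\Bg}\CZ_{J_j}=O(\delta)$ and $\P_{\Bg}(\CZ_{J_j}\ge1)=O(\delta)$, \emph{with no arithmetic hypothesis on $m$}. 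Second, for any prescribed $\eta,A>0$, Jensen's formula on the complex disk $B(x_j,1)$ together with the anti-concentration and boundedness conditions of Assumption~\ref{condition:universality} gives $\CZ_{B(x_j,1)}=O(N^\eta)$ off an event of probability $O(N^{-A})$, on which the tail condition of Assumption~\ref{condition:universality} contributes only $O(N^{-A})$; hence $\E_{\Bg}\CZ_{B(x_j,1)}^m=O(N^{m\eta})$ and $\E_{\Bg}\CZ_{J_j}^m=O(N^{m\eta}\delta)+O(N^{-A})$ for every fixed $m$. Feeding these into H\"older's inequality with $k$ factors bounds each term above by $O(N^{k\eta}\delta^{1/k})+O(N^{-A})$, so both Gaussian defects are $O(N^{k\eta}\delta^{1/k})+O(N^{-A})$.

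Collecting the two error sources, $\E_{\ep_\mu}\prod_j\CZ_j-\E_{\Bg}\prod_j\CZ_j=O(\delta^{-2k}N^{-\kappa})+O(N^{k\eta}\delta^{1/k})+O(N^{-A})$. Choosing $\eta>0$ small enough that $k^2\eta<\kappa/(4k)$, and then $\delta=N^{-\kappa/(4k)}$, makes both main terms $O(N^{-c})$ for some $c=c(k)>0$, proving the theorem for short intervals and hence in general. The step I expect to be the main obstacle — and where all the care goes — is exactly this balancing act: the smooth approximants to sharp indicators unavoidably have $C^{2k}$-norm as large as $\delta^{-2k}$, which inflates the universality error by a potentially large power of $N$, while enlarging $\delta$ increases the number of Gaussian roots lying near the endpoints of the $I_j$. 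The two demands are reconciled because the anti-concentration and boundedness conditions hold with an \emph{arbitrarily small} exponent $\eta$, so the a priori count $N^\eta$ can be rendered negligible against the fixed positive power of $N$ that survives the balancing, and because the first-moment Kac--Rice bound $\E_{\Bg}\CZ_{J_j}=O(\delta)$ supplies the needed power of $\delta$ unconditionally. (In the $\CB_2$ case one could alternatively invoke the local Kac--Rice estimates of \cite{RW,RWY} on cells avoiding the bad directions to obtain $\E_{\Bg}\CZ_{I_j}^m=O_m(1)$ outright, but this stronger input is not needed here.)
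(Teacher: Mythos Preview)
Your argument is correct and follows the same overall strategy as the paper's Section~\ref{smoothappx}: approximate the indicators $\oindicator{I_j}$ by smooth bumps with transition width $\delta=N^{-\beta}$, apply Theorem~\ref{theorem:real:macro} to the resulting product test function (incurring the $\delta^{-2k}N^{-\kappa}$ loss from the derivative bounds), and control the smoothing error in the Gaussian model via the Kac--Rice first-moment bound $\E_{\Bg}\CZ_{J}=O(|J|)$ together with the crude high-probability bound $\CZ_{B(x_j,1)}=O(N^{\eta})$ coming from Jensen's inequality and conditions~\eqref{smallballcond}--\eqref{tailex}.

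There is one genuine, if minor, difference worth recording. To bound the $k$th moment of the number of Gaussian roots in a short interval, the paper invokes the repulsion estimate Lemma~\ref{repulsionlmm} ($\P_{\Bg}(\CZ_{J}\ge 2)=O(|J|^{3/2})$), whereas you bypass it via the elementary inequality $\CZ_{J_j}^{m}\le N^{(m-1)\eta}\,\CZ_{J_j}$ on the event $\{\CZ_{B(x_j,1)}\le N^{\eta}\}$, combined with $\E_{\Bg}\CZ_{J_j}=O(\delta)$ and condition~\eqref{tailex} on the complement. This is a legitimate simplification: the repulsion lemma is not needed for Theorem~\ref{theorem:subinterval:1&2} itself (it reappears in the paper only in Section~\ref{pgreal} to pass from complex to real roots inside Theorem~\ref{theorem:real:macro}). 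Two small cosmetic points: the contribution of the bad event in condition~\eqref{tailex} is $O(N^{-c'})$ for the fixed $c'$ supplied by that condition, not $O(N^{-A})$ for arbitrary $A$ as you wrote---but since you only need some positive exponent this is harmless; and the vanishing of the off-diagonal covariance entry is not needed for the Kac--Rice upper bound you use (indeed $\mathcal S\le\mathcal P\mathcal Q$ regardless).
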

To prove Theorems \ref{theorem:universality1} and \ref{theorem:universality2}, it suffices to verify all of the conditions of Assumption \ref{condition:universality} for $H(x)$. We will do so in Section \ref{section:smooth} for Theorem \ref{theorem:universality1} and Sections \ref{section:smallball} and \ref{section:smallinterval} for Theorem \ref{theorem:universality2}. 

Finally, the deduction of Theorem \ref{theorem:subinterval:1&2} from Theorems \ref{theorem:universality1} and \ref{theorem:universality2} is given in Section \ref{smoothappx}. Theorems \ref{theorem:general} and \ref{theorem:general:d>1} will be concluded from Theorem \ref{theorem:subinterval:1&2} in Section \ref{section:proofcut}.

\section{Proof of Theorem \ref{theorem:universality1}: the smooth case}\label{section:smooth}

Because of  \eqref{cond-cont} of Condition \ref{cond-variable}, we have the following anti-concentration bound.

\begin{fact}\label{fact:anti-concentration} For any $t\in I$, and any $\delta>0$
$$\P(|F(t)| \le \delta) =O(\delta).$$
\end{fact}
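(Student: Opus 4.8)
The plan is to establish the quantitatively stronger and cleaner statement that $F(t)$ has a probability density bounded by a constant $C$ depending only on $K$ and on the law of $\eps_\mu$, uniformly in $t$ and in $m$; the bound $\P(|F(t)|\le\delta)\le 2C\delta=O(\delta)$ then follows at once. First I would use the saving $\eps_{-\mu}=\bar\eps_\mu$ to rewrite $F(t)$ from \eqref{def-F} as a linear combination of genuinely independent variables. Pairing $\mu$ with $-\mu$ (recall $0\notin\CE_\lambda$) and fixing a set of representatives $\CE_\lambda^+$ of size $N/2$, one checks that the $\mu$- and $(-\mu)$-summands coincide, so $F(t)=\sum_{j=1}^{N}e_jY_j$, where $Y_1,\dots,Y_N$ are i.i.d.\ copies of a mean-zero, variance-one random variable with density $\le K$, and the coefficients $e_j$ run over $\{\,2\cos(2\pi\langle\mu,\gamma(t)\rangle)/\sqrt N,\ 2\sin(2\pi\langle\mu,\gamma(t)\rangle)/\sqrt N:\mu\in\CE_\lambda^+\,\}$. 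Hence $|e_j|\le 2/\sqrt N$ for all $j$ while $\sum_{j}e_j^2=2$, and an elementary pigeonhole shows that the ``good'' set $S:=\{j:|e_j|\ge 1/\sqrt N\}$ satisfies $|S|\ge N/4$ and $\sum_{j\in S}e_j^2\ge 1$.

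Next I would bound the density of $F(t)$ by Fourier inversion. Writing $\phi(s):=\E e^{is\eps_\mu}$ for the common characteristic function, the characteristic function of $F(t)$ is $\theta\mapsto\prod_{j}\phi(e_j\theta)$, so the density of $F(t)$ is at most $\frac1{2\pi}\int_{\R}\prod_{j}|\phi(e_j\theta)|\,d\theta\le\frac1{2\pi}\int_{\R}\prod_{j\in S}|\phi(e_j\theta)|\,d\theta$. Fix $\eta>0$ small enough that $|\phi(s)|\le e^{-s^2/4}$ for $|s|\le 2\eta$ (possible since $\phi(s)=1-s^2/2+o(s^2)$ by mean-zero, variance-one). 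On the range $|\theta|\le\eta\sqrt N$ we have $|e_j\theta|\le 2\eta$ for every $j$, so $\prod_{j\in S}|\phi(e_j\theta)|\le e^{-(\theta^2/4)\sum_{j\in S}e_j^2}\le e^{-\theta^2/4}$, contributing $O(1)$ to the integral. On the range $|\theta|>\eta\sqrt N$ we have $|e_j\theta|>\eta$ for every $j\in S$; since $\eps_\mu$ is absolutely continuous it is non-lattice and $|\phi|\to 0$ at infinity (Riemann--Lebesgue), whence $\rho:=\sup_{|s|\ge\eta}|\phi(s)|<1$, and moreover $\phi\in L^2(\R)$ with $\|\phi\|_2^2=O(1)$ because $p\in L^1\cap L^\infty\subset L^2$ and Plancherel. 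Fixing two distinct good indices $j_1,j_2$, bounding $|\phi(e_j\theta)|\le\rho$ for the remaining good $j$, and using Cauchy--Schwarz together with a change of variables,
\[
\int_{|\theta|>\eta\sqrt N}\prod_{j\in S}|\phi(e_j\theta)|\,d\theta\ \le\ \rho^{|S|-2}\int_{\R}|\phi(e_{j_1}\theta)\phi(e_{j_2}\theta)|\,d\theta\ \le\ \rho^{|S|-2}\,\frac{\|\phi\|_2^2}{\sqrt{|e_{j_1}e_{j_2}|}}\ \le\ \rho^{N/4-2}\|\phi\|_2^2\sqrt N,
\]
which tends to $0$. Adding the two ranges yields the uniform density bound. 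For the range of $m$ with $N$ bounded one argues instead by conditioning on all but one coefficient of largest modulus $|e_{j^\ast}|\ge\sqrt{2/N}$ and invoking $\|p\|_\infty\le K$ directly.

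The step I expect to be the main obstacle is the tail range $|\theta|>\eta\sqrt N$: the naive estimate $\prod_{j\in S}|\phi(e_j\theta)|\le\rho^{|S|}$ is not integrable over an unbounded interval, so one must keep two of the $\Omega(N)$ good factors in order to produce an $L^1(\R)$ function before exploiting the exponential gain $\rho^{|S|}$ from the rest. This is exactly where both consequences of absolute continuity enter — that $|\phi|$ is bounded away from $1$ off the origin and decays at infinity, and that $\phi\in L^2$ — and hence where hypothesis \eqref{cond-cont} of Condition \ref{cond-variable} is indispensable; for a lattice (e.g.\ Bernoulli) $\eps_\mu$ the function $|\phi|$ is periodic and the argument breaks down, consistent with the remarks following Example \ref{example3d}. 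Finally I would record that all constants above ($\eta$, $\rho$, $\|\phi\|_2$, and the numerical fractions) depend only on $K$ and the fixed law of $\eps_\mu$, so the resulting density bound — and therefore the anti-concentration estimate — is uniform in $t\in I$, $\lambda$ and $N$, as needed for the later uses of Fact \ref{fact:anti-concentration}.
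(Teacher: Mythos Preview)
Your argument is correct and in fact proves more than the paper does. In the paper, Fact~\ref{fact:anti-concentration} is stated without proof as an immediate consequence of Condition~\ref{cond-variable}\eqref{cond-cont}; the only anti-concentration argument actually written out is the one in Section~\ref{sub:smallballcond:smooth}, which simply conditions on all variables except $\eps_{1,\mu_0},\eps_{2,\mu_0}$ for a single frequency $\mu_0$, uses that $\max(|\cos|,|\sin|)\gg 1$, and invokes the density bound $\|p\|_\infty\le K$ on that one variable. Because the surviving coefficient has size $\Theta(1/\sqrt N)$, this yields only $\P(|F(t)|\le\delta)=O(\sqrt N\,\delta)$ (the paper records $O(N\delta)$), which is all that is needed downstream: Conditions~\eqref{smallballcond} and~\eqref{tailex} of Assumption~\ref{condition:universality} only require a bound of the form $O(N^{O(1)}\delta)$.

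Your Fourier-analytic route is genuinely different and sharper: by keeping $\Omega(N)$ ``good'' coefficients and combining the Gaussian-type bound near the origin with the Riemann--Lebesgue/$L^2$ control of $\phi$ away from it, you obtain a density bound for $F(t)$ that is uniform in $N$, hence the clean $O(\delta)$. The trade-off is exactly as you identify: the paper's one-variable conditioning is a two-line argument but loses a power of $N$, while your approach requires the extra structure of absolute continuity (non-lattice, $\phi\in L^2$) to control the tail range $|\theta|>\eta\sqrt N$. Either suffices for the applications in this paper, since every use of anti-concentration (e.g.\ in Section~\ref{sub:tailexsmooth}) tolerates polynomial losses in $N$.
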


Our claim is that with very high probability all of the conditions from Assumption \ref{condition:universality} hold for the function $H$ given in \eqref{eqn:H(x)}. Note that Condition \eqref{analytic} of \ref{condition:universality} follows from our assumption on the analyticity of the curve $\gamma$.

\subsection{Verification of Condition \eqref{smallballcond}}\label{sub:smallballcond:smooth} For Condition \eqref{smallballcond}, if suffices to establish the bound for any $\mu_0\in \CE_\lambda$ and $x_0 \in J_i$. Again, as either $|\cos(2\pi \langle \mu_0, \gamma(x_0/\lambda) \rangle )|$ or $|\sin(2\pi \langle \mu_0, \gamma(x_0/\lambda) \rangle )|$ has order $\Theta(1)$, by the continuity of $\ep_\mu$, we have for any $\delta>0$,
\begin{eqnarray}
\P\left (|H(x_0)| \ge \delta\right ) &\ge& \inf_a \P\Big(\big |\eps_{1,\mu_0} \cos(2\pi \left \langle \mu_0, \gamma(x_0/\lambda) \right \rangle ) + \eps_{2,\mu_0} \sin(2\pi \langle \mu_0, \gamma(x_0/\lambda) \rangle ) + a\big |\ge N\delta \Big)\nonumber\\
&\ge& 1- O\left (N\delta\right )\label{anticoncentration1}.
\end{eqnarray}
Let $\delta = e^{-N^{c}}$, we obtain the desired estimate.

\subsection{Verification of Condition \eqref{boundedness}}\label{sub:doundedness:smooth}
For every $z\in [0, \lambda]\times [-1, 1]$, let $x = \Re(z)$. Since $\left \langle \mu, \gamma\left (\frac{x}{\lambda}\right )\right \rangle$ is real, we have
\begin{eqnarray}
\left |\Im   \left \langle \mu, \gamma\left (\frac{z}{\lambda}\right )\right \rangle\right |\le \left |\left \langle \mu, \gamma\left (\frac{z}{\lambda}\right )-\gamma\left (\frac{x}{\lambda}\right )\right \rangle\right | = O(1),
\end{eqnarray}
and so
\begin{eqnarray}
\left |\exp {\left (i2\pi  \left \langle \mu, \gamma\left (\frac{z}{\lambda}\right )\right \rangle\right ) }\right | = \exp {\left (-2\pi\Im   \left \langle \mu, \gamma\left (\frac{z}{\lambda}\right )\right \rangle\right ) } = O(1).\label{bound2}
\end{eqnarray}
Thus,
\begin{eqnarray}
|H(z)|=O(1)\sum_{\mu } |\ep_{\mu}|.\nonumber
\end{eqnarray}
By Markov's inequality, for any $M>0$,
\begin{equation}
\P\left (|H(z)| \ge M \mbox{ for some $z\in [0, T]\times [-1, 1]$}\right ) \le \P\left (\sum_{\mu } |\ep_{\mu}| =\Omega(M)\right )\le O\left (\frac{N}{M}\right ).\label{boundd1}
\end{equation}
Setting $M = e^{N^{c}}$, Condition \eqref{boundedness} then
follows. We remark that this condition holds even when $\ep_\mu$ has
discrete distribution.

\subsection{Verification of Condition \eqref{tailex}}\label{sub:tailexsmooth}
Let $K = \max_{z\in B(x, 1)}|H(z)|$. By Jensen's inequality,
\begin{eqnarray}
\CZ_{B(x, 1/2)}=O(1) \log \frac{K}{|H(x)|}\nonumber.
\end{eqnarray}
Thus,
$$\E \CZ_{B(x, 1/2)}^{k}\textbf{1}_{\mathcal A}\ll \E |\log K|^{k}\textbf{1}_{\mathcal A}+\E |\log |H(x)||^{k}\textbf{1}_{\mathcal A} .$$
By H\"older's inequality,
$$\E |\log K|^{k}\textbf{1}_{\mathcal A}\le \left (\E |\log K|^{2k}\right )^{1/2}\P(\mathcal A)^{1/2}.$$
By the bound \eqref{boundd1}, we obtain $\E |\log K|^{k} = O_{k}(N)$ which yields $$\E |\log K|^{k}\textbf{1}_{\mathcal A}=O_{k}\left (N^{-(A-1)/2}\right ).$$
We argue similarly for $ \E |\log |H(x)||^{k}\textbf{1}_{\mathcal A}$ using \eqref{anticoncentration1} (which is valid for all $\delta>0$). Letting $A = 2$, for example, we obtain the desired statement.

\subsection{Verification of Conditions \eqref{delocalization} and \eqref{Dev} for $g_\mu, h_\mu$} 
For Condition \eqref{delocalization}, note that for any $x\in (0,1)$ we have $\sum_{\mu} |g_\mu(x)|^2 + |h_\mu(x)|^2=N$, and so
$$\frac{|g_\mu(x)|+|h_\mu(x)|}{\sqrt{\sum_\mu g_\mu(x)^2 +h_\mu(x)^2 }} =O\left (\frac{1}{\sqrt{N}}\right ).$$
For \eqref{firstDev} of Condition \eqref{Dev}, 
%
we have
$$g_\mu'(x) = \frac{2\pi}{\lambda} \left \langle \mu, \gamma '(\frac{x}{\lambda})\right \rangle \cos\left (2\pi \left <\mu, \gamma \left (\frac{x}{\lambda}\right )\right >\right ).$$
Thus
$$\sum_\mu |g_\mu'(x)|^2 + \sum_\mu |h_\mu'(x)|^2 \ll \sum_{\mu} \frac{1}{\lambda^{2}} \left \langle \mu, \gamma '(\frac{x}{\lambda})\right \rangle^{2}\ll N$$
where the implicit constant depends on $\max_{x\in [0,\lambda]}
|\gamma'(\frac{x}{\lambda})|$. This proves 
\eqref{firstDev}. Finally, \eqref{secondDev} of Condition \eqref{Dev} is proven similarly using the same argument together with \eqref{bound2}.


In the remaining sections we will prove Theorem
\ref{theorem:universality2}. As we already seen, for this it suffices to verify
Condition \eqref{smallballcond} and Condition \eqref{tailex} of Assumption
\ref{condition:universality} only.

\section{Proof of Theorem \ref{theorem:universality2}: verification of Condition \eqref{smallballcond}}\label{section:smallball}

As the continuous case has been treated in Section \ref{section:smooth}, here we will assume 

\begin{itemize}
\item there exist positive constants $c_1,c_2,c_3$ and $K$ such that
$$\P(c_1\le |\ep_\mu - \ep_\mu'|\le c_2)\ge c_3$$
\item with probability one
$$|\ep_\mu|>1/K$$ 
\end{itemize}

Recall that $N=|\CE_\lambda|$. Without scaling, we will show the following which implies Condition \eqref{smallballcond} of Assumption \ref{condition:universality}.

\begin{theorem}\label{theorem:smallball} Let $A>0$ be a fixed constant, then there exists a constant $C=C(A)$ such that the following holds for $F(t)$ from \eqref{eqn:F}: for any interval $I\subset [0,1]$ of length $c_0/\lambda$, for any $t_1, t_2\in I$ with $\|\gamma(t_1)-\gamma(t_2)\| = \frac{N^{-\alpha}}{\lambda}$, we have
$$\P(\left |F(t_1)\right | \le N^{-C}) \le N^{-A}\quad\mbox{or}\quad \P(\left |F(t_2)\right | \le N^{-C}) \le N^{-A}.$$
\end{theorem}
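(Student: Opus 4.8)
The plan is to exploit the two points $t_1,t_2$ that are separated by the macroscopic-at-scale distance $N^{-\alpha}/\lambda$ (which is guaranteed to exist by the remark following Condition \ref{cond-curve}, since a short arc cannot collapse into a tiny ball) and to run an anti-concentration argument for one of the two single-variable random sums $F(t_1)$ or $F(t_2)$. The point is that $F(t_1)$ and $F(t_2)$ are each linear combinations $\frac{1}{\sqrt N}\sum_\mu \ep_{1,\mu}\cos(\cdot) + \ep_{2,\mu}\sin(\cdot)$ of the independent coefficients; a Littlewood--Offord / Erd\H{o}s-type small-ball inequality then says that $\P(|F(t_j)|\le \delta)$ is small provided the vector of coefficients $(\cos 2\pi\langle\mu,\gamma(t_j)\rangle, \sin 2\pi\langle\mu,\gamma(t_j)\rangle)_{\mu}$ is not too concentrated --- quantitatively, provided many of its entries are bounded away from $0$ and the relevant ``least common denominator'' of the coefficient vector is large. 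The hypothesis $\P(c_1\le|\ep_\mu-\ep_\mu'|\le c_2)\ge c_3$ is exactly what feeds such an inequality; combined with $|\ep_\mu|>1/K$ a.s. it also controls the symmetrized sum.

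First I would record the basic anti-concentration input: if $a_1,\dots,a_M$ are real with $\sum_{j\in S}a_j^2 \ge \sigma^2$ for some index set $S$ whose entries additionally satisfy a spreading condition, then for $\ep_j$ i.i.d. with $\P(c_1\le|\ep_j-\ep_j'|\le c_2)\ge c_3$ one has $\P(|\sum_j a_j\ep_j - b|\le\delta)\le (\delta/\sigma + N^{-\Omega(1)})$ uniformly in $b$; this is a standard consequence of the Berry--Ess\'een / Esseen concentration function estimate or of a tensorization argument over $\Omega(N)$ coordinates. Second --- and this is where the separation of $t_1,t_2$ enters --- I would show that for \emph{at least one} of $j=1,2$ the coefficient vector of $F(t_j)$ is genuinely spread out at polynomial scale: I will argue by contradiction that if both $F(t_1)$ and $F(t_2)$ had their coefficient vectors trapped in a low-complexity structure (few short intervals, or a large-denominator failure), then the difference $\langle\mu,\gamma(t_1)-\gamma(t_2)\rangle$ for $\mu\in\CE_\lambda$ would be forced into too small a set, contradicting Assumption \ref{assumption:equi} applied with $r = \gamma(t_1)-\gamma(t_2)$ (note $|r|=N^{-\alpha}/\lambda$, so after rescaling $r$ this is precisely the vector-length regime that Assumption \ref{assumption:equi} addresses, modulo the $N^{-\alpha}$ factor which only shrinks the covering intervals). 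Equivalently: $\cos 2\pi\langle\mu,\gamma(t_1)\rangle$ and $\cos 2\pi\langle\mu,\gamma(t_2)\rangle$ cannot \emph{both} be close to $\pm1$ (resp.\ both near a common bad rational structure) for almost all $\mu$, because that would pin down $\langle\mu, r\rangle$ on a sparse set.

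Third, once I have fixed $j\in\{1,2\}$ for which the coefficient vector of $F(t_j)$ has $\Omega(N)$ entries of size $\gg 1$ and large least common denominator ($\ge N^{B}$ for a suitable $B=B(A)$), I combine the two inputs: the small-ball inequality gives $\P(|F(t_j)|\le N^{-C}) \le N^{-A}$ once $C$ is chosen large enough relative to $A$ and $B$. Throughout I would condition first on one of the two i.i.d.\ families $\{\ep_{1,\mu}\}$ or $\{\ep_{2,\mu}\}$ so that the other provides fresh randomness, and use the a.s.\ lower bound $|\ep_\mu|>1/K$ to ensure the conditioned sum still has a large spread. The main obstacle is the second step: transferring the arithmetic non-concentration statement of Assumption \ref{assumption:equi} into the analytic statement ``the coefficient vector $(\cos, \sin)_\mu$ has large LCD (for one of the two points)''. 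This requires a careful pigeonhole: a failure of large-LCD at $t_1$ means most residues $\langle\mu,\gamma(t_1)\rangle$ lie near a rational lattice with small denominator $q_1$, similarly $q_2$ at $t_2$; then $\langle\mu,r\rangle$ lies near the difference lattice, of denominator $\mathrm{lcm}(q_1,q_2)$ which is still polynomially bounded, and this covers $\{\langle\mu,r\rangle\}$ by few short intervals --- contradiction. Making the ``near'' quantitative (errors of size $N^{-C}$ must not accumulate over $N$ terms) is the delicate bookkeeping, but it is forced by choosing $C$ large and $\alpha$ from the geometry of $\gamma$, and the deterministic separation Lemma \ref{lemma:separation:d=2} (valid for almost all $m$) controls the denominators $q_1,q_2$.
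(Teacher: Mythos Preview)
Your overall architecture is right and matches the paper: assume both small-ball probabilities exceed $N^{-A}$, deduce that the set $\{\langle\mu,\gamma(t_1)-\gamma(t_2)\rangle : \mu\in\CE_\lambda\}$ is trapped in too few short intervals, and contradict Assumption~\ref{assumption:equi}. But the mechanism you propose for the middle step has a genuine gap.

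The problem is the inference ``failure of large LCD at $t_j$ $\Rightarrow$ most residues $\langle\mu,\gamma(t_j)\rangle$ lie near a rational lattice with small denominator $q_j$.'' This is not what an LCD (or Esseen/Berry--Ess\'een) bound gives you. Small LCD of the coefficient vector $(\cos 2\pi\alpha_\mu)_\mu$ means there is some real $\theta$ of moderate size with $\theta\cos 2\pi\alpha_\mu$ all near integers; it does \emph{not} force the angles $\alpha_\mu=\langle\mu,\gamma(t_j)\rangle$ themselves to be near rationals with a common small denominator. Consequently your ``difference lattice with denominator $\mathrm{lcm}(q_1,q_2)$'' step has no footing, and the appeal to Lemma~\ref{lemma:separation:d=2} (which concerns the separation of lattice points $\mu$ on the circle $|\mu|=\lambda$, not denominators of angles) does not repair it.

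What the paper actually does at this step is substantially deeper. From $\rho_\beta \ge N^{-A}$ at $t_j$ it invokes the Nguyen--Vu inverse Littlewood--Offord theorem (Theorem~\ref{theorem:ILO}) to conclude that most of the unit complex numbers $z_\mu(t_j)=e^{2\pi i\langle\mu,\gamma(t_j)\rangle}$ are $O(\beta)$-close to a generalized arithmetic progression $P(t_j)$ of rank $O(1)$ and size $N^{O_A(1)}$. Then the products $z_\mu(t_1)\bar z_\mu(t_2)=e^{2\pi i\langle\mu,r\rangle}$ lie on the unit circle and are $O(\beta)$-close to the product GAP $P(t_1)\overline{P(t_2)}$. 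The decisive ingredient you are missing is a sum--product-type theorem of Chang (Theorem~\ref{theorem:circle} in the paper, proved via an effective Nullstellensatz): a bounded-rank GAP of dimension $M$ can contain at most $\exp(C_r\log M/\log\log M)$ points that are $\delta$-separated and $\eps$-close to the unit circle. This is what converts ``additive (GAP) structure on $e^{2\pi i\langle\mu,r\rangle}$'' into ``the angles $\langle\mu,r\rangle$ are covered by few short intervals,'' yielding the contradiction with Assumption~\ref{assumption:equi}. An LCD/rational-lattice pigeonhole cannot substitute for this step.
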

Note that by the remark after Condition \ref{cond-curve}, for any interval $I$ of length $c_0/\lambda$, there exist $t_1, t_2\in I$ with $\|\gamma(t_1)-\gamma(t_2)\| = \frac{N^{-\alpha}}{\lambda}$.

It is clear that Condition \eqref{smallballcond} of Assumption \eqref{condition:universality} follows immediately where the sub-exponential lower bound can be replaced by polynomial bounds. To prove Theorem \ref{theorem:smallball} we will rely on two
results on additive structures. We say a set $S\subset\mathbb C$ is $\delta$-{\it separated} if for any $s_1, s_2\in S$, $|s_1-s_2|\ge\delta$, and $S$ is $\eps$-{\it
close} to a set $P$ if for all $s\in S$, there exists $p\in P$ such
that $|s-p|\le \eps$.

 Define a \emph{generalized arithmetic progression} (or GAP) to be a finite subset $Q$ of $\C$ of the form
$$ Q = \{ g_0+a_1 g_1 + \dots + a_r g_r: a_i \in \Z, |a_i| \leq N_i \hbox{ for all } i=1,\dots,r\}$$
where $r \geq 0$ is a natural number (the \emph{rank} of the GAP),
$N_1,\dots,N_r > 0$ are positive integers (the \emph{dimension lengths}, or \emph{dimension} for short, of
the GAP), and $g_0, g_1,\dots,g_r \in \C$ are complex numbers (the
\emph{generators} of the GAP).  We refer to the quantity
$\prod_{i=1}^r (2N_i+1)$ as the \emph{volume} $\operatorname{vol}(Q)$
of $Q$; this is an upper bound for the cardinality $|Q|$ of $Q$. When
$g_0=0$, we say that $Q$ is {\it symmetric}. When $\sum_i a_ig_i$ are
all distinct, we say that $Q$ is {\it proper}.

Let $\xi$ be a real random variable, and let $V = \{v_1, . . . , v_n\}$
be a multi-set in $\R^{d}$. For any $r >0$, we define the small ball probability as
$$\rho_{r,\xi}(V) := \sup_{x\in \R^{d}}\P\left (v_1 \xi_1 + \dots + v_n \xi_n \in B(x, r)\right )$$
where $\xi_1, . . . , \xi_n$ are iid copies of $\xi$, and $B(x, r)$ denotes the closed disk of radius r centered at
$x$ in $\R^{d}$.

\begin{theorem}\cite[Theorem 2.9]{NgV}\label{theorem:ILO} Let $ A>0$
  and $1/2>\eps_0>0$ be constants. Let
$ \beta >0$ be a parameter that may depend on $n$. Suppose that $V=\{v_1,\dots,v_n\}$ is a (multi-) subset of $\R^d$ such that $\sum_{i=1}^n\|v_i\|^2=1$ and that $V$ has large small ball probability
$$\rho:= \rho_{\beta,\xi}(V)\ge n^{-A}, $$ where $\xi$ is a real  random variable satisfying Condition \ref{cond-variable}. Then the following holds: for any number $ n^{\eps_0} \le n'  \le n$, there exists a proper symmetric GAP $Q=\{\sum_{i=1}^r x_ig_i : |x_i|\le L_i \}$ such that

\begin{itemize}
\item At least $n-n'$ elements of $V$  are $O(\beta)$-close to $Q$.
\vskip .05in
\item  $Q$ has constant rank $d \le r=O (1)$, and cardinality
$$|Q| =O(\rho^{-1} n'^{(-r+d)/2}).$$



\end{itemize}
\end{theorem}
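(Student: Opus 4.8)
The plan is to run the Fourier-analytic inverse Littlewood--Offord machinery of Hal\'asz--Tao--Vu type, in the sharp form of \cite{NgV}, converting the hypothesis ``the small-ball probability $\rho$ is large'' into ``a dual level set in $\R^d$ has large measure'', and then extracting the GAP from that set by geometry of numbers. As a first step I would record the only distribution-dependent input. After the standard symmetrization $\xi_i \mapsto \xi_i - \xi_i'$, the symmetrized characteristic function $\psi(\theta) = |\E e^{i\theta\xi}|^2 = \E\cos\big(\theta(\xi-\xi')\big)$ lies in $[0,1]$ and satisfies a quantitative decay bound $\psi(\theta) \le \exp\big(-c\,\rho_0(\theta)\big)$: in the continuous case \eqref{cond-cont} one may take $\rho_0(\theta) = \min(1,\theta^2)$, using $\Var\xi = 1$ near the origin together with continuity plus Riemann--Lebesgue (hence non-degeneracy) of $\phi_\xi$ away from it; in the mixed case \eqref{cond-discrete} the hypothesis $\P(c_1 \le |\xi-\xi'| \le c_2) \ge c_3$ gives a comparable, lattice-aware lower bound for $1 - \psi(\theta) = \E\big(1 - \cos\theta(\xi-\xi')\big)$. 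This verification in case \eqref{cond-discrete} is elementary but must be made uniform in $c_1,c_2,c_3,K$.

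\textbf{Reduction to a dual set.} Applying Esseen's concentration inequality to $S = \sum_{i=1}^n v_i\xi_i \in \R^d$ and using $|\E e^{i\langle w,S\rangle}| = \prod_i \psi(\langle w,v_i\rangle)^{1/2}$ together with the decay bound,
$$\rho \;\ll\; \beta^{d}\int_{B(0,\,C/\beta)} \prod_{i=1}^n \psi\big(\langle w,v_i\rangle\big)^{1/2}\,dw \;\le\; \beta^{d}\int_{B(0,\,C/\beta)} \exp\!\Big(-c\sum_{i=1}^n \rho_0\big(\langle w,v_i\rangle\big)\Big)\,dw .$$
Decomposing $B(0,C/\beta)$ dyadically according to the value of $f(w) := \sum_i \rho_0(\langle w,v_i\rangle)$ and using $\rho \ge n^{-A}$, pigeonholing produces a threshold $K = O_A(\log n)$ for which the dual set
$$T := \Big\{ w \in B(0,C/\beta):\ \sum_{i=1}^n \rho_0\big(\langle w,v_i\rangle\big) \le K \Big\}$$
has measure $|T| \gg \rho\,\beta^{-d}/\mathrm{polylog}(n)$; equivalently, for many directions $w$ in the ball all the inner products $\langle w,v_i\rangle$ are simultaneously near the exceptional set of $\rho_0$ --- an ellipsoid condition $\sum_i \langle w,v_i\rangle^2 \ll K$ in case \eqref{cond-cont}, a near-lattice condition in case \eqref{cond-discrete}.

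\textbf{Inverse step and the $n'$ trade-off.} After reducing to the case that $V$ spans $\R^d$ (which forces $r \ge d$), the goal is to deduce from the largeness of $T$ that all but $n'$ of the $v_i$ lie within $O(\beta)$ of a proper symmetric GAP $Q$ of rank $r = O(1)$ and cardinality $|Q| = O(\rho^{-1} n'^{(d-r)/2})$. One argues by duality: a symmetric, approximately convex set $T$ of the stated volume contains --- via John's theorem followed by Minkowski's second theorem --- a proper symmetric GAP $P$ of comparable volume; since $\langle w,v_i\rangle$ is (nearly) integral for $w \in P$, each $v_i$ is confined to the GAP $Q$ dual to $P$, and a duality bound of the shape $\Vol(P)\cdot|Q| \ll \beta^{-d}$ combined with $\Vol(P) \gg |T| \gg \rho\,\beta^{-d}/\mathrm{polylog}(n)$ gives $|Q| \ll \mathrm{polylog}(n)/\rho$. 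The polylogarithmic overhead is then removed by the characteristic $n'$-device of \cite{NgV}: one discards the $n - n'$ vectors $v_i$ whose contributions $\rho_0(\langle w,v_i\rangle)$ dominate $f$, which both sharpens the level-set estimate and lowers the effective rank of the dual object; bootstrapping this (a tensor-power--type argument on $T$) yields the constant rank $r = O(1)$ and the announced cardinality $|Q| = O(\rho^{-1} n'^{(d-r)/2})$.

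\textbf{Main obstacle.} I expect the inverse step to be the crux: passing from ``$T$ is large'' to the \emph{sharp} structural conclusion, with volume exactly $O(\rho^{-1} n'^{(d-r)/2})$ rather than with the $n^{o(1)}$ or polylogarithmic losses of earlier inverse Littlewood--Offord theorems. This requires interleaving the geometry-of-numbers extraction of a GAP inside $T$ with the $n'$-truncation bookkeeping, and arranging the dimension reduction so that the output GAP has rank matching the reduced dimension of $V$; the characteristic-function estimate of the first step in the mixed case \eqref{cond-discrete} is a more routine secondary point.
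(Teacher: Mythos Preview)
The paper does not prove this statement: Theorem~\ref{theorem:ILO} is quoted verbatim from \cite[Theorem~2.9]{NgV} and used as a black box in the proof of Theorem~\ref{theorem:smallball}. There is therefore no ``paper's own proof'' to compare your proposal against.

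That said, your sketch is a faithful outline of the Nguyen--Vu argument in \cite{NgV}: Esseen's inequality, passage to a dual level set, and extraction of a GAP via geometry of numbers, with the $n'$-truncation device to remove logarithmic losses. If your intent is to reproduce the proof of the cited external result, the skeleton is correct, though as you yourself flag, the sharp volume bound $|Q| = O(\rho^{-1} n'^{(d-r)/2})$ without polylogarithmic losses is the substantive part and your ``bootstrapping/tensor-power'' description of it is too vague to count as a proof. But for the purposes of the present paper none of this is needed: the result is simply imported.
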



For Theorem \ref{theorem:smallball}, first fix $t\in I$, and
let $x=\gamma(t)$. Set $\beta= N^{-C}$, with $C$ sufficiently large to be chosen, and assume that
\begin{equation}\label{eqn:anti:t}
\P \left (\left |\sum_{\mu\in \CE_\lambda} \eps_{1,\mu} \cos (2\pi \langle \mu,
x\rangle) +  \eps_{2,\mu} \sin (2\pi \langle \mu, x\rangle)\right |\le
\beta \right ) \ge N^{-A}.
 \end{equation}

We will choose $\ep_0$ to be the constant in Assumption \ref{assumption:equi}. Then by Theorem \ref{theorem:ILO} (applied to the sequences $\{\cos
(2\pi \langle \mu, x\rangle), \mu \in \CE_\lambda\}$ and $\{\sin (2\pi
\langle \mu, x\rangle), \mu \in \CE_\lambda\}$ separately with $N'=N^{\eps_0}$), there exist
proper GAPs $P_1,P_2 \subset \R$ and $|\CE_\lambda|-2N'$ indices $\mu \in \CE_\lambda$
such that with $z_\mu(t)=\cos (2\pi \langle \mu, x\rangle) + i  \sin
(2\pi \langle \mu, x\rangle)=\exp(2\pi i \langle \mu, \gamma(t)\rangle)$,
$$ \dist (z_\mu(t), P_1 + iP_2) \le 2 \beta $$
and such that the cardinalities of $P_1$ and $P_2$ are $O\left (N^{O_A(1)}\right )$ and the ranks are $O(1)$. The properness implies that the dimensions of the GAPs $P_1$ and $P_2$ are bounded by $O\left (N^{O_A(1)}\right )$.


For short, we denote the complex GAP $P_1+i P_2$ by $P(t)$.

Now assume for contradiction that \eqref{eqn:anti:t} holds for both $t = t_1$ and $t=t_2$. 
By applying the above process to $t_1$ and $t_2$, we obtain two GAPs
$P(t_1)$ and $P(t_2)$ which are $2\beta$-close to
the points $z_\mu(t_1)$ and $z_\mu(t_2)$ respectively for at least
$N-4N^{\eps_0}$ indices $\mu$.

Since the $z_\mu(t_1)$ and $z_\mu(t_2)$ have magnitude 1, the
product set
$P(t_1)  \bar{P}(t_2)=\{p_1 \bar{p_2}, p_1\in P_1(t), p_2 \in P_2(t)\}$ will $O(\beta
)$-approximate the points $z_\mu=z_\mu(t_1) \bar{z}_\mu(t_2) =
\exp(2\pi \langle \mu, \gamma(t_1)-\gamma(t_2)\rangle)$  for at least
$N-4N^{\eps_0}$ indices $\mu$. Let $\mathcal S$ be the collection of these points $z_\mu$.

By definition, $P=P(t_1) \bar{P}(t_2)$ is another GAP whose rank is $O(1)$ and dimensions are of order $O\left (N^{O_A(1)}\right )$.

Now we look at the set $\mathcal S$. 
 On one hand, $\mathcal S$ is ``stable" under multiplication in the sense that $|z_{\mu_1} z_{\mu_2}|=1$ for all $\mu_1,\mu_2$. On the other hand, as $z_{\mu}$ can be well approximated by elements of a GAP of small size, the collection of sums $z_{\mu_1} + z_{\mu_2}$ can also be approximated by another GAP  of small size. Roughly speaking, in line of the "sum-product" phenomenon in additive combinatorics \cite{ESz}, this is only possible if the GAP sizes are extremely small.  Rigorously, we will need the following continuous analog of a result by the first author \cite{C}.

\begin{theorem}\label{theorem:circle} Let $P =\{g_0+\sum_{i=1}^r n_i g_i: |n_i|< M\}$ be a generalized arithmetic progression of rank $r$ on the complex plane. Then there exists an (explicit) constant $C_r$ with the following property. Let $0<\delta <1$ and $\eps < M^{-C_r} \delta^{C_r}$ and let $S \subset P$ be a subset consisting of elements  which are $\delta$-separated and $\eps$-close to the unit circle, then
$$S \le \exp(C_r \log M/\log \log M).$$
\end{theorem}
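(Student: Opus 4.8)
The plan is to prove a \emph{sum-product}-type dichotomy: a subset $S$ of a low-rank GAP $P$ that is both $\delta$-separated and $\eps$-close to the unit circle must be small, because the circle is multiplicatively ``curved'' and cannot contain too much of an additively structured set. I would first reduce to the case where $P$ is symmetric ($g_0 = 0$) and proper, at the cost of enlarging the rank and the dimension lengths by bounded factors; replacing $M$ by $M^{O_r(1)}$ changes the final bound only by an absolute constant in $C_r$. I would also normalise so the generators $g_1,\dots,g_r$ are linearly independent over $\Q$ if possible, or else pass to a genuinely proper sub-structure; the point is that every element of $S$ now has a \emph{unique} representation as an integer combination $\sum n_i g_i$ with $|n_i| < M^{O_r(1)}$.

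The heart of the argument is the following observation. Since each $s \in S$ satisfies $\big||s| - 1\big| \le \eps$, writing $s = 1 + u$ with $|u|$ small (after rotating so that a fixed element of $S$ is near $1$, or more robustly working locally on short circular arcs), the constraint $s\bar s \approx 1$ is a nontrivial \emph{quadratic} relation among the coordinates $n_i$. Concretely, for $s, s' \in S$ lying in a common short arc, the quantity $s - s'$ is nearly tangent to the circle, so $\Re\big(\bar s (s - s')\big) = O(\eps + |s-s'|^2)$; combined with $\delta$-separation this forces the tangential differences to be spread out while the normal component is tiny. I would then invoke Freiman-type / Plünnecke--Ruzsa machinery inside the GAP: the difference set $S - S$ lies in a GAP of rank $O_r(1)$ and volume $M^{O_r(1)}$, and the near-circularity means $S-S$ is trapped in an $O(\eps^{1/2})$-neighbourhood of a line (the tangent), so in \emph{one} of the $r$ ``dimensions'' of a suitable coordinatisation the projection of $S$ has size $O(M \eps^{1/2} + 1)$. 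Iterating this — peeling off one tangent direction at a time, as in \cite{C} — each step either already bounds $|S|$ or reduces to a GAP of strictly smaller rank governing the remaining freedom, and one tracks how $\eps$ must shrink relative to $M$ at each stage. The hypothesis $\eps < M^{-C_r}\delta^{C_r}$ is exactly what makes all $r$ peeling steps valid simultaneously, and the accumulated loss is multiplicative in $M^{O(1/\log\log M)}$ per step, giving the stated bound $|S| \le \exp\big(C_r \log M / \log\log M\big)$ after $O_r(1)$ steps.

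The main obstacle, and the place where the continuous analog genuinely differs from \cite{C}, is controlling the \emph{approximation errors} through the Freiman isomorphism / dimension-reduction step: in the discrete (say, finite field or $\Z$) setting one has exact algebraic relations, whereas here every identity holds only up to $O(\eps)$ or $O(\delta)$, and these errors are amplified by the (possibly huge, $M^{O_r(1)}$) integer coefficients when one passes between the coordinates $n_i$ and the actual complex values. The way I would handle this is to choose, at the start, a rescaled lattice in which $\delta$-separation guarantees that the integer coefficients and the geometric distances are comparable up to $M^{O_r(1)}$ factors, so that the condition $\eps < M^{-C_r}\delta^{C_r}$ with $C_r$ large absorbs every amplification; essentially $C_r$ must dominate the sum of all the exponent losses incurred across the $O_r(1)$ reduction steps, which is a finite bookkeeping computation. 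A secondary technical point is ensuring the GAP stays \emph{proper} after each projection — I would either cite a standard GAP-regularisation lemma (every GAP of rank $r$ and volume $V$ contains, and is contained in, a proper one of rank $\le r$ and volume $V^{O_r(1)}$) or build properness into the induction hypothesis.
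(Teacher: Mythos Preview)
Your proposal has a genuine gap in the core mechanism. You describe a geometric ``tangent-line peeling'' argument: localise $S$ to a short arc, observe that $S-S$ is nearly tangential there so that one coordinate projection is small, and iterate to reduce the GAP rank. But this conflates the rank $r$ of the GAP (the number of generators $g_i$) with the real dimension of the ambient space $\C\cong\R^2$. The tangent and normal directions live in $\R^2$, and projecting onto them does not single out one of the $r$ integer coordinates $n_i$; moreover the tangent direction varies along the circle, so there is no global coordinate to peel off. Even granting some local version, nothing in your sketch produces the per-step factor $M^{O(1/\log\log M)}$ you assert---that specific sub-polynomial shape does not emerge from Freiman--Ruzsa machinery or tangent-line geometry, and without it you cannot reach the bound $\exp(C_r\log M/\log\log M)$. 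What you cite as ``as in \cite{C}'' is not in fact what \cite{C} does.

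The paper's argument is structurally different and the key step you are missing is \emph{algebraic}. Rather than approximate the circle by tangent lines, the paper uses the effective Nullstellensatz (Theorem~\ref{theorem:KPS}) to show that the approximate constraints $\big|\,|g_0+\sum n_ig_i|^2-1\,\big|\le\eps$ for all $\bar n\in\CF$ force the existence of complex parameters $z_0,\dots,z_r,w_0,\dots,w_r$ with $z_1\neq 0$ satisfying the \emph{exact} identity $(z_0+\sum n_iz_i)(w_0+\sum n_iw_i)=1$ for every $\bar n\in\CF$ (Lemma~\ref{lemma:wz}); the hypothesis $\eps<M^{-C_r}\delta^{C_r}$ is spent precisely to defeat the height bounds in the Nullstellensatz certificate. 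Once this exact multiplicative relation inside a mixed GAP is in hand, Proposition~3 of \cite{C}---ultimately a divisor-function estimate---bounds the number of distinct values $z_0+\sum n_iz_i$ by $\exp(D_r\log M/\log\log M)$, and each level set of this map sits in a GAP of rank $r-1$, so induction on $r$ finishes. The $\log M/\log\log M$ exponent is number-theoretic in origin, not geometric.
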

To complete the proof of Theorem \ref{theorem:smallball}, we apply Theorem \ref{theorem:circle} 
with $\eps = O(\beta)$, $r=O_A(1)$, and $M=O(N^{O_A(1)})$
to conclude that the set $\mathcal S$ can be covered by $\exp(C_r \log N/\log \log N)$
disks of radius $\delta$ with $\delta= M \eps^{1/C_r}$. Taking into account at most $4N^{\ep_0}$ elements $z_\mu$ not included in $\mathcal S$, the set $ \{\langle
\mu,  \gamma(t_1)-\gamma(t_2) \rangle\}_{\mu\in \mathcal E}$ can be covered by $4N^{\ep_0}+\exp(C_r \log
N/\log \log N)\le 5N^{\ep_0}$ intervals of length $O(\delta)$. However, note that
$$\delta = M \eps^{1/C_r} =O\left (N^{-C/C_r + O_A(1)}\right ).$$
By choosing $C$ sufficiently large, this would contradict with the
equi-distribution assumption \ref{assumption:equi} on $\CE$.





For the rest of this section we will justify  Theorem \ref{theorem:circle}. In this proof, $C_r$ is  a constant depending on $r$ and may vary even within the same context. We denote the set of the coefficient vectors of $S$ by
$$\CF = \left \{\bar{n} = (n_1,\dots, n_r) \in \mathbb Z^r : |n_i|<M, g_0+\sum_{i=1}^r n_ig_i \in S\right\}.$$
Fix $\bar m\in \CF$. Since $g_0+\sum_{i=1}^r m_ig_i$ is $\ep$-close to the unit circle, we have $|g_0+\sum_{i=1}^r m_ig_i|\le 1+\ep$ and 

\begin{equation}\label{circle:2}\bigg|\sum_{i=1}^r(n_i-m_i)g_i\bigg|\le 2(1+\eps) \;\; \text{
  for all  } \bar n\in\mathcal F.
\end{equation}

Let $\langle\mathcal F -\bar m \rangle$ be the vector space generated by $\bar{n} -\bar{m}, \bar{n}\in \CF$. We assume $\dim\langle \mathcal F- \bar{m}\rangle=r$, since otherwise we may reduce the rank of $P$ without significantly changing the size of $P$
 (see \cite[Chapter 3]{TVbook}).

 Therefore, we can take $r$ independent vectors $\bar n^{(1)},\cdots, \bar n^{(r)}\in\mathcal F$ and use Cramer's rule to solve $g_1, \cdots, g_r$ in the following system of $r$ equations.
$$\begin{aligned}
(n_1^{(1)}-m_1)g_1+&\cdots+ (n_r^{(1)}-m_r)g_r =c^{(1)}\\
&\cdots\\
&\cdots\\
&\cdots\\
(n_1^{(r)}-m_1)g_1+&\cdots+ (n_r^{(r)}-m_r)g_r =c^{(r)}\end{aligned}$$
where $|c^{(1)}|,\cdots, |c^{(r)}|\le 2 (1+\eps)<3$.




We obtain a bound

\begin{equation}\label{circle:3}|g_1|,\dots, |g_r| \le 3.2^{r} r! M^{r-1},
\end{equation}
and hence

\begin{equation}\label{circle:4}
|g_0| < \sum_i |n_i g_i| +1 +\eps < (3r)2^{r} r! M^r.
\end{equation}

Next, assume that $|\CF|\ge 2$. Then the separation assumption means
that for any $\bar{m}, \bar{n}\in \CF$ with $\bar{m} \neq \bar{n}$ we
have $|\sum_{i=1}^r (m_i-n_i)g_i| >\delta$. Thus,

\begin{equation}\label{circle:5}
\max\{|g_1|,\dots, |g_r|\} > \frac{\delta}{2r M}.
\end{equation}

Without loss of generality, assume that the maximum above is attained by $|g_1|$.

\smallskip

\begin{lemma}\label{lemma:wz}There exist $z_0,z_1,\dots, z_r, w_0,w_1,\dots, w_r \in \mathbb C$ with $z_1 \neq 0$ such that for any $\bar{n} \in \CF$
$$\Big(z_0+\sum_{i=1}^r n_i z_i\Big)\Big(w_0+\sum_{i=1}^r n_i w_i\Big)=1.$$
\end{lemma}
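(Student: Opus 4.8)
The plan is to read the desired identity as a statement that $\CF$ lies on a degenerate quadric. In homogeneous coordinates $\mathbf n=(1,n_1,\dots,n_r)$, a pair of complex affine forms $L_1(\bar n)=z_0+\sum_i n_iz_i$, $L_2(\bar n)=w_0+\sum_i n_iw_i$ satisfies $L_1L_2\equiv1$ on $\CF$ exactly when the complex symmetric matrix $G=\tfrac12(\mathbf z\mathbf w^{\mathsf T}+\mathbf w\mathbf z^{\mathsf T})$ — which automatically has rank $\le2$ — satisfies $\mathbf n^{\mathsf T}G\mathbf n=1$ for all $\bar n\in\CF$; conversely, by a Takagi-type factorization every complex symmetric $G$ of rank $\le2$ arises in this way, and if its index-$1$ row $Ge_1$ is nonzero then $z_1,w_1$ do not both vanish, so after possibly interchanging $L_1$ and $L_2$ one gets $z_1\ne0$. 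Thus it suffices to produce a complex symmetric $G$ of rank $\le2$ with $\mathbf n^{\mathsf T}G\mathbf n=1$ on $\CF$ and $Ge_1\ne0$.

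An \emph{approximate} such $G$ is provided by the generators. Writing $g_j=a_j+\sqrt{-1}\,b_j$ and $L(\bar n)=g_0+\sum_{i\ge1}n_ig_i$, the real positive semidefinite matrix $H=\mathbf a\mathbf a^{\mathsf T}+\mathbf b\mathbf b^{\mathsf T}$ has rank $\le2$ and $\mathbf n^{\mathsf T}H\mathbf n=|L(\bar n)|^2$; since each element of $S$ is $\eps$-close to the unit circle, $|\mathbf n^{\mathsf T}H\mathbf n-1|\le3\eps$ on $\CF$, and by \eqref{circle:5} together with $|g_1|=\max_i|g_i|$ the row $He_1$ has norm $\gtrsim(\delta/M)^2$. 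I would then correct $H$: let $\mathcal A$ be the affine space of symmetric matrices $G$ with $\mathbf n^{\mathsf T}G\mathbf n=1$ on $\CF$ (which is nonempty, since the $(0,0)$-entry of any linear relation $\sum_{\bar n}c_{\bar n}\mathbf n\mathbf n^{\mathsf T}=0$ is $\sum_{\bar n}c_{\bar n}$, forcing $(1,\dots,1)$ into the range of $G\mapsto(\mathbf n^{\mathsf T}G\mathbf n)_{\bar n}$), project $H$ onto $\mathcal A$ to get $G^\ast=H+\Delta$, and bound $\|\Delta\|$ by $3\eps$ times the pseudoinverse norm of the constraint operator. Since the latter has integer entries of size $O(M^2)$ and $O_r(1)$ rows, Cramer-type estimates give pseudoinverse norm $M^{O_r(1)}$, so $\|\Delta\|\le\eps M^{O_r(1)}$; invoking $\eps<M^{-C_r}\delta^{C_r}$ with $C_r$ large makes $\|\Delta\|$ negligible compared to both the second singular value of $H$ and $(\delta/M)^2$, and standard perturbation theory at the smooth point $H$ of the rank-$\le2$ determinantal variety then corrects $G^\ast$, inside $\mathcal A$, to a genuine rank-$\le2$ matrix $G$ with $Ge_1\ne0$. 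In the remaining case $\rank H\le1$, all $g_j$ are real multiples of a single $\omega\in\C$, so $L(\CF)$ lies on the line $\R\omega$, which meets the unit circle in at most two points; being within $\eps$ of the circle on that line and $\delta$-separated, $\CF$ then has at most two elements (for $M$ large), and $L_1,L_2$ with $z_1\ne0$ are produced by a direct two-point calculation.

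I expect the exact-ification to be the main obstacle: turning the $O(\eps)$-approximate rank-$\le2$ quadric $H$ into an exact one through $\CF$, while keeping the correction inside $\mathcal A$. This is exactly where the size relation $\eps<M^{-C_r}\delta^{C_r}$ has to be used, and the delicate point is that when $|\CF|$ exceeds $\binom{r+2}{2}$ the space $\mathcal A$ can be a single point, so one must first rule out that $\CF$ is in such general position — equivalently, show that $\CF$ already lies on an \emph{exact} quadric of controlled complexity — via a determinant/pigeonhole argument exploiting that $\CF$ consists of integer points of height $\le M$ that are $\delta$-separated near the circle. After the exact quadric is secured, extracting the factored identity with $z_1\ne0$ is routine.
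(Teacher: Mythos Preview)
Your reformulation --- seeking a complex symmetric matrix $G$ of rank $\le 2$ with $\mathbf n^{\mathsf T}G\mathbf n=1$ on $\CF$ and $Ge_1\ne 0$, then factoring via a Takagi-type decomposition --- is correct, but the paper proceeds by an entirely different route. It argues by contradiction using the effective Nullstellensatz: if the polynomials $P_{\bar n}(z,w)=\big(z_0+\sum_i n_iz_i\big)\big(w_0+\sum_i n_iw_i\big)-1$ have no common zero with $z_1\ne 0$, then $z_1$ lies in the radical of the ideal they generate, so there is an identity $b\,z_1^{\,l}=\sum_{\bar n\in\CF}Q_{\bar n}P_{\bar n}$ with $b\in\Z\setminus\{0\}$, $l\le C_r'$, and $\deg Q_{\bar n},\ \height(Q_{\bar n})\le C_r'\log M$. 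Substituting $(z_i,w_i)=(g_i,\bar g_i)$ makes each $|P_{\bar n}|=\big||g_0+\sum n_ig_i|^2-1\big|\le O(\eps)$, so the right side is $\le M^{C_r''}\eps$, while the left side is $\ge (\delta/2rM)^{l}$ by \eqref{circle:5}; this contradicts $\eps<M^{-C_r}\delta^{C_r}$.

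Your perturbative plan has a genuine gap, and it sits deeper than the obstacle you flag. The case you single out --- $\mathcal A$ a single point --- is in fact the \emph{tractable} one: then $G_0$ is the unique solution of an integer linear system, hence has rational entries with common denominator $\le M^{O_r(1)}$; its $3\times 3$ minors are rationals of height $M^{O_r(1)}$ that lie within $\eps M^{O_r(1)}$ of the (vanishing) minors of $H$, and are therefore exactly zero once $\eps<M^{-C_r}$. The real difficulty is the intermediate regime $0<\dim\mathcal A<\binom{r}{2}=\operatorname{codim}V$. There your ``correct inside $\mathcal A$'' step cannot work: the map from the $(2r{+}1)$-dimensional manifold $V$ to the constraint space has rank at most $2r{+}1$, so once there are more than $2r{+}1$ independent constraints no Newton/implicit-function argument can promote the approximate solution $H$ to an exact one. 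At the same time $\mathcal A$ is no longer a single rational point, so the Liouville-type argument above is unavailable. Your proposed ``determinant/pigeonhole'' fix would at best yield $\dim\mathcal A\ge 1$, far short of the $\dim\mathcal A\ge\binom{r}{2}$ needed for transversality, and there is no a priori reason integer points of height $\le M$ lying $\eps$-near a circle should satisfy extra exact quadric relations. In the end, the question ``does an $\eps$-approximate complex zero of a bounded-height integer polynomial system force an exact zero?'' is precisely what the effective Nullstellensatz answers, and your scheme has not bypassed it.
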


\medskip

We next conclude Theorem \ref{theorem:circle} using this lemma. Let $\CA= \{z_0+ \sum_{i=1}^r n_i z_i: \bar n\in\mathcal F\}$. 

Applying Proposition 3 in \cite{C} to the mixed progression $$\{n_0z_0+n_0w_0+\sum_{i=1}^r n_i z_i + \sum_{i=1}^r n_i' w_i: |n_0|, |n_0'| <2 \text{ and } |n_i|, |n'_i|<M\},$$ we have
$$|\CA| \le \exp(D_r \log M /\log \log M),$$
for some positive constant $D_r$.

We next partition $\CF$ as
$$\CF = \bigcup_{a\in \CA} \CF_a, \mbox{ where } \CF_a =\Big\{ \bar{n} \in \CF: z_0+\sum_{i=1}^r n_iz_i=a \Big\}.$$
Let $S$ be as in Theorem \ref{theorem:circle}, we write

\begin{equation}\label{circle:6}
S= \Big\{g_0 + \sum_{i=1}^r n_i g_i : \bar{n} \in \CF\Big\} =
\bigcup_{a\in \CA} S_a,
\end{equation}

where
$$S_a:= \{g_0 + \sum_{i=1}^r n_i g_i : \bar{n} \in \CF_a\}.$$
Notice that $S_a \subset P_a:=\{g_0 + \sum_{i=1}^r n_i g_i \in P: z_0 + \sum_{i=1}^r n_iz_i =a\}.$ The gain here is that $P_a$ is contained in a progression of rank at most $r-1$, that is, 
$$g_0 + \sum_{i=1}^{r} n_i g_i = \left (g_0 + \frac{a-z_0}{z_1}g_1\right )+ \sum_{i=2}^{r} n_i\left (g_i-\frac{z_i}{z_1}g_1\right )$$
 so by induction
$$|S_a| \le \exp(C_{r-1} \log M/\log \log M).$$
It thus follows from \eqref{circle:6} that
$$|S| \le \exp (C_r \log M/\log \log M),$$
for some appropriately chosen constant sequence $C_r$, completing the
proof of Theorem \ref{theorem:circle}.

We now prove Lemma \ref{lemma:wz}. We will use the following effective form of Nullstellensatz  \cite{KPS}.

\begin{theorem}\label{theorem:KPS}Let $q,f_1,\dots,f_s \in \mathbb Z[x_1,\dots,x_n]$ with $\deg q, \deg f_i \le d$ for all $i$ such that $q$ vanishes on the common zeros of $f_1,\cdots, f_s$ and $ \height(f_i) \le H$. Then there exist $q_1,\dots,q_s \in \mathbb Z[x_1,\dots,x_n]$ and positive integers $b,l$ such that
\begin{equation}
b \;q^l = \sum_{i=1}^s q_i f_i\label{null2}
\end{equation}
where
$$l\le D =\max_{1\le i\le s}\{\deg q_i \} \le 4n d^n$$
as well as
$$\max_{1\le i\le s}\{\log |b|,  \height(q_i) \} \le 4 n(n+1)d^n
\big[H+\log s + (n+7)d \log (n+1)\big].$$
\end{theorem}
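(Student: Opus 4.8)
The plan is to reconstruct the architecture of the Krick--Pardo--Sombra proof \cite{KPS}, which interleaves an \emph{effective elimination} step (to control degrees) with an \emph{arithmetic intersection} step (to control heights). \textbf{Step 1: Rabinowitsch reduction.} Adjoin a variable $x_{n+1}$ and pass to the $s+1$ polynomials $f_1,\dots,f_s,\ 1-x_{n+1}q\in\Z[x_1,\dots,x_{n+1}]$, which have no common zero in $\C^{n+1}$ precisely because $q$ vanishes on $V(f_1,\dots,f_s)$. If one can produce a B\'ezout identity $b' = \sum_{i=1}^s g_i f_i + g_{s+1}(1-x_{n+1}q)$ in $\Z[x_1,\dots,x_{n+1}]$ with the degrees $\deg g_i$ and the quantities $\height(g_i),\log|b'|$ under control, then substituting $x_{n+1}\mapsto 1/q$ and multiplying through by $q^{L}$ with $L:=\deg_{x_{n+1}}g_{s+1}$ yields $b\,q^{l}=\sum_i q_i f_i$ with $l\le L$ and $\deg q_i\le L\deg q+\deg g_i$. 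The degrees in $x_1,\dots,x_n$ stay $\le d$ and the degree in $x_{n+1}$ is $\le 1$, which is what converts a $d^{n+1}$-type bound in $n+1$ variables into the $4nd^{n}$ of the statement; tracking how heights transform under the substitution and under clearing denominators is routine and produces the $\log s$ and $(n+7)d\log(n+1)$ terms.

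\textbf{Step 2: effective Nullstellensatz with no common zero.} For the system of Step 1 one builds the B\'ezout representation through a resultant / Chow-form construction: after a generic change of coordinates in $\mathrm{GL}_{n+1}(\Z)$ placing the homogenized system in Noether position, form the Macaulay $u$-resultant of $n+1$ generic linear combinations of the homogenizations $F_j$. Non-vanishing of this resultant on the relevant cycle, together with Macaulay's explicit determinantal formula expressing it as a combinant $\sum_j H_j F_j$, gives the identity, and the degree bounds $\deg H_j=O((n+1)d^{\,n+1})$ are B\'ezout's theorem applied to the Macaulay matrix (equivalently, one may invoke the degree bounds of Brownawell and Koll\'ar as a black box and only redo the arithmetic). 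A Bertini-type genericity argument ensures the intermediate linear slices have the expected dimension and behave well under the eliminations.

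\textbf{Step 3: height estimates.} The heights of $b$ and of the $q_i$ are dominated by the height of the Chow form / resultant built in Step 2, so the crux is an effective \emph{arithmetic B\'ezout inequality} --- in the form due to Philippon or to Bost--Gillet--Soul\'e, or via the self-contained Mahler-measure estimates in \cite{KPS} --- bounding the height of an intersection cycle by $\sum_j(\height(F_j)+\deg F_j\cdot\log(\cdots))$ with explicit Archimedean error. Combined with multiplicativity of the Mahler measure and the Gelfond-type inequality $\height(fg)\ge\height(f)+\height(g)-(n+1)\log 2$ (needed when dividing out the denominator $q^{L}$), this yields $\max\{\log|b|,\height(q_i)\}\le 4n(n+1)d^{n}[H+\log s+(n+7)d\log(n+1)]$.

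\textbf{Main obstacle.} Steps 1--2 are classical; the degree part is not where the difficulty lies. The hard part is Step 3: establishing an arithmetic B\'ezout inequality sharp enough that the height grows only linearly in $H$ and polynomially in $d^{n}$, while the slicing construction is iterated $n+1$ times. This forces one either to develop arithmetic intersection theory on $\mathbb{P}^{n}$ with the Fubini--Study metric and control all local (Archimedean and $p$-adic) height contributions, or to carry out a delicate bookkeeping of Mahler measures of Macaulay determinants; keeping the $\log(n+1)$ and $\log s$ error terms from compounding through the iteration is where essentially all the technical weight of \cite{KPS} is concentrated.
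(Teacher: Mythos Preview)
The paper does not prove this theorem at all: it is quoted as a black box from Krick--Pardo--Sombra \cite{KPS}, with a one-line remark that the version in \cite{KPS} is stated for $q=1$ (i.e.\ for systems without common zeros) and that the general case follows from the standard Rabinowitsch trick, citing \cite[Proposition~9]{BBK}. So there is no ``paper's own proof'' to compare your proposal against.

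Your proposal is, in outline, a faithful sketch of the actual architecture of \cite{KPS}: Rabinowitsch reduction, effective elimination via Chow forms / Macaulay resultants after Noether normalization, and an arithmetic B\'ezout inequality to control heights. As a reconstruction of the KPS argument this is reasonable, and you correctly identify Step~3 as the technical heart. But for the purposes of the present paper none of this is needed: the authors simply invoke the result. If you were writing this section, the appropriate ``proof'' is to state the theorem, cite \cite{KPS} for the $q=1$ case, and either cite \cite{BBK} or spell out the Rabinowitsch substitution (your Step~1) to pass to general $q$.
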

Here the height $\height(f)$ of a polynomial $f\in \Z[x_1, \dots, x_n]$ is the logarithm of the maximum modulus of its coefficients.

\noindent {\bf Remark.} Theorem 1 in \cite{KPS} is stated for the case that $q=1$ and  that $f_1,\dots, f_s$ do not have common zeros. However, the standard proof of Nullstellensatz gives the above statement (see \cite[Proposition 9]{BBK} for instance.)

Now define the polynomial $P$ over $\bar{n} \in \CF$ as
$$P_{\bar{n}} (z_0,z_1,\dots, z_r, w_0,w_1,\dots, w_r) = \big(z_0+\sum_{i=1}^r n_i z_i\big) \big(w_0+\sum_{i=1}^r n_i w_i\big) -1.$$
Assume that the claim of Lemma \ref{lemma:wz} does not hold, thus the polynomials $P_{\bar{n}}, \bar{n}\in \CF$ have no common zeros with $z_1 \neq 0$. 

By Theorem \ref{theorem:KPS}, with $n=2r+2, s = |\CF| \le (2M)^{r}, d=2, H \le 2 \log M$ we have
\begin{equation}\label{circle:7}
b z_1^l = \sum_{\bar{n}\in \CF} P_{\bar{n}} Q _{\bar{n}},
\end{equation}
where $b \in \mathbb Z\backslash \{0\}$, $Q_{\bar{n}} \in \mathbb Z[z_0,\dots, z_r, w_0,\dots, w_r]$ such that

\begin{itemize}
\item $\deg(Q_{\bar{n}}) , l\le D \le C_r'$
\vskip .1in
\item the coefficients of $Q_{\bar{n}}$ are bounded by $M^{C_r'}$.
\end{itemize}

Now replacing $z_0,\dots, z_r$ and $w_0,\dots, w_r$ by $g_0,\dots, g_r$ and $\bar{g}_0,\dots, \bar{g}_r$ in \eqref{circle:7}, we have
 $$|g_1|^l \le  \sum_{\bar{n}\in \CF} |P_{\bar{n}}(g_0,\dots, g_r, \bar{g}_0,\dots, \bar{g}_d)|\;
 |Q _{\bar{n}}(g_0,\dots, g_r, \bar{g}_0,\dots, \bar{g}_d)|.$$
By \eqref{circle:3}, \eqref{circle:4}, \eqref{circle:5}  we then have
$$\bigg(\frac{\delta}{2r M}\bigg)^l \le D M^{C_r'} (3. 2^{r} r! r M^r)^D
\sum_{\bar{n}\in \CF} |P_{\bar{n}} (g_0,\dots, g_r, \bar{g_0},\dots, \bar{g}_r)|.$$
On the other hand, by definition, $|P_{\bar{n}} (g_0,\dots, g_r, \bar{g_0},\dots, \bar{g}_r)| \le \eps$ for any $\bar{n} \in \CF$. It thus follows that
 $$\bigg(\frac{\delta}{2r M}\bigg)^l\le\bigg(\frac{\delta}{2r M}\bigg)^D \le M^{C_r''} \eps.$$
However, this is impossible with the choice of $\eps$ from Theorem \ref{theorem:circle}.

\section{Proof of Theorem \ref{theorem:universality2}: verification of Condition \eqref{tailex}}\label{section:smallinterval}

Let $\kappa=N^{-3}$. 
We will verify Condition \eqref{tailex} of Assumption \ref{condition:universality} through the following deterministic lemma, which is of independent interest.

\begin{theorem}\label{theorem:deterministic:d=2} Suppose that $\gamma(t), t\in [0,1]$ is smooth and has non-vanishing curvature. Then there exist a constant $c$ and a collection of at most $N^2$ intervals $S_\alpha$  each of length $O(\kappa)$ such that the following holds for almost all $\lambda$ and for any eigenfunction $\Phi(x) = \sum_{\mu\in \CE_\lambda} a_\mu e^{2\pi i \langle \mu, x \rangle }$ with $\sum_\mu |a_\mu|^2 =1$.
\begin{enumerate}
\item The number of nodal intersections on $\cup S_{\alpha}$ is negligible
$$|N_{\Phi} \cap \cup \gamma(S_\alpha)| \ll \lambda N^{-1},$$ 
\vskip .1in
\item Condition \eqref{tailex} on $[0,1]\setminus \cup S_{\alpha}$: for any $a\in [0,1] \backslash \cup_\alpha S_\alpha$, we have
$$|\{z \in B(a, N^{7}/\lambda): \Phi(\gamma(z))=0\}|\ll N^7.$$
\end{enumerate}
\end{theorem}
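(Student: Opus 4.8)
The plan is to construct the bad set $\cup S_\alpha$ as the union of the sets $S_\varphi$ (with $\varphi$ ranging over the $O(N^2)$ directions in $\CD$) on which $\gamma'(t)$ is nearly parallel to some difference vector $\mu_1-\mu_2$, together possibly with a few extra intervals where lattice points cluster on short arcs. This is exactly the set $\CB_2$ appearing in Theorem \ref{theorem:universality2}, which explains the choice of $\CD$ and $S_\varphi$ promised earlier. For part (1), I would bound $|N_\Phi\cap\gamma(S_\alpha)|$ using the deterministic upper bound of Theorem \ref{theorem:det:upper} (nodal intersections $\ll\lambda$) localized to an interval of length $O(\kappa)=O(N^{-3})$: rescaling, an interval of length $N^{-3}$ contributes $\ll\lambda N^{-3}$ intersections, so summing over $\le N^2$ such intervals gives $\ll\lambda N^{-1}$, as claimed. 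The only subtlety is that Theorem \ref{theorem:det:upper} needs the curve to be analytic with nonvanishing curvature; for a general smooth curve one argues instead via the restriction estimate \eqref{eqn:restriction} combined with the local $L^2$-bound of \cite{TZ}, or works directly with the Jensen-type bound used in part (2). I would present part (1) as a short consequence of the per-unit-length intersection bound.

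For part (2), the heart of the matter, I would argue as follows. Fix $a\in[0,1]\setminus\cup_\alpha S_\alpha$, and consider the restriction $\phi(z):=\Phi(\gamma(z))=\sum_\mu a_\mu e^{2\pi i\langle\mu,\gamma(z)\rangle}$ as a holomorphic function of $z$ in a complex neighborhood of $a$ (using the analytic extension of $\gamma$, Condition \ref{cond-curve}\eqref{cond-ana}). By Jensen's formula, the number of zeros of $\phi$ in $B(a,N^7/\lambda)$ is $\ll\log\big(\max_{B(a,2N^7/\lambda)}|\phi|\big/|\phi(a_0)|\big)$ for a suitable nearby point $a_0$ at which $|\phi(a_0)|$ is not too small. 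The numerator is easy: as in \eqref{bound2}, $|e^{2\pi i\langle\mu,\gamma(z)\rangle}|=O(1)$ on a strip of width $O(1/\lambda)$ after rescaling, so by Cauchy--Schwarz $\max|\phi|\le\sqrt N\cdot O(1)$, contributing only $O(\log N)$. The real work is the lower bound: I must show that $|\phi|$ is not exponentially (in $N$) small everywhere on $\gamma(B(a,2N^7/\lambda))$ — in fact I want $|\phi(a_0)|\ge e^{-N^{O(1)}}$ at some point $a_0$, which yields the bound $N^7$ (absorbing $\log$'s and the $N^{O(1)}$ exponent into the power of $N$).

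To get this lower bound I would use the hypothesis that $a$ avoids $\cup S_\varphi$: this means that along the sub-arc $\gamma(B(a,2N^7/\lambda))$ the tangent direction $\gamma'(t)$ stays at angle $\ge N^{-3}$ from every $\mu_1-\mu_2$, $\mu_i\in\CE_\lambda$; consequently the phases $\langle\mu,\gamma(t)\rangle$ are, for distinct $\mu$, genuinely ``moving apart'' as $t$ varies over this window, i.e. the functions $t\mapsto e^{2\pi i\langle\mu,\gamma(t)\rangle}$ are nearly orthogonal / well-separated in frequency on this window of length $\gg N^7/\lambda$. Quantitatively, $|\frac{d}{dt}\langle\mu_1-\mu_2,\gamma(t)\rangle|=|\langle\mu_1-\mu_2,\gamma'(t)\rangle|\gg\|\mu_1-\mu_2\|\cdot N^{-3}\gg N^{-3}$ (lattice points differ, so $\|\mu_1-\mu_2\|\ge1$), while the curvature bound controls the second derivative; hence on a window of length $\gg N^7/\lambda$ the phase difference sweeps through $\gg N^4/\lambda\cdot\lambda=\gg N^4$... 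I would instead run a Turán-type / Remez-type inequality: a nonzero exponential sum $\sum_\mu a_\mu e^{i\theta_\mu(t)}$ with $\sum|a_\mu|^2=1$, $N$ terms, and frequencies $\theta_\mu'$ that are $N^{-3}$-separated on an interval of length $L$ cannot be smaller than $(c L N^{-3})^{O(N)}$ on a positive-proportion subset — so for $L\asymp N^7/\lambda$ and using the remark after Condition \ref{cond-curve} (that $\gamma$ on such a window is not confined to a ball of radius $N^{-\alpha}/\lambda$, preventing degenerate collapse) one obtains $\max_{a_0}|\phi(a_0)|\ge e^{-N^{O(1)}}$. \textbf{The main obstacle} is precisely making this last step — the polynomial-in-$N$ lower bound for $|\phi|$ on the good window — rigorous and uniform over all unit-norm coefficient vectors $(a_\mu)$; this is where the avoidance of $\CD$ and the almost-all restriction on $\lambda$ (via the separation Lemma \ref{lemma:separation:d=2}, ensuring the frequencies $\langle\mu,\gamma'(a)\rangle$ are themselves well-separated, not just the pairwise differences) are genuinely used, and where one should expect to invoke a quantitative small-value estimate for exponential polynomials rather than a soft argument.
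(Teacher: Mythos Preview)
Your skeleton is right --- define $S_\varphi$ for $\varphi\in\CD$, use Jensen's inequality, and the key is a lower bound on $|\Phi\circ\gamma|$ somewhere in the window --- but the route you propose for that lower bound is both harder and weaker than what the paper does, and your part (1) argument does not quite work as stated.

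For part (1): you cannot simply localize Theorem~\ref{theorem:det:upper} to a sub-arc of length $N^{-3}$ by ``rescaling''. Rescaling the arc to unit length sends the curvature to zero and changes the eigenvalue; the constant in that theorem is not uniform under such degeneration. The paper instead handles part (1) by the \emph{same} Jensen-plus-$L^2$ method as part (2): on each $S_\varphi$ (which has length $\kappa=N^{-3}\gg \lambda^{-1/2}(\log\lambda)^{3/4+\eps}N$) one expands $\int_{S_\varphi}|\Phi(\gamma(t))|^2\,dt$, kills the off-diagonal by van der Corput, and concludes $\max_{S_\varphi}|\Phi\circ\gamma|\ge 1/\sqrt2$. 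Jensen then gives $\ll\kappa\lambda$ zeros per $S_\varphi$, hence $\ll N^2\kappa\lambda=\lambda N^{-1}$ total. Your fallback ``Jensen-type bound'' remark is the right instinct; that is the actual argument.

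For part (2): the ``main obstacle'' you flag --- a Tur\'an/Remez lower bound $|\phi(a_0)|\ge e^{-N^{O(1)}}$ --- is real if you insist on that route, but the paper avoids it entirely. Instead of a pointwise small-value estimate for exponential polynomials, the paper computes the $L^2$ average directly on the short window $\tilde I=[a-\delta,a+\delta]$, $\delta=N^7/\lambda$:
\[
\frac{1}{|\tilde I|}\int_{\tilde I}|\Phi(\gamma(t))|^2\,dt \;=\; 1 \;-\; \sum_{\mu\ne\mu'}a_\mu\bar a_{\mu'}\,\frac{1}{|\tilde I|}\int_{\tilde I}e^{2\pi i\langle\mu-\mu',\gamma(t)\rangle}\,dt.
\]
The cross terms are controlled by linearizing the phase at $a$ and bounding the resulting oscillatory integral by $1/(\delta\,|\langle\mu-\mu',\gamma'(a)\rangle|)$. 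Here two inputs combine: avoidance of $\CD$ gives $|\langle\mu-\mu',\gamma'(a)\rangle|\ge\kappa\|\mu-\mu'\|$, and --- this is the step you underuse --- Lemma~\ref{lemma:separation:d=2} gives $\|\mu-\mu'\|\gg\lambda/\log^{3/2+\eps}\lambda$, not merely $\ge 1$. Together these make each cross term $\ll N^3\log^{3/2+\eps}\lambda/N^7$, and summing over $\le N^2$ pairs the off-diagonal is $o(1)$. Hence $\max_{\tilde I}|\Phi\circ\gamma|\ge 1/\sqrt2$, an \emph{absolute constant}, and Jensen yields $\ll N^7+\log N\ll N^7$ zeros.

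So the missing idea is: replace the Tur\'an-type pointwise argument by an $L^2$ orthogonality computation, and feed in the strong separation $\|\mu-\mu'\|\gg\lambda/\mathrm{polylog}\,\lambda$ (this is where ``almost all $\lambda$'' enters). Your calculation ``phase sweeps through $\gg N^4/\lambda$'' stalled precisely because you used $\|\mu-\mu'\|\ge 1$; with the correct separation the phase velocity is of order $\kappa\lambda$, and the window $N^7/\lambda$ is long enough for genuine cancellation.
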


To prove Theorem \ref{theorem:deterministic:d=2} we first need a separation result (see also \cite[Lemma 5]{BR}).

\begin{lemma}\label{lemma:separation:d=2} For almost all $\lambda$, we have
\begin{equation}
\min_{\mu_1 \neq \mu_2 \in \CE_\lambda} \|\mu_1-\mu_2\| \gg \frac{\lambda}{\log^{3/2+\eps} \lambda}.\label{dist1}
\end{equation}
\end{lemma}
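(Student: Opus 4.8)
The plan is to prove a lower bound on the minimal gap between distinct lattice points on the circle $|\mu|^2 = m$ via a counting (first-moment) argument over $m$, showing that the "bad" set of $m$ where two lattice points are very close has density zero. First I would fix a threshold $\eta = \eta(\lambda) = \lambda/\log^{3/2+\eps}\lambda$ and ask: for how many $m \le X$ does there exist $\mu_1 \neq \mu_2 \in \CE_\lambda$ (with $\lambda = 2\pi\sqrt m$) satisfying $\|\mu_1 - \mu_2\| \le \eta$? If $\mu_1,\mu_2$ both lie on $|x|^2 = m$ and are at distance $\le \eta$, then writing $\nu = \mu_1 - \mu_2 \in \Z^2\setminus\{0\}$, we have $0 < |\nu| \le \eta$ and, since $|\mu_1|^2 = |\mu_2|^2$, the vector $\nu$ is (nearly) orthogonal to $\mu_1 + \mu_2$; more precisely $\langle \mu_1+\mu_2, \nu\rangle = |\mu_1|^2 - |\mu_2|^2 = 0$. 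So $\mu_1 + \mu_2$ is a lattice vector of length $\approx 2\sqrt m$ lying on the line through the origin perpendicular to $\nu$. For a fixed primitive direction $\nu/\gcd$, the number of lattice points of length $\le 2\sqrt m$ on the perpendicular line is $O(\sqrt m / \|\nu_{\mathrm{prim}}^\perp\|)$ where $\nu_{\mathrm{prim}}^\perp$ is the shortest lattice vector in that perpendicular direction, and $\|\nu_{\mathrm{prim}}^\perp\| \gg |\nu|/\gcd(\nu)$ is not quite the right bound — instead I'd note directly that once $\nu$ is fixed, $\mu_1$ ranges over lattice points on an explicit line, so there are $O(\sqrt m/|\nu|)$ choices, hence $O(\sqrt m)$ choices for the pair given $\nu$ with $|\nu| \gtrsim 1$; summing over $\nu$ with $|\nu| \le \eta$ gives $O(\eta^2 \sqrt m)$ candidate configurations, but each configuration determines $m = |\mu_1|^2$, so this is an overcount in the wrong direction.

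A cleaner route: count pairs $(m, \{\mu_1,\mu_2\})$ with $m \le X$ directly. The pair is determined by $\mu_1 \in \Z^2$ with $|\mu_1|^2 \le X$ (so $O(X)$ choices) and $\nu = \mu_1 - \mu_2$ with $0 < |\nu| \le \eta \ll \sqrt X/\log^{3/2+\eps}X$ and the orthogonality constraint $\langle 2\mu_1 - \nu, \nu\rangle = 0$, i.e. $\langle \mu_1,\nu\rangle = |\nu|^2/2$. For fixed $\mu_1$, this is a linear condition on $\nu$ cutting the disk $|\nu|\le\eta$ down to a line, yielding $O(\eta/1) = O(\eta)$ choices of $\nu$ — wait, one must be careful that the line $\langle \mu_1, \cdot\rangle = |\cdot|^2/2$ is not linear; better to fix the \emph{direction} of $\nu$. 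The right bookkeeping, following \cite[Lemma 5]{BR}, is: the number of $m \le X$ admitting two lattice points at distance exactly $|\nu| = \ell$ is $O(\sqrt X \cdot \text{(number of representations stuff)})$; summing the count of such $m$ over $1 \le \ell \le \eta$ and using that $r_2(m) = N$ forces $m$ to be divisible by many primes $\equiv 1 \bmod 4$ — equivalently, $N_m$ large happens only on a density-zero set, and on the complement the gap is automatically $\gg \sqrt m / N_m^{O(1)} \gg \lambda/\log^{3/2+\eps}\lambda$ since $N_m = \lambda^{o(1)}$ and in fact $N_m \ll \log^{?}\lambda$ off a sparse set. I would combine two inputs: (a) the number of $m \le X$ with $r_2(m) \ge (\log X)^{B}$ is $o(X)$ for any fixed $B$ (standard, via the normal order of the number of prime divisors $\equiv 1 \bmod 4$, Erdős–Kac type); and (b) a deterministic separation bound $\min_{\mu_1\neq\mu_2}\|\mu_1-\mu_2\| \gg \sqrt m / r_2(m)^{C}$ valid for \emph{every} $m$, proved by the orthogonality/collinearity observation above: two close points on the circle force $\mu_1+\mu_2$ to be long in a controlled lattice direction, and packing $N$ points on an arc forces the directions to be spread out, giving the quantitative gap. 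Together these give the lemma for all $\lambda$ outside a density-zero set of $m$.

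The main obstacle I expect is getting the correct \emph{power} of $\log\lambda$, namely $3/2+\eps$ rather than just $\lambda^{1-o(1)}$. This requires the sharp form of input (a): not just that $r_2(m) = \lambda^{o(1)}$ a.e., but that the typical size is bounded by a fixed power of $\log\lambda$ and, crucially, that the separation bound (b) loses only a \emph{bounded} power of $r_2(m)$ — in fact I expect the honest statement is $\min\|\mu_1-\mu_2\| \gg \lambda/r_2(m)$ or $\lambda/r_2(m)^{3/2}$ coming from the arc-length argument (points on an arc of the circle of radius $\lambda$), and then (a) in the refined form "$r_2(m) \le \log^{1+\eps}\lambda$ for almost all $m$" (which follows since $\omega(m) \le (1+\eps)\log\log m$ a.e. and $r_2(m) \le 2^{\omega(m)}$ gives $r_2(m) \le (\log m)^{(1+\eps)\log 2} = (\log m)^{0.69\ldots}$, actually beating $3/2$) closes the gap. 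I would need to track these exponents carefully; everything else (the elementary geometry of two nearby lattice points on a circle, and the union bound over dyadic ranges of $X$ to upgrade "density zero in $[X, 2X]$" to "almost all $\lambda$") is routine.
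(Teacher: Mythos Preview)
Your initial first-moment counting was actually the right idea and is precisely what the paper does --- you abandoned it too quickly. The paper fixes $\nu = \mu_1-\mu_2$ first (not $\mu_1$): for each $\nu \in \Z^2\setminus\{0\}$ with $\|\nu\|<M$, the constraint $\|\mu_1\|=\|\mu_2\|$ becomes the linear equation $\langle 2\mu_1+\nu,\nu\rangle=0$, so $y=2\mu_1+\nu$ lies on the line $\langle y,\nu\rangle=0$ inside the disk of radius $3R$, giving $O(R/\|\nu'\|)$ solutions with $\nu'=\nu/\gcd(\nu)$. Summing over $\nu$ via $\sum_{n\le T} r_2(n)/\sqrt n \ll \sqrt T$ and over the gcd yields a total of $O(MR\log R)$ bad pairs. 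Since the number of sums of two squares up to $R^2$ is $\gg R^2/\sqrt{\log R}$ (Landau), choosing $M=R(\log R)^{-3/2-\eps}$ makes the bad set $o$ of the total. The exponent $3/2$ arises exactly as $1$ (from $\log R$ in the pair count) plus $1/2$ (from Landau's density), so there is no mystery to track.

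Your fallback strategy, combining (a) $r_2(m)\le (\log m)^{O(1)}$ for almost all $m$ with (b) a deterministic bound $\min\|\mu_1-\mu_2\|\gg \sqrt m/r_2(m)^C$, has a genuine gap: claim (b) is false. Take $m=2b^2+2b+1=(b+1)^2+b^2$. The points $(b+1,b)$ and $(b,b+1)$ both lie on $\CE_\lambda$ and are at distance $\sqrt 2$, while $\sqrt m\to\infty$; for infinitely many such $m$ one has $r_2(m)$ bounded (indeed $r_2(m)=8$ whenever $m$ is prime, and there is no obstruction to $r_2(m)$ staying bounded along this sequence). So no power of $r_2(m)$ can absorb the gap, and the argument cannot close. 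The separation lemma is genuinely an ``almost all'' statement that must be proved by averaging over $m$, not by combining a pointwise geometric bound with typical-$r_2$ information.
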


\begin{proof}(of Lemma \ref{lemma:separation:d=2}) Let $R$ be a parameter and $M = R (\log R)^{-3/2-\eps}$. Then 
\begin{align*}
&\Big|\{(x,y)\in \Z^2 \times \Z^2: \|x\| = \|y\| \le R, 0<\|x-y\| <M \}\Big| \\
&=
\sum_{v \in \Z^2\backslash \{0\}, \|v\|< M} |\{x\in \Z^2: \|x\| = \|x+v\| \le R\}|\\
&= \sum_{v \in  \Z^2\backslash \{0\}, \|v\|< M} \Big|\{\|x\|\le R: 2 \langle x, v  \rangle + \|v\|^2 =0\}\Big|\\
&\le \sum_{v \in  \Z^2\backslash \{0\}, \|v\| <M} \Big|\{\|y\| \le 3R: y_1 v_1+ y_2v_2=0\}\Big|,
\end{align*}

where $x=(x_1,x_2), v=(v_1,v_2)$ and $y=(y_1,y_2)=2x+v$.

Now if $v_2=0$ then $y_1=0$. The contribution to the above sum is $O(MR)$. Similarly for $v_1=0$. For the other case that $v_1, v_2 \neq 0$, let $d=\gcd(v_1,v_2)$. Then $(v_1,v_2)= d(v_1',v_2') $ with $ \gcd(v_1',v_2')=1$. The equation $y_1 v_1' + y_2 v_2'=0$ has $O\left (R/\|v'\|\right )$ solutions in $y$ with $\|y\| < 3R$.

So by the Abel's summation formula, we have
\begin{align*}
&\sum_{v \in  \Z^2\backslash \{0\}, \|v\| <M} |\{\|y\| \le 3R: y_1 v_1+ y_2v_2=0\}| \ll  MR + \sum_{d<R} \sum_{v' \in \Z^2\backslash \{0\},\|v'\|<M/d} R/\|v'\|\\
&= R \sum_{d<R} \sum_{n=1}^{M^2/d^2}\frac{r_2(n)}{\sqrt{n}} = R \sum_{d<R}\left [ \frac{\sum_{n=1}^{M^2/d^2}r_2(n)}{M/d} + \sum_{N=1}^{M^2/d^2-1} (\sum_{n=1}^{N}r_2(n))\left (\frac{1}{\sqrt{N}} - \frac{1}{\sqrt{N+1}}\right ) \right ].
\end{align*}
By Gauss' formula
$$\sum _{n=0}^{x} r_2(n) = (\pi+o(1))x,$$
we have
\begin{equation}
\sum_{v \in  \Z^2\backslash \{0\}, \|v\| <M} |\{\|y\| \le 3R: y_1 v_1+ y_2v_2=0\}|\ll R\sum_{d<R} \frac{M}{d} \ll MR\log R.\nonumber
\end{equation}

Hence 
$$|\{(x,y)\in \Z^2 \times \Z^2: \|x\| = \|y\| \le R, 0<\|x-y\| <M \}|  \ll MR \log R.$$
On the other hand, 
$$|\{(x,y)\in \Z^2 \times \Z^2: \|x\| = \|y\| \le R, 0<\|x-y\| <M \}| \ge \sum'_{ E < R^2} \textbf{1}_{\min\{\|x-y\|, \|x\|^2 = \|y\|^2 = E, x \neq y\}<M},$$
where $\sum' $ is the sum over $E$ of sum of two squares. Note that by a classical result of Landau \cite{Landau}
$$|\{E \in \Z, E <R^2, E=\mbox{sum of two squares}\}| \gg R^2 /\sqrt{\log R}.$$

Recall that $M = R (\log R)^{-3/2 -\eps}$. 
Thus for almost all $E \le R^2$ that are sum of two squares, 
$$\min_{\|x\|^2 =\|y\|^2 =E, x \neq y}\|x-y\|\ge M \gg R  (\log R)^{-3/2 -\eps} \gg \sqrt{E} (\log E)^{-3/2 -\eps}.$$
\end{proof}

Recall that by Condition \ref{cond-curve}\eqref{cond-ana}, the curve $\gamma$ has an analytic continuation to $[0,1]+
B(0,\ep) \subset \C$. Arguing as in Sections \ref{sub:doundedness:smooth} and \ref{sub:tailexsmooth}, we get the following.

\begin{lemma}\label{lemma:Jensen:d=2}  Let $I$ be any interval with length $\delta =|I| <\ep/2$. Then for any $\Phi$ as in Theorem \ref{theorem:deterministic:d=2}
$$|\{z \in I+B(0, \delta): \Phi(\gamma(z))=0\}| \le C \lambda \delta +\log N - \log \max_{t\in I} |\Phi(\gamma(t))|.$$
\end{lemma}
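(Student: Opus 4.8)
The plan is to prove Lemma \ref{lemma:Jensen:d=2} by a standard application of Jensen's formula for holomorphic functions, exactly in the spirit of Sections \ref{sub:doundedness:smooth} and \ref{sub:tailexsmooth}. Fix an interval $I = [a-\delta/2, a+\delta/2]$ of length $\delta < \ep/2$, and consider the function $\psi(z) := \Phi(\gamma(z))$. By Condition \ref{cond-curve}\eqref{cond-ana}, $\gamma$ extends analytically to $[0,1]+B(0,\ep)$, so $\psi$ is holomorphic on $I + B(0, 2\delta)$, which contains the closed disk $D := \bar B(a, 3\delta/2) \supset I + B(0,\delta)$. First I would bound $|\psi|$ from above on $D$: writing $\Phi(x) = \sum_{\mu\in\CE_\lambda} a_\mu e^{2\pi i \langle\mu, x\rangle}$ with $\sum_\mu |a_\mu|^2 = 1$ (so in particular $\sum_\mu |a_\mu| \le \sqrt N$ by Cauchy--Schwarz), and noting that for $z\in D$ with $x = \Re(z)$ one has $|\Im\langle\mu, \gamma(z)\rangle| = |\langle\mu, \gamma(z)-\gamma(x)\rangle| + O(\ldots)$ controlled by $\lambda \cdot |z-x| \le \lambda\delta \cdot O(1)$ via $\|\gamma'\| = 1$ and $|\mu| \asymp \lambda$, we get $|e^{2\pi i\langle\mu,\gamma(z)\rangle}| = e^{-2\pi\Im\langle\mu,\gamma(z)\rangle} \le e^{C\lambda\delta}$. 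Hence $\max_{z\in D}|\psi(z)| \le e^{C\lambda\delta}\sum_\mu |a_\mu| \le \sqrt N\, e^{C\lambda\delta}$, so $\log\max_{z\in D}|\psi(z)| \le C\lambda\delta + \tfrac12\log N$.

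Next I would apply Jensen's formula (or rather the standard consequence bounding the zero count of a holomorphic function on a disk by the log of the ratio of the maximum modulus on a slightly larger disk to the modulus at the center). Concretely, the number of zeros of $\psi$ in $I + B(0,\delta) \subset \bar B(a, 3\delta/2)$ is $O(1)$ times $\log\big(\max_{z\in\bar B(a, 2\delta)}|\psi(z)| / |\psi(a)|\big)$ — but since the statement of the lemma records $\max_{t\in I}|\Phi(\gamma(t))|$ rather than the value at a single point, I would instead pick $t_0 \in I$ achieving $|\psi(t_0)| = \max_{t\in I}|\Phi(\gamma(t))|$, center a disk of comparable radius at $t_0$ covering $I + B(0,\delta)$, and conclude
$$|\{z \in I + B(0,\delta) : \psi(z) = 0\}| \le C\Big(\log\max_{z\in \bar B(t_0, 3\delta)}|\psi(z)| - \log|\psi(t_0)|\Big) \le C\lambda\delta + \log N - \log\max_{t\in I}|\Phi(\gamma(t))|,$$
after enlarging $C$ and using the upper bound from the previous step (with $\bar B(t_0,3\delta) \subset [0,1]+B(0,\ep)$ since $\delta < \ep/2$). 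Absorbing the factor $\tfrac12$ and the various $O(1)$ constants into the single constant $C$ in the displayed inequality gives the claimed form.

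The only mildly delicate point — and the one I would be most careful about — is the bookkeeping that lets one replace the value of $\psi$ at the \emph{center} of the Jensen disk by its \emph{maximum over the real interval} $I$: this requires choosing the disk radii so that a disk centered at the maximizing point $t_0\in I$ both stays inside the domain of analyticity and still contains $I+B(0,\delta)$, which works precisely because $\delta < \ep/2$ and $|I| = \delta$. Everything else is routine: the upper bound on $|\psi|$ is the same computation as in \eqref{bound2}, and the passage from ``max modulus on a disk over value at a point'' to ``number of zeros in a smaller disk'' is the usual form of Jensen's inequality already invoked in the verification of Condition \eqref{tailex} in Section \ref{sub:tailexsmooth}. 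I do not expect any genuine obstacle here; the lemma is essentially a repackaging of the Jensen-formula estimates used earlier, tailored to a general unit-norm eigenfunction $\Phi$ rather than the specific random $H$.
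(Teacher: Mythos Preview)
Your proposal is correct and follows essentially the same approach as the paper's proof: bound $|\Phi(\gamma(z))|$ above by $\sqrt{N}\,e^{C\lambda\delta}$ using $|\mu|\asymp\lambda$ and the analytic extension of $\gamma$, then apply Jensen's inequality on a disk centered at the maximizing point $t_0\in I$. The paper's version is terser and leaves the centering-at-$t_0$ step implicit, but the argument is the same; your only cosmetic slip is that the radius $3\delta$ you chose requires $\delta<\ep/3$ rather than $\ep/2$, which is trivially fixed by taking a slightly smaller outer radius (the paper uses $2\delta$).
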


\begin{proof}(of Lemma \ref{lemma:Jensen:d=2}) For $z\in I+B(0, 2\delta ), \exists t \in \R$ such that  $|z-t| < 2\delta $, 
$$|\gamma(z) - \gamma(t)| \le c \delta.$$

Hence for $\mu \in \CE_\lambda,$
$$\left |e^{i  \langle \mu, \gamma(z) \rangle }\right |= \left |e^{i  \langle \mu, \gamma(z)-\gamma(t)\rangle }  \right |\le e^{c \lambda \delta}.$$
Therefore 
$$|\Phi(\gamma(z))| \le (\sum_{\mu \in \CE_\lambda} |a_{\mu}|) e^{c \lambda \delta} < \sqrt{N} e^{c \lambda \delta}.$$
Jensen's inequality then implies
\begin{align*}
|\{z \in I+B(0, \delta), \Phi(\gamma(z))=0\}| &\le \log (\sqrt{N} e^{c \lambda \delta}) - \log \max_{t\in I} |\Phi(\gamma(t))| \\
&\le c \lambda \delta + \log N -  \log \max_{t\in I} |\Phi(\gamma(t))|. 
\end{align*}
\end{proof}

Now we want to bound $\max_{t\in I} |\Phi(\gamma(t))|$.
\begin{lemma}\label{lemma:L2:d=2} We have 
$$\frac{1}{|I|} \int_I |\Phi(\gamma(t))|^2 dt  \ge 1/2,$$

provided that $\lambda$ satisfied \eqref{dist1} of Lemma \ref{lemma:separation:d=2} and 
$$|I| > \lambda^{-1/2} (\log \lambda)^{3/4+\eps} N.$$
\end{lemma}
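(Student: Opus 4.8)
The plan is to expand the square and isolate the diagonal contribution. Writing $\Phi(\gamma(t))=\sum_{\mu\in\CE_\lambda}a_\mu e^{2\pi i\langle\mu,\gamma(t)\rangle}$ and setting $J(w):=\int_I e^{2\pi i\langle w,\gamma(t)\rangle}\,dt$, we have
$$\int_I|\Phi(\gamma(t))|^2\,dt=\sum_{\mu,\nu\in\CE_\lambda}a_\mu\bar a_\nu\,J(\mu-\nu).$$
The terms with $\mu=\nu$ contribute exactly $|I|\sum_\mu|a_\mu|^2=|I|$, so it suffices to prove that $\big|\sum_{\mu\ne\nu}a_\mu\bar a_\nu J(\mu-\nu)\big|\le |I|/2$; then $\frac1{|I|}\int_I|\Phi(\gamma(t))|^2\,dt\ge 1-\tfrac12=\tfrac12$.

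The key analytic input is the oscillatory integral bound
$$|J(w)|\ll\|w\|^{-1/2}\qquad(\|w\|\ge 1),$$
uniformly over intervals $I\subset[0,1]$, with implied constant depending only on the curvature lower bound $c$ and on $\max_t\|\gamma''(t)\|$. Granting this, I would invoke Lemma~\ref{lemma:separation:d=2}: since $\lambda$ satisfies \eqref{dist1}, every $w=\mu-\nu$ with $\mu\ne\nu$ in $\CE_\lambda$ has $\|w\|\gg\lambda/\log^{3/2+\eps}\lambda$, hence $|J(\mu-\nu)|\ll\lambda^{-1/2}(\log\lambda)^{3/4+\eps/2}$. Combined with Cauchy--Schwarz, $\sum_{\mu\ne\nu}|a_\mu||a_\nu|\le\big(\sum_\mu|a_\mu|\big)^2\le N\sum_\mu|a_\mu|^2=N$, we get
$$\Big|\sum_{\mu\ne\nu}a_\mu\bar a_\nu J(\mu-\nu)\Big|\ll N\lambda^{-1/2}(\log\lambda)^{3/4+\eps/2}.$$
Dividing by $|I|$ and using $|I|>\lambda^{-1/2}(\log\lambda)^{3/4+\eps}N$, this is $O\big((\log\lambda)^{-\eps/2}\big)=o(1)$, so it is at most $1/2$ for all sufficiently large $\lambda$, which is the regime of interest.

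It remains to establish the bound on $J(w)$, and this quantitative van der Corput estimate is the one genuinely technical point. Put $\phi(t)=\langle w,\gamma(t)\rangle$, so $\phi'=\langle w,\gamma'\rangle$ and $\phi''=\langle w,\gamma''\rangle$. Because $\|\gamma'(t)\|=1$, we may write $\gamma'(t)=(\cos\theta(t),\sin\theta(t))$, and since $\|\gamma''(t)\|=|\theta'(t)|>c$ with $\theta'$ continuous and nonvanishing, hence of constant sign, $\theta$ is strictly monotone and on $I\subset[0,1]$ sweeps an arc of total length at most $\max_t\|\gamma''(t)\|$. Hence $\phi'(t)=\|w\|\cos(\theta(t)-\psi)$ (with $\psi$ the argument of $w$) has only $O(1)$ intervals of monotonicity, and I would partition $I$ into that many subintervals $J$. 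On each such $J$, let $J_0\subset J$ be the subinterval where $|\phi'(t)|\le\|w\|^{1/2}$; there $|\cos(\theta(t)-\psi)|\le\|w\|^{-1/2}$, so $|\sin(\theta(t)-\psi)|\ge 1/2$ for $\|w\|$ large, whence $|\phi''(t)|=\|w\|\,|\theta'(t)|\,|\sin(\theta(t)-\psi)|\ge(c/2)\|w\|$, and the second-derivative van der Corput test gives $\big|\int_{J_0}e^{2\pi i\phi}\big|\ll\|w\|^{-1/2}$; on $J\setminus J_0$ one has $|\phi'|>\|w\|^{1/2}$ with $\phi'$ monotone, so the first-derivative test gives $\big|\int_{J\setminus J_0}e^{2\pi i\phi}\big|\ll\|w\|^{-1/2}$. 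Summing the $O(1)$ contributions yields $|J(w)|\ll\|w\|^{-1/2}$. The point to be careful about -- and the reason a single application of the second-derivative test does not work -- is that $\phi''=\langle w,\gamma''\rangle$ is not bounded below on all of $I$: it degenerates precisely where $w\perp\gamma''(t)$, which are exactly the places where instead $|\phi'|$ is of size $\asymp\|w\|$, so the two van der Corput regimes must be combined.
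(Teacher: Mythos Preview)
Your proof is correct and follows essentially the same approach as the paper: expand the square, isolate the diagonal $|I|$, bound each off-diagonal oscillatory integral by $\|\mu-\nu\|^{-1/2}$ via van der Corput, invoke the separation Lemma~\ref{lemma:separation:d=2}, and use $\sum_{\mu\ne\nu}|a_\mu||a_\nu|\le N$. The only difference is that the paper simply cites the van der Corput estimate from \cite{BR2}, whereas you spell out the first/second-derivative dichotomy; your added detail is correct and in fact clarifies the point.
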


\begin{proof}(of Lemma \ref{lemma:L2:d=2}) 
We write 
\begin{align*}
\int_I | \Phi(\gamma(t))|^2 dt = \int_I \left |\sum_\mu a_\mu e^{2\pi i \langle \mu, \gamma(t) \rangle }\right |^{2} dt&= |I| +\sum_{\mu \neq \mu'} a_\mu \bar{a}_{\mu'} \int_I  e^{2\pi i \langle \mu-\mu', \gamma(t) \rangle }\\
&\ge |I| - \sum_{\mu \neq \mu'} |a_\mu| |a_{\mu'}| |\int_I  e^{2\pi i \langle \mu-\mu', \gamma(t) \rangle }|.
\end{align*}

By van der Corput's lemma on oscillatory integral (see for instance \cite{BR2}),
$$\left |\int_I  e^{2\pi i \langle \mu-\mu', \gamma(t) \rangle }dt\right | \le \frac{1}{\|\mu -\mu'\|^{1/2}}.$$ 
Hence
$$\int_I | \Phi(\gamma(t))|^2 dt  \ge |I| - \frac{\log^{3/4+\eps} \lambda}{\lambda^{1/2}} N \gg |I|/2.$$
\end{proof}

Recall the set of directions,
$$\CD = \left \{\frac{\mu_1 - \mu_2}{\|\mu_1 -\mu_2\|}, \mu_1 \neq \mu_2, \mu_1, \mu_2\in \CE_\lambda\right \}.$$

We partition $[0,1]$ as follows: for every unit direction $\varphi$,
let $S_\varphi$ be the interval 
$$S_\varphi:=\{t\in [0,1], \angle(\gamma'(t), \varphi) <\kappa\}.$$
\begin{claim}\label{claim:interval:d=2} Assume that the arc-length parametrized curve $\gamma(t)$ has curvature bounded from below by some $c>0$ for all $t$. Then for each $\varphi$, $S_\varphi$ is an interval and has size $O(\kappa)$, where the implied constant depends on $c$.
\end{claim}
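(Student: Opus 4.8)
The plan is to show that $S_\varphi = \{t \in [0,1] : \angle(\gamma'(t), \varphi) < \kappa\}$ is both an interval and has length $O(\kappa)$, using the hypothesis that the arc-length-parametrized curve has curvature $\|\gamma''(t)\| > c$ for all $t$. The key observation is that since $\gamma$ is parametrized by arc length, $\gamma'(t)$ is a unit vector, so we may write $\gamma'(t) = (\cos\theta(t), \sin\theta(t))$ for a smooth angle function $\theta(t)$ (defined up to a locally constant multiple of $2\pi$, but $\theta'$ is well-defined). Differentiating gives $\gamma''(t) = \theta'(t)(-\sin\theta(t), \cos\theta(t))$, so $\|\gamma''(t)\| = |\theta'(t)|$, and the curvature hypothesis becomes $|\theta'(t)| > c$ for all $t$.

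First I would fix a reference value, say $\theta_0$, the angle of the direction $\varphi$. The condition $\angle(\gamma'(t), \varphi) < \kappa$ is equivalent to $\theta(t)$ lying within $\kappa$ of $\theta_0$ modulo $2\pi$; more precisely, it says $\operatorname{dist}(\theta(t) - \theta_0, 2\pi\Z) < \kappa$. Since $\theta'$ has constant sign (by continuity and $|\theta'| > c > 0$), the function $\theta$ is strictly monotone on $[0,1]$, hence injective. Over the whole interval $[0,1]$ of arc length, $\theta$ increases (or decreases) by $\int_0^1 |\theta'(t)|\,dt$; there is no a priori upper bound on this from the stated hypotheses alone, so in principle $\theta$ could wrap around the circle several times. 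I would handle this by noting that $\kappa = N^{-3}$ is tiny, and the total variation of $\theta$ over $[0,1]$ is at most a fixed constant $C_\gamma$ depending only on $\max \|\gamma''\|$ over the (fixed) curve — this is where I use that $\gamma$ is a fixed smooth curve, so $\|\gamma''\|$ is bounded above, and the implied constants are allowed to depend on $\gamma$.

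Then I would argue as follows. The set of $t$ with $\operatorname{dist}(\theta(t)-\theta_0, 2\pi\Z) < \kappa$ is the preimage under the monotone map $\theta$ of a union of arcs of length $2\kappa$ centered at the points $\theta_0 + 2\pi k$. Because $\theta$ is strictly monotone with $|\theta'| > c$, each such arc pulls back to an interval of length at most $2\kappa/c$. Since the image $\theta([0,1])$ has length at most $C_\gamma$, it meets at most $C_\gamma/(2\pi) + 1 = O(1)$ of these arcs. Hence $S_\varphi$ is a union of $O(1)$ intervals each of length $O(\kappa)$, so it has total measure $O(\kappa)$. To get that it is a single interval (as claimed), I would additionally observe that consecutive arcs $\theta_0 + 2\pi k$ and $\theta_0 + 2\pi(k+1)$ are separated by $2\pi$, and since $\kappa = N^{-3}$ is far smaller than, say, the diameter $C_\gamma$ being less than $2\pi$ for the curve in question — or, failing that uniform smallness, one simply restates the claim as "$S_\varphi$ is a union of $O(1)$ intervals of total length $O(\kappa)$," which is all that is used downstream; alternatively, if one genuinely needs connectedness, one notes that $\theta([0,1])$ is itself an interval of length $< 2\pi$ for the fixed curve under consideration (this can be arranged, or is part of the standing hypotheses), so it meets at most one arc and $S_\varphi$ is genuinely a single interval.

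The main obstacle is the bookkeeping about how many times $\theta$ winds around the circle: the cleanest statement requires knowing the total turning $\int_0^1|\theta'|$ is less than $2\pi$, which is not literally forced by "positive curvature" alone but holds for the fixed reference curve (its total curvature is a fixed finite constant, and one can reduce to short subarcs if necessary). Once that is pinned down, the rest is an elementary monotonicity-and-change-of-variables argument: $|\theta'| > c$ gives the upper bound $|S_\varphi| \le 2\kappa/c = O(\kappa)$, and monotonicity of $\theta$ gives that the relevant preimage is an interval. I expect the write-up to be short, with the only subtlety being the explicit statement of which constants depend on $\gamma$ and the remark that $\kappa$ being polynomially small in $N$ makes all the "$O(1)$ arcs" considerations collapse to a single arc in the regime of interest.
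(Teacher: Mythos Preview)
Your approach is essentially the same as the paper's: introduce an angle function for $\gamma'(t)$, observe that its derivative has absolute value equal to the curvature and hence is bounded below by $c$ with constant sign, and conclude by monotonicity. The paper does this in two lines, writing $a(t)$ for the angle between $\gamma'(t)$ and $\varphi$ and asserting ``it is easy to deduce the claim'' without further comment.

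You are in fact more careful than the paper on one point: you correctly flag that the literal statement ``$S_\varphi$ is an interval'' requires the total turning $\int_0^1 |\theta'|$ to be less than $2\pi$, which is not forced by the lower curvature bound alone. The paper does not address this. Your proposed resolutions --- either invoke a (curve-dependent) upper bound on $\|\gamma''\|$ so that the total turning is a fixed constant and at most one arc is hit, or relax the conclusion to ``a union of $O(1)$ intervals of total length $O(\kappa)$,'' which is all that is used downstream --- are both fine and in the spirit of the paper's conventions on implicit constants depending on $\gamma$.
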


\begin{proof} Let $a(t)$ be the angle between $\gamma'(t)$ and $\varphi$. Then the curvature of $\gamma$ at $t$ is $|a'(t)|$ by definition. 
By continuity, the assumption that $\gamma$ has curvature bounded from below by $c$ implies that either $a'(t) \ge c$ for all $t$ or $a'(t)\le -c$  for all $t$.  From either case, it is easy to deduce the claim.


\end{proof}


Let $J = [0, 1]\setminus \cup_{\varphi\in \CD} S_\varphi$. We note that $J$ depends on $\CE_\lambda$ and $\gamma$ but not on $\Phi$. Now we prove Theorem \ref{theorem:deterministic:d=2}. We first show that $|N_\Phi \cap \cup \gamma(S_\varphi)| \le \lambda N^{-1}$.

Note that as $\kappa > \lambda^{-1/2} (\log \lambda)^{3/4+\eps} N$, the condition of Lemma \ref{lemma:L2:d=2} holds.  Thus 
$$\max_{t\in S_\varphi} |\Phi(\gamma(t))| \ge \frac{1}{|S_\varphi|} \int_{S_\varphi} |\Phi(\gamma(t))|^2 dt  \ge 1/2.$$
Lemma \ref{lemma:Jensen:d=2} implies that 
$$|N_\Phi \cap \gamma(S_\varphi)| \ll \kappa \lambda + \log N -c \ll \kappa \lambda.$$
Hence
$$|N_\Phi \cap \cup_\varphi \gamma(S_\varphi)| \ll N^2 \kappa \lambda \ll \lambda N^{-1}$$
proving the first part of Theorem \ref{theorem:deterministic:d=2}.

Now for the second part, let $a\in J$. Let $\delta = N^{7}/\lambda$, $M=N^{7}$.

Denote $\tilde{I} = [a-\delta,a+\delta]$. 
 Again, Lemma \ref{lemma:Jensen:d=2} implies that for $\delta =M/\lambda  \le \lambda^{-1+\eps}$
$$|\{z \in B(a, \delta): \Phi(\gamma(z))=0\}|\le |\{z \in \tilde I + B(0, \delta): \Phi(\gamma(z))=0\}|\le c M + \log N -\log \max_{t \in \tilde{I}} |\Phi(\gamma(t))|.$$
Since $a \in J, \angle(\gamma'(a), \varphi) \ge \kappa, \forall \varphi \in \CD $. Thus for any $\mu \neq \mu'$, 
$$|\langle \mu-\mu', \gamma'(a) \rangle | \ge \kappa \|\mu -\mu'\| \gg \delta \|\mu -\mu'\|.$$
On the other hand, with $\delta =M/\lambda  \le \lambda^{-1+\eps}$ and $t=a+\tau$, write 
$$\langle \mu-\mu' ,\gamma(t) \rangle = \langle \mu-\mu', \gamma(a) \rangle + \langle \mu-\mu', \gamma'(a) \tau \rangle + O(\|\mu-\mu'\| \delta^2).$$

Because $|\langle \mu-\mu', \gamma'(a)  \rangle| \ge \kappa \|\mu -\mu'\| \gg \|\mu-\mu'\| \delta$ and  $\|\mu -\mu'\| \delta^2 \ll \lambda \lambda^{-2+\eps}\ll \lambda^{-1+\eps}$,
$$\left |\int_{-\delta}^\delta e^{i  \langle \mu-\mu', \gamma'(a) \tau \rangle } d\tau \right | \le \frac{1}{ |\langle \mu-\mu', \gamma'(a)  \rangle|}.$$
We thus have
$$\frac{1}{|\tilde{I}|} \left |\int_{\tilde{I}}   e^{i  \langle (\mu-\mu'), \gamma'(a) \tau \rangle } d\tau \right | \le \frac{1}{ \delta |\langle (\mu-\mu'), \gamma'(a)  \rangle|} \le \frac{1}{\delta \kappa \|\mu-\mu'\|} \le \frac{\lambda}{M \kappa \|\mu -\mu'\|}.$$
Lemma \ref{lemma:separation:d=2} says that $\|\mu-\mu'\| \gg \frac{\lambda}{\log^{3/2+\eps} \lambda}$. Hence
$$\frac{1}{|\tilde{I}|} \left |\int_{\tilde{I}}   e^{i  \langle (\mu-\mu'), \gamma(t) \rangle } dt \right |\le \frac{1}{|\tilde{I}|} \left |\int_{\tilde{I}}   e^{i  \langle (\mu-\mu'), \gamma'(a) \tau \rangle } d\tau \right |+ O(\|\mu-\mu'\| \delta^2) \le \frac{N^{3}\log^{3/2+\eps} \lambda }{M}.$$
Now we have 
$$\frac{1}{|\tilde{I}|} \left |\int_{\tilde{I}}   |\Phi(\gamma(t))dt\right |^2  \ge 1 - \sum_{\mu \neq \mu'} |a_\mu||a_\mu'| \frac{1}{|\tilde{I}|} |\int_{\tilde{I}}   e^{i  \langle (\mu-\mu'), \gamma(t) \rangle } dt | \gg 1.$$
So 
$$\max_{t \in \tilde{I}} |\Phi(\gamma(t))| >1/\sqrt{2}.$$
By Lemma \ref{lemma:Jensen:d=2}, it follows that 
$$|\{z \in B(a,N^{7}/\lambda): \Phi(\gamma(z))=0\}| \le M +\log N+O(1)\ll N^7.$$

\section{Checking Assumption \ref{assumption:equi}  for  $\CE_\lambda$ for almost all $\lambda$}\label{section:equi}

Assume
otherwise that for some $r\in \R^{2}$ with $|r| = \frac{1}{2\pi\lambda}$, the set $\{\langle \mu,  r\rangle, \mu \in \CE_\lambda\}$ can be covered by $k=O(N^{\ep_0})$ intervals $I_1,\dots, I_k$ of length $\beta=N^{-1}$ each in $[0,1]$. Consider the disjoint intervals $J_j = (j/3k, (j+1)/3k), 0\le j\le 3k-1$. Let $\ep_0<1$, each interval $I_i, 1\le i\le k$, intersects with at most two intervals $J_{i_1},J_{i_2}$, and so there is one interval $J_{j_0}$ which has no intersection with all $I_1,\dots, I_k$. Thus there is no $\mu\in \CE_\lambda$ such that 
\begin{equation}\label{eqn:J}
\langle \mu,  r\rangle \in J_{j_0}.
\end{equation}
In what follows we just use this simple consequence. Consider $\CE_\lambda$ of $\mu=(\mu_1,\mu_2) \in \Z^2$ such that $\mu_1^2 +\mu_2^2 = m$.
\begin{lemma}\label{lemma:equi:2} For almost all number $m$ up to $x$ that can be written as a sum of two squares, the set $\CE_\lambda$ satisfies Assumption \ref{assumption:equi}.
\end{lemma}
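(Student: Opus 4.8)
\textbf{Proof proposal for Lemma \ref{lemma:equi:2}.}

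The plan is to argue by contradiction through a second-moment (counting) argument, exploiting the simple consequence derived just above the lemma: if $\CE_\lambda$ fails Assumption \ref{assumption:equi} for some direction $r$ with $|r| = \frac{1}{2\pi\lambda}$, then there is an interval $J_{j_0}$ of length $\gg N^{-\ep_0}$ (say $\gg N^{-1}$ is enough, but $1/(3k)$ with $k = O(N^{\ep_0})$ is the honest bound) such that \emph{no} $\mu \in \CE_\lambda$ satisfies $\langle \mu, r\rangle \in J_{j_0}$. Writing $r = \frac{1}{2\pi\lambda} v$ with $|v| = 1$, this says that all the lattice points $\mu$ on the circle $|\mu| = \sqrt{m}$ avoid, upon projection onto the line spanned by $v$, a fixed sub-interval of length $\gg \lambda N^{-\ep_0}$ of the full projected range $[-\sqrt m, \sqrt m]$. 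In other words, the $N = r_2(m)$ points $\mu/\sqrt m$ on the unit circle have a gap (in the sense of a missing arc, once one accounts for the two preimages of each projected value) of angular width $\gg N^{-\ep_0}$ uniformly in $v$. I would phrase the contradiction as: ``for almost all $m$ that are sums of two squares, the projection of $\CE_\lambda$ onto every line is $\lambda N^{-100}$-dense'' (or any quantitative density beating $N^{-\ep_0}$), and then choose $\ep_0$ smaller than the gained exponent.

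The main step is a counting estimate: let $B(m)$ denote the number of pairs $\mu_1 \neq \mu_2 \in \CE_\lambda$ with $|\langle \mu_1 - \mu_2, v\rangle| \le \lambda N^{-\alpha}$ for the worst-case $v$; if the projection onto direction $v$ has a long missing interval, then, since consecutive projected values must be close on both sides of the gap, one can extract many pairs $(\mu_1,\mu_2)$ with $\mu_1 - \mu_2$ nearly orthogonal to $v$, i.e. $|\langle \mu_1 - \mu_2, v^\perp\rangle|$ small — but this is exactly a pair of lattice points on the same circle whose difference vector lies in a thin cone, and $B_\lambda$-type bounds (the maximal number of lattice points on an arc of length $\sqrt\lambda$, which is $O(\log\lambda)$, see the discussion after Theorem \ref{theorem:BR}, or Lemma \ref{lemma:separation:d=2}) control how many such pairs can exist. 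A cleaner route, which I would prefer, mirrors the proof of Lemma \ref{lemma:separation:d=2}: bound
$$\sum_{m \le x}^{'} \mathbf 1_{\{\CE_\lambda \text{ has a missing projection-arc of width} \ge N^{-\ep_0}\}}$$
by a sum over difference vectors $v = \mu_1 - \mu_2$ of the number of $m \le x$ for which such a $v$ arises, then use $\sum_{n \le x} r_2(n)^2 \ll x\log x$ (or the divisor-type bound on $r_2$) together with Landau's theorem $|\{m \le x : m = \square + \square\}| \gg x/\sqrt{\log x}$ to conclude that the exceptional set has density $o(1/\sqrt{\log x})$, hence is negligible among sums of two squares.

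The hard part will be making the geometric implication ``long missing projection-interval $\Rightarrow$ many near-collinear difference vectors on the circle'' both correct and quantitatively efficient: a single long gap only forces \emph{one} pair of consecutive points to straddle it, so one really needs to iterate — sort the $N$ projected values, observe that if the \emph{largest} gap among all $N$ values in one direction is $\gg \lambda N^{-\ep_0}$ then, since the total range is $2\sqrt m = 2\lambda/(2\pi)$, the average gap is $\sim \lambda/N$, so having one gap of size $\gg \lambda N^{-\ep_0}$ with $\ep_0 < 1$ is not automatically contradictory and one must instead use that \emph{every} direction has such a gap, or combine with the separation/repulsion estimate of Lemma \ref{lemma:separation:d=2} to say that near the gap many points must pile up within a short interval, producing $\gg N^{1-\ep_0}$ difference vectors of length $\ll \lambda/N^{1-\ep_0}$, which already contradicts $B_\lambda = O(\log\lambda)$ once $\ep_0$ is small. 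I expect the bookkeeping of exponents here, and the reduction to ``almost all $m$'' via the divisor bound on $r_2$ exactly as in Lemma \ref{lemma:separation:d=2}, to be where the real care is needed; the rest is routine.
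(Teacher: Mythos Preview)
Your proposal has a genuine gap: the second-moment / difference-vector counting strategy you outline does not close, and the paper takes a completely different route.

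The core difficulty is the one you yourself flag and do not resolve. Failure of Assumption \ref{assumption:equi} for some direction $r$ gives you (at best) that the $N$ projections $\langle \mu, r\rangle$ either have one large gap of size $\gg N^{-\ep_0}$, or can be covered by $O(N^{\ep_0})$ intervals of length $N^{-1}$. Neither of these forces enough \emph{spatial} clustering of the $\mu$ on the circle $|\mu|=\sqrt m$ to contradict $B_\lambda$-type bounds. Indeed, by pigeonhole the covering hypothesis puts $\gtrsim N^{1-\ep_0}$ points into a single projection-interval of length $N^{-1}$; but the preimage of such an interval under $\theta\mapsto\cos\theta$ is a pair of arcs whose angular width can be as large as $N^{-1/2}$ (near the extremes of cosine), hence arc length $\sim\sqrt m\,N^{-1/2}$. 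For typical $m$ one has $N=r_2(m)=(\log m)^{O(1)}$, so this arc length is enormously larger than $\sqrt\lambda\sim m^{1/4}$, and no contradiction with $B_\lambda=O(\log\lambda)$ or with Lemma \ref{lemma:separation:d=2} arises. Your fallback of summing over $m$ as in Lemma \ref{lemma:separation:d=2} also stalls, because the bad direction $r$ depends on $m$ in an uncontrolled way, so there is no fixed difference-vector set to sum over.

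What the paper actually does is invoke the Erd\H{o}s--Hall theorem on the angular discrepancy of Gaussian integers of fixed norm: for almost all $m\in G(x)$ (after reducing to $m$ with only prime factors $\equiv 1\pmod 4$), the angles $\alpha_\mu$ of the points $\mu\in\CE_\lambda$ are equidistributed on $[0,1]$ with discrepancy $\Delta_m < r_2(m)/(\log x)^{\frac12\log(\pi/2)-\eps}$. Since $\langle \mu, r\rangle$ is a smooth function of $\alpha_\mu$, the missing interval $J_{j_0}$ of length $1/(3k)\gg N^{-\ep_0}$ must contain at least $N/(3k)-\Delta_m$ projections; combining $N\ll(\log x)^{O(1)}$ for almost all $m$ with the discrepancy bound shows this count is positive for $\ep_0$ small, a contradiction. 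The key input you are missing is precisely this equidistribution-with-discrepancy result; without it, an elementary counting argument of the type you propose does not seem to suffice.
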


As Assumption \ref{assumption:equi} is on the angles $\alpha_\mu$ of the vectors $(\mu_1,\mu_2) = \sqrt{m} e^{2\pi i \alpha_\mu}$ in $\CE_\lambda$, it suffices to restrict to the set $G(x)$ of $m$ of prime factors congruent with 1 modulo 4 (see \cite{EH}). Indeed, let $D^2$ denote the product of prime factors that are congruent with 3 modulo 4 of $m$, then in any representation of $m$ as $a^2+b^2$, we have $D|a$ and $D|b$, so that $D$ does not affect the angles. Moreover, none of these angles  is influenced by the power of 2 dividing $m$ because if this power is even, the angles are unchanged and if it is odd there is a rotation by $\pi/4$.  We define the discrepancy of the angles $\alpha_\mu$ of the vectors $(\mu_1,\mu_2)$ in $\CE_\lambda$ as follows
$$\Delta_m = \max\Big\{\big | \#\{\alpha_\mu \in [\alpha_1,\alpha_2] \mod 1, \mu\in \CE_\lambda\} - (\alpha_1-\alpha_2) r_2(m)\big |, 0\le \alpha_1\le \alpha_2\le 1 \Big\}.$$
Denote also 
$$R_0(x) = (A +o(1))\frac{x}{\sqrt{\log x}}, A = \frac{1}{2\sqrt{2}} \prod_p (1-\frac{1}{p^2})^{1/2}.$$
Note that $R_0(x)$ is the number of $m\le x$ whose prime divisors are congruent with 1 mod 4 (see again \cite{EH}). Lemma \ref{lemma:equi:2} easily follows from the following result by Erd\H{o}s and Hall. 

\begin{theorem}\cite{EH}\label{theorem:EH} Let $\eps>0$ be fixed. Then for all but $o(R_0(x))$ integers $m \in G(x)$ we have
\begin{equation}\label{eqn:disc:2}
\Delta_m < \frac{r_2(m)}{(\log x)^{\frac{1}{2} \log \frac{\pi}{2} -\eps}}.
\end{equation}
\end{theorem}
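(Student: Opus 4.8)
The plan is to follow the classical Erd\H{o}s--Tur\'an route. Write $e(t):=e^{2\pi i t}$ and $S_\ell(m):=\sum_{\mu\in\CE_\lambda}e(\ell\alpha_\mu)$ for $\ell\ge1$. The Erd\H{o}s--Tur\'an inequality gives, for any integer $H\ge1$,
$$\Delta_m\ \ll\ \frac{r_2(m)}{H}\ +\ \sum_{\ell=1}^{H}\frac1\ell\,|S_\ell(m)|,$$
so, taking $H$ a fixed power of $\log x$, the statement reduces to controlling the right-hand side on average over $m\in G(x)$: it is enough to bound $\E_{m\in G(x)}\bigl[r_2(m)^{-1}\sum_{\ell\le H}\ell^{-1}|S_\ell(m)|\bigr]$ by a negative power of $\log x$, after which Markov's inequality (together with the easy one-sided fact that $r_2(m)$ is of its normal order for all but $o(R_0(x))$ of $m$) finishes. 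The first term $r_2(m)/H$ is harmless for $H$ a sufficiently large fixed power of $\log x$.

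The point is that $S_\ell$ is multiplicative in $m$ on $G$. For $m=\prod_jp_j^{a_j}$ with every $p_j\equiv1\ (4)$, factor $p_j=\pi_j\bar\pi_j$ in $\Z[i]$ and set $\theta_p:=\frac1{2\pi}\arg\pi_p$. The Gaussian integers of norm $m$ are, up to the four units, the products $\prod_j\pi_j^{b_j}\bar\pi_j^{a_j-b_j}$ with $0\le b_j\le a_j$, of argument $2\pi\sum_j(2b_j-a_j)\theta_{p_j}$; summing over the units annihilates $S_\ell(m)$ unless $4\mid\ell$, while for $\ell=4\ell'$
$$S_\ell(m)\ =\ 4\prod_j\Bigl(\sum_{b=0}^{a_j}e\bigl(4\ell'(2b-a_j)\theta_{p_j}\bigr)\Bigr).$$
Hence $m\mapsto|S_\ell(m)|/r_2(m)=\prod_jG_\ell(p_j^{a_j})$ is multiplicative, supported on $G$, bounded by $1$, with $p^a$--factor a normalised Dirichlet kernel in $\theta_p$; in particular $G_\ell(p)=|\cos(8\pi\ell'\theta_p)|$. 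By Hecke's equidistribution theorem for Gaussian primes --- with a power--saving error from the zero--free region of the relevant Hecke $L$--functions --- for each fixed $\ell'\ne0$ the angles $\{4\ell'\theta_p\bmod1\}$ equidistribute as $p$ ranges over primes $\equiv1\ (4)$, so that, for every fixed real $s>0$,
$$\sum_{\substack{p\le y\\ p\equiv1\ (4)}}\frac{1-G_\ell(p)^s}{p}\ =\ \bigl(c_s+o(1)\bigr)\log\log y,\qquad c_s:=\tfrac12\Bigl(1-\int_0^1|\cos2\pi t|^s\,dt\Bigr)>0,$$
the higher prime powers contributing only $O(1)$ to each sum.

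The last step estimates a moment of the bounded multiplicative function $m\mapsto\prod_jG_\ell(p_j^{a_j})$ over $G(x)$ by a Hal\'asz--Wirsing mean--value theorem (equivalently, a Selberg--Delange analysis of its Dirichlet series, which factors through Hecke $L$--functions): this gives $\sum_{m\in G(x)}\bigl(|S_\ell(m)|/r_2(m)\bigr)^s\ \ll\ R_0(x)(\log x)^{-c_s}$, uniformly in $\ell\le H$. Feeding this back into the Erd\H{o}s--Tur\'an average and optimising over the moment $s$ produces a power saving $(\log x)^{-\eta+o(1)}$; already $s=1$ gives $\eta=\tfrac12(1-\tfrac2\pi)$, and the sharper analysis of \cite{EH} raises $\eta$ to $\tfrac12\log\tfrac{\pi}{2}$, yielding \eqref{eqn:disc:2} up to the loss of $\eps$. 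The main obstacle is exactly this optimisation and the attendant uniformity: one must make the mean--value estimate quantitative (a genuine power of $\log x$, not merely $o(1)$) and uniform in $\ell\le H$, invoke Hecke's equidistribution with an effective error valid uniformly over the harmonics that occur, and balance the choice of moment against the error bookkeeping delicately enough to reach the constant $\tfrac12\log\tfrac{\pi}{2}$ --- this is the technical heart of the argument of Erd\H{o}s and Hall, for which we refer to \cite{EH}.
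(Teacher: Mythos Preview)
The paper does not prove this theorem at all: Theorem~\ref{theorem:EH} is quoted directly from Erd\H{o}s--Hall \cite{EH} and used as a black box in the verification of Assumption~\ref{assumption:equi}. Your proposal goes further than the paper by sketching the actual Erd\H{o}s--Hall argument (Erd\H{o}s--Tur\'an inequality, multiplicativity of the Weyl sums $S_\ell(m)$ over $G(x)$, Hecke equidistribution of the angles $\theta_p$, and a mean-value estimate for the resulting multiplicative function), which is indeed the route taken in \cite{EH}.

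Your sketch is accurate in outline and honest about its limits: you correctly note that the naive first-moment bound only reaches the exponent $\tfrac12(1-\tfrac2\pi)$ and that obtaining the sharp $\tfrac12\log\tfrac{\pi}{2}$ requires the more delicate optimisation carried out in \cite{EH}, to which you ultimately defer. Since the paper itself simply cites \cite{EH} without any argument, your proposal is not in conflict with the paper's treatment --- it is a strictly more informative gloss on the same citation. There is no gap to flag; if anything, one could trim your sketch to a one-line reference to \cite{EH}, matching the paper, or keep it as a helpful expansion for the reader.
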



We can choose $\ep = .001$ and apply this Theorem to a translation $[\alpha_1, \alpha_2]$ of $J_{j_0}$ to get that the number of $\mu\in \CE_{\lambda}$ with $\langle \mu,  r\rangle \in J_{j_0}$ is at least 
$$N|J_{j_0}|-\frac{r_2(m)}{(\log x)^{\frac{1}{2} \log \frac{\pi}{2} -\eps}} = \frac{N}{3k}-\frac{N}{(\log x)^{\frac{1}{2} \log \frac{\pi}{2} -\eps}}.$$

Since $\sum_{m\le x}r_2(m) = (\pi +o(1)) x$, for almost all $m\in G(x)$ we have $N=r_2(m) \ll \log^{O(1)}(x)$. Thus in this case $k=o\left ((\log x)^{\frac{1}{2} \log \frac{\pi}{2} -\eps}\right )$, and so $J_0$ would contain at least  one point of the set $\{\langle \mu, r\rangle, \mu \in \CE_\lambda\}$, a contradiction.

\section{Proof of Theorem \ref{theorem:subinterval:1&2}}\label{smoothappx}

Under the assumptions of Theorem \ref{theorem:general:d>1}, we deduce Theorem \ref{theorem:subinterval:1&2} from Theorem \ref{theorem:universality1}. The deduction of Theorem \ref{theorem:subinterval:1&2} from Theorem \ref{theorem:universality2} under the setting of Theorem \ref{theorem:general} is completely analogous. 

The task is to pass from smooth test functions to indicator functions. 

Let $l_j = |I_j|=O(1)$. Let $c$ be the constant in Theorem \ref{theorem:real:macro}, and let $\alpha$ be a sufficiently small constant depending on $c$ and $k$. Let $G_j$ be a smooth function that approximates the indicator function $\textbf{1}_{[-l_j/2, l_j/2]}$; in particular, let $G_j$ be supported on $\left [-l_j/2-  N^{-\alpha}, l_j/2+ N^{-\alpha}\right ]$ such that $0\le G_j\le 1$, $G_j = 1$ on $[-l_j/2, l_j/2]$, and $\norm{\triangledown^a G_j} \le CN^{C\alpha}$ for all $0\le a \le 2k$. 

Let $x_j$ be the middle point of $I_j$. We will approximate $\CZ_{j}$ by  
$$\CT_j := \sum G_j(\zeta-x_j)$$
where $\zeta$ runs over all roots of $H$.  

By Theorem \ref{theorem:universality1}, we have
\begin{equation}
\E_{\ep_\mu} \prod_{j=1}^{k} \CT_j - \E_{\Bg} \prod_{j=1}^{k} \CT_j = O\left (N^{-c+C\alpha}\right ) = O\left (N^{-\alpha}\right ) \label{interm1}
\end{equation}
by choosing $\alpha$ sufficiently small.

We will show that for each $j$,
\begin{equation}
\E_{\ep_\mu} |\CT_j-\CZ_j|^{k} = O\left (N^{-\alpha}\right )\label{interm2}
\end{equation}
and for any constant $\alpha'$, 
\begin{equation}
\E_{\ep_\mu} \CT_{j}^{k} = O\left (N^{\alpha'}\right )\label{interm3}.
\end{equation}
Assuming these results, with $\alpha' = \alpha/2k$, by H\"older's inequality and the triangle inequality, we have
$$\E_{\ep_\mu} \prod_{j=1}^{k} \CZ_j - \E_{\ep_\mu} \prod_{j=1}^{k} \CT_j = O\left (N^{-\alpha/k+\alpha'}\right )= O\left (N^{-\alpha/2k}\right ).$$
Combining this with the same bound for the gaussian case and with \eqref{interm1}, we obtain the desired result.

It remains to prove \eqref{interm2} and \eqref{interm3}. The strategy is first to reduce to the Gaussian case using Theorem \ref{theorem:universality1} and then work with the Gaussian case.

Let us prove \eqref{interm3}. By Theorem \ref{theorem:universality1}, we have
\begin{equation}
\E_{\ep_\mu} \CT_{j}^{k} - \E_{\Bg} \CT_{j}^{k} = O\left (N^{-\alpha'}\right )\nonumber.
\end{equation}
Therefore, it suffices to settle the Gaussian case. Note that $\CT_j$ is bounded by $X_j$ defined to be the number of roots of $H$ in the interval $[x_j-l,x_j+ l]$ for $l=l_j/2+N^{-\alpha} = O(1)$. By Jensen's inequality, we have
\begin{eqnarray}
X_j=O(1) \log \frac{K}{|H(x)|}\nonumber 
\end{eqnarray}
where $K = \max_{z\in B(x_j, 2l)}|H(z)|$.  
Thus,
$$\E_{\Bg} X_j^{k} = O(1) \E |\log K|^{k} +O(1)\E |\log |H(x_j)||^{k} .$$
Since $H(x_j)$ is standard gaussian, $ \E |\log |H(x_j)||^{k} = O(1)$. Furthermore, as $|H(x_j)|\le K = O\left (\frac{1}{\sqrt N}\sum_{\mu}|\ep_{\mu, 1}|+|\ep_{\mu, 2}|\right )$, we have
$$ \E |\log |K||^{k} = O(\log^{k} N)$$
proving the desired bound.

Finally, we prove \eqref{interm2}. Since $|\CT_j - \CZ_j|$ is less than the number of roots of $H$ in a union of two intervals of length $N^{-\alpha}$. Approximating the indicator function of each of these intervals by a smooth test function supported on an interval of length $10 N^{-\alpha}$ and applying Theorem \ref{theorem:universality1} to this test function, it suffices to show that for any interval $J = [a, b]$ of length $b-a = O( N^{-\alpha})$, the number of roots of $H$ in $J$, which is denoted by $Y$ satisfies
$$\E_{\Bg} Y^{k} = O(N^{-\alpha}).$$

Assume that it holds for $k=1$. That is $\E_{\Bg} Y = O(N^{-\alpha})$. We have
$$\E_{\Bg} Y^{k} \le O(\E_{\Bg} Y) + \E_{\Bg} Y^{k}\textbf{1}_{Y\ge 2}.$$
By Lemma \ref{repulsionlmm},
$\P_{\Bg} (Y\ge 2)= O\left (N^{-3\alpha/2}\right )$. Since Assumption \eqref{condition:universality} holds true, $Y\le N^{\alpha/k}$ with probability at least $1 - O\left (N^{-A}\right )$ for any constant $A$. Therefore, by condition \eqref{tailex} of Assumption \eqref{condition:universality},
$$\E_{\Bg} \left (Y^{k}\textbf{1}_{Y\ge 2}\right )\le \E_{\Bg} \left (Y^{k}\textbf{1}_{2\le Y\le N^{\alpha/k}}\right ) + \E_{\Bg}\left ( Y^{k}\textbf{1}_{Y\ge N^{\alpha/k}}\right )=O\left (N^{-\alpha/2}\right ).$$

Thus, it remains to prove that $\E_{\Bg} Y = O(N^{-\alpha})$. By the Kac-Rice type formula (see, for instance, \cite[Theorem 2.5]{Far}), one has for every $x\in \R$, 
\begin{eqnarray} 
\E_{\Bg} Y &\le& \int_{a}^{b}\sqrt{\frac{\mathcal S(t)}{\mathcal P(t)^{2}}}dt ,\nonumber
\end{eqnarray}
where $\mathcal P(t)=\Var_{\Bg}(H(t))=1$, $\mathcal Q(t)=\Var_{\Bg}(  H'(t))= \frac{1}{N}\sum_{\mu} \left\langle\mu, \frac{1}{\lambda}\gamma'(t)\right\rangle^{2} = O\left (1\right )$, $\mathcal R(t) = \Cov_{\Bg} (H(t), H'(t))=0$, and $\mathcal S = \mathcal P\mathcal Q - \mathcal R^{2}=\mathcal P\mathcal Q$.
And so, for every $t$,
\begin{eqnarray} 
\frac{\mathcal S(t)}{\mathcal P(t)^{2}} = \frac{\mathcal Q(t)}{\mathcal P(t)} = O\left (1\right )\nonumber
\end{eqnarray}
and 
$$\E_{\Bg} Y =O(1) \int_{a}^{b} 1dt = O\left (N^{-\alpha}\right )$$
as desired.

\section{Proof of Theorems \ref{theorem:general} and \ref{theorem:general:d>1}}\label{section:proofcut}
In this section, we deduce Theorems \ref{theorem:general} and \ref{theorem:general:d>1} from Theorem \ref{theorem:subinterval:1&2}. 
To prove Theorem \ref{theorem:general:d>1}, we partition the interval $[0, \lambda]$ into $\lambda$ intervals $I_1, \dots, I_\lambda$ of length $1$ and apply Theorem \ref{theorem:subinterval:1&2} to every $k$-tuple of these intervals.

To prove Theorem \ref{theorem:general}, we partition the set $\CB_2$ into $M=O(\lambda)$ intervals $I_1, \dots, I_M$ each of length $O(1)$. Applying Theorem \ref{theorem:subinterval:1&2} to every $k$-tuple of these intervals, we get
\begin{equation}
\E_{\ep_\mu} \CZ_{\CB_2}^{k} = \E_{\Bg} \CZ_{\CB_2}^{k} + O(\lambda^{k}/N^{c})\label{interm4}
\end{equation}
where $\CZ_{\CB_2}$ is the number of zeros of $H$ in $\CB_2$.

Let $\CZ' = \CZ - \CZ_{\CB_2}$ be the number of zeros of $H$ in $[0, \lambda]\setminus \CB_2$. 
By \eqref{interm3}, the number of roots $\CZ_j$ of $H$ in each interval $I_j$ satisfies
$$\E_{\ep_\mu} \CZ_j^{h}= O(N^{\alpha})$$ 
for any small constant $\alpha$ and any $h\le k$. 

Thus,  $\E_{\ep_\mu} \CZ_{\CB_2}^{h} = O(\lambda^{h}N^{\alpha})$. By Theorem \ref{theorem:deterministic:d=2}, $\CZ'\ll \lambda N^{-1}$ a.e. Hence, by choosing $\alpha<1-c$
$$\E_{\ep_\mu} \CZ^{k} - \E_{\ep_\mu} \CZ_{\CB_2}^{k} \ll  \lambda^{k} N^{-1+\alpha} \ll \lambda^{k} N^{-c}.$$

This, together with \eqref{interm4}, give the desired result.

\section{Sketch of the proof of Theorem
  \ref{theorem:real:macro}}\label{pgcomplex}

To make the note self-consistent, we present here the main ideas of the
proof; the reader is invited to conslute \cite{OV} for a complete treatment. We first show universality of the complex roots and then deduce Theorem \ref{theorem:real:macro} from it. 
\begin{theorem}[global universality, complex roots]\label{theorem:complex} Let $H(z)=\sum_{\mu} f_\mu(z)$, with $H(z)$ be a random function with $f_{\mu}$ satisfying Assumption \ref{condition:universality}. Let $k$ be an integer constant. For any complex numbers $z_1, \dots, z_k$ in $[0, T]\times [-c, c]$, and for every smooth function $G: \mathbb{C}^{k}\to \mathbb{C}$ supported on $B(0, c)^{k}$ with $|{\triangledown^aG}(z)|\le 1$ for all $0\le a\le 2k+4$ and $z\in \C^{k}$, we have
\begin{equation}
\E_{\xi} \sum_{i_1,\dots,i_k} G (\zeta_{i_1},\dots, \zeta_{i_k}) -\E_{\Bg} \sum_{i_1,\dots,i_k} G (\zeta_{i_1},\dots, \zeta_{i_k}) = O(N^{-c}),\label{gcomplexb}
\end{equation}
where the $\zeta_{i}$ are the roots of $H$, the sums run over all possible assignments of $i_1, \dots, i_k$ which are not necessarily distinct. The constant $c$ here might be different from the constants in Assumption \ref{condition:universality}.
\end{theorem}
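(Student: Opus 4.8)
The plan is to reduce the universality of the zero-counting statistic to a universality statement for finitely many point evaluations of $\log|H|$, and then prove the latter by a Lindeberg-type coefficient-by-coefficient replacement. The first step is the Poincar\'e--Lelong formula: for $H$ analytic and nonvanishing on the boundary of the region in question one has $\frac{1}{2\pi}\Delta\log|H|=\sum_{\zeta:H(\zeta)=0}\delta_\zeta$ as distributions (with multiplicity), and since the indices in $\sum_{i_1,\dots,i_k}G(\zeta_{i_1},\dots,\zeta_{i_k})$ are allowed to repeat, this sum is exactly the pairing of $G$ with the $k$-fold tensor power of that measure. Integrating by parts in each variable,
\[
\E_\xi \sum_{i_1,\dots,i_k} G(\zeta_{i_1},\dots,\zeta_{i_k})=\frac{1}{(2\pi)^k}\int \Big(\prod_{l=1}^k\Delta_{z_l}\Big)G(z_1,\dots,z_k)\;\E_\xi\Big[\prod_{l=1}^k\log|H(z_l)|\Big]\,dA(z_1)\cdots dA(z_k),
\]
and likewise for the Gaussian model; since $\prod_l\Delta_{z_l}G$ is a fixed smooth function with the derivative bounds granted by the hypothesis $|\nabla^aG|\le1$, $a\le2k+4$, and with compact support, it is enough to compare $\E_\xi\prod_l\log|H(z_l)|$ with $\E_{\Bg}\prod_l\log|H(z_l)|$ up to $O(N^{-c})$ uniformly in the relevant $z_1,\dots,z_k$; the locality here also explains why the length $T$ of the interval plays no role. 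A preliminary reduction: since $\int\prod_l\Delta_{z_l}G=0$, and more generally integrating out any one variable kills any factor independent of it, we may --- choosing for each $z_l$ a reference point $x_l'$ with $|H(x_l')|\ge e^{-N^{c_1}}$, which exists with overwhelming probability by Condition~\eqref{smallballcond} for a small $c_1$ of our choice --- replace $\log|H(z_l)|$ by $\log|H(z_l)/H(x_l')|$, a quantity bounded above by $O(N^{c_1})$ thanks to Condition~\eqref{boundedness}, whose negative part sits near the $O(N^{c_1})$ zeros of $H$ by Jensen's formula.

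Second, I would regularize. Replace each $\log|H(z_l)|$ by a smooth function $L(z_l)$ of $(\Re H(z_l),\Im H(z_l))$ agreeing with it on $\{|H(z_l)|\ge N^{-C}\}$, truncated from below, and with polynomially bounded derivatives $\|\nabla^aL\|_\infty=O(N^{O_a(C)})$; the point of the polynomial (rather than sub-exponential) truncation scale is that the Lindeberg step below needs a test function with polynomially bounded derivatives. The error of this truncation --- the contribution of the events $\{|H(z_l)|<N^{-C}\}$ to $\E\prod_l\log|H(z_l)|$ --- must be shown to be $O(N^{-c})$, and this is where the probabilistic hypotheses are used: one combines the deterministic bound that $H$ has only $O(N^{c_1})$ zeros on the good event (Jensen, via Conditions~\eqref{boundedness} and~\eqref{smallballcond}), so that $\int|\prod_l\Delta_{z_l}G|\,|\log|H||$ is controlled there; the anti-concentration of Condition~\eqref{smallballcond} to bound the measure of the bad set; and the tail Condition~\eqref{tailex} to dispose of the residual events of probability $\le N^{-A}$ --- with the parallel (classical) estimates on the Gaussian side. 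One is then left to compare $\E_\xi\prod_lL(z_l)$ with $\E_{\Bg}\prod_lL(z_l)$, that is, the expectation of a fixed bounded smooth functional $\Phi$ of the finite vector $(\Re H(z_l),\Im H(z_l))_{l\le k}$, whose derivatives are polynomially bounded.

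Third, the Lindeberg swap. Enumerate $\CE=\{\mu_1,\dots,\mu_N\}$ and let $H^{(j)}$ be the random function in which $\xi_{\mu_1},\dots,\xi_{\mu_j}$ have been replaced by independent standard Gaussians; telescoping, $\E_{\Bg}\Phi-\E_\xi\Phi$ is a sum over $j=1,\dots,N$ of the effect of replacing the single coefficient $\xi_{\mu_j}$ by a Gaussian. Writing $\Phi$ as a function of $W+\xi_{\mu_j}\mathbf v$, with $W$ the contribution of all other coefficients and $\mathbf v=\big(N^{-1/2}f_{\mu_j}(z_l)\big)_{l\le k}$ (after a routine truncation of the atoms, whose effect is negligible by Condition~\eqref{boundedness}-type tail bounds and which makes the third and fourth moments finite), a fourth-order Taylor expansion of $\Phi$ around $W$ has its constant, first- and second-order terms matching for $\xi_{\mu_j}$ and the Gaussian because both have mean $0$ and variance $1$, while the third- and fourth-order remainders are bounded by $\|\mathbf v\|^3\sup|\nabla^3\Phi|+\|\mathbf v\|^4\sup|\nabla^4\Phi|$ times fixed moment factors. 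The delocalization Condition~\eqref{delocalization} makes each $\|\mathbf v\|$ gain a factor $N^{-c}$ over the normalization $\sum_\mu\|\mathbf v_\mu\|^2=O(1)$, so that $\sum_{j=1}^N\|\mathbf v_{\mu_j}\|^3=O(N^{-c})$; since $\sup|\nabla^a\Phi|=O(N^{O(C)})$ and $C$ is small, the telescoped sum is $O(N^{-c'})$ on the overwhelming-probability event where Conditions~\eqref{boundedness} and~\eqref{Dev} hold, and the excluded events are reabsorbed through Condition~\eqref{tailex}.

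The step I expect to be the main obstacle is the error analysis of the regularization: one has to reconcile the fact that the Lindeberg step forces the logarithm to be truncated at a polynomial scale $N^{-C}$ with the fact that Condition~\eqref{smallballcond} controls $|H|$ from below only at a sub-exponential scale, and it is precisely for this that one needs the centering $\log|H|\mapsto\log|H(\cdot)/H(x')|$, the deterministic Jensen bound on the number of zeros, and the tail Condition~\eqref{tailex}, all balanced so that each error has the uniform form $N^{-c''}$ and the $k$-fold product does not amplify the loss. Once Theorem~\ref{theorem:complex} is in hand, deducing Theorem~\ref{theorem:real:macro} is comparatively soft: real roots are exactly the complex roots on the real axis, and a smooth statistic of them is recovered from the complex statistic by taking $G$ supported in a thin complex neighbourhood of $\R$, using the derivative-growth Condition~\eqref{Dev} (again via Jensen) to ensure that each real root is shadowed by only $O(1)$ nearby complex roots.
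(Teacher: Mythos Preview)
Your plan follows the same architecture as the paper's sketch: Green/Poincar\'e--Lelong to convert the linear statistic of zeros into integrals of $\log|H|$, then a regularization of the logarithm at a polynomial scale, then a Lindeberg replacement exploiting Condition~\eqref{delocalization}. Two packaging choices differ but are harmless: you skip the paper's preliminary Fourier reduction to tensor-product test functions $G=G_1\otimes\cdots\otimes G_k$ and work directly with the $k$-variable Green identity, and you introduce the centering $\log|H(z_l)|\mapsto\log|H(z_l)/H(x_l')|$ (which the paper does not use) to cap the integrand from above.

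The one substantive step you omit is the Monte--Carlo sampling. After writing $X_j^H=-\frac{1}{2\pi}\int\log|H|\,\Delta G_j$, the paper approximates each such integral by an empirical average $S_j=\frac{1}{m_0}\sum_i\log|H(w_{j,i})|\,\Delta G_j(w_{j,i})$ over $m_0$ independent uniform points in $B(z_j,c)$, with the approximation error controlled through Markov's inequality by $\frac{1}{m_0}\int_{B(z_j,c)}|\log|H||^2$; the latter is bounded on the good event by applying Harnack's inequality to the harmonic part of $\log|H|$, which is how the one-point lower bound of Condition~\eqref{smallballcond} is upgraded to an $L^2$ bound on the whole ball. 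Only after this discretization does the paper smooth the logarithm and run Lindeberg on the finite vector $(H(w_{j,i}))_{j,i}\in\C^{km_0}$. Your alternative --- apply Lindeberg to $(H(z_l))_{l\le k}$ uniformly in $(z_1,\dots,z_k)$ and then integrate the pointwise bound --- is cleaner to state, but the regularization error you correctly flag as the main obstacle is then genuinely harder to close: centering bounds $\log|H/H(x')|$ from above by $O(N^{c_1})$, yet on the good event the harmonic part of $\log|H|$ can still sit at $-\Theta(N^{c_1})$ on a set of non-negligible area, so truncating at $-C\log N$ alters the integrand by $\Theta(N^{c_1})$ there, and Jensen's zero-count alone does not rescue this. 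The Monte--Carlo device is precisely what trades that pointwise difficulty for an $L^2$-averaged one, with the freedom to take $m_0$ polynomially large supplying the missing $N^{-c}$ gain.

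For the passage from complex to real roots, the paper's mechanism is more specific than your last paragraph: it proves a repulsion estimate (Lemma~\ref{repulsionlmm}) that two zeros in a disc of radius $\gamma$ occur with probability $O(\gamma^{3/2})$, established in the Gaussian model via Rouch\'e's theorem together with Condition~\eqref{Dev} and then transferred by the complex-root universality just proved. This is what guarantees that a smooth cutoff supported in a thin strip around $\R$ isolates the real roots up to an $O(N^{-c})$ error.
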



\subsection{Sketch of proof of Theorem \ref{theorem:complex}}
By approximation arguments using Fourier expansion, we can reduce the problem to proving \eqref{gcomplexb} for $G$ of the form 
\begin{equation}\label{h2}
 G(w_1,\dots, w_m) = G_1(w_1)\dots G_k(w_k)
 \end{equation} where for each $1\le i\le k$, $G_i:\mathbb{C}\to \mathbb{C}$ is a smooth function supported in $B(0, 1/10)$ and $|{\triangledown^aG_{i}}|\le 1$ for all $0\le a\le 3$.

Let $X_j^{H} = \sum  G_j({\zeta}_i^{H}-z_j)$.
By induction on $k$, it suffices to show that 
\begin{eqnarray}
\ab{\E \prod_{j=1}^{k}X_j^{H}-\E \prod_{j=1}^{k} X_j^{\tilde H}}\le C\delta^{c}.\label{du5}
\end{eqnarray}
Let $A$ be a large constant and $c_1$ be a small positive constant. 
By the Green's formula, one has
\begin{equation}
X_j^{H} =\sum_{i=1}^{n} G_j({\zeta}_i^{H}- z_j)= -\frac{1}{2\pi}\int_{B( z_j, c)}\log |H(z)|\triangle G_j(z- z_j)dz.\label{sat1}
\end{equation}

In the next step, we show that the integral can be approximated by a finite sum with high probability. The technique is based on the Monte-Carlo Lemma, which is in fact a special case of Markov's inequality.
In particular, let $w_{j,1}, \dots, w_{j,m_0}$ be drawn independently at random on the ball $B( z_j, c)$, and let $S$ be the empirical average 
$$S: =  \frac{1}{2c^{2} m_0}\sum_{i=1}^{m_0} \log |H( w_{j,i})|\triangle G_j( w_{j,i}- z_j) .$$
Then by Markov's inequality, we have
\begin{eqnarray}
&&\P\left (\left |S-\frac{1}{2\pi}\int_{B( z_j, c)}\log |H(z)|\triangle G_j(z- z_j)\frac{dz}{\Area(B( z_j, c))}\right |\ge \lambda\right )\nonumber\\
&&\le \frac{1}{m\lambda^{2}} \int_{B( z_j, c)}\left |\log |H(z)|\triangle G_j(z- z_j)\right |^{2}\frac{dz}{\Area(B( z_j, c))} = \frac{O(1)}{m\lambda^{2}} \int_{B( z_j, c)}\left |\log |H(z)|\right |^{2}dz.\nonumber
\end{eqnarray}

Thus, to quantify the approximation of the integral by a finite sum, we need to control the 2-norm of $\log|H|$ on the balls $B(z_j, c)$. That is to bound the function $|H|$ from above and away from 0. These bounds are attained from conditions \eqref{smallballcond} and \eqref{boundedness} of Assumption \eqref{condition:universality}. Note that condition \eqref{smallballcond} only gives a lower bound of $|H|$ for a certain $x\in B( z_j, c)$. To pass from this to a bound that works for all $z\in B( z_j, c)$, one can make use of Harnack's inequality.

Note that on the tail event of conditions \eqref{smallballcond} and \eqref{boundedness}, the approximation is not valid. One has to instead show that the contribution of $X_{j}^{H}$ on that event is negligible. That's when condition \eqref{tailex} becomes handy.

Going back to the good event when we can approximate the integral by a finite sum, we reduce the task of comparing $X_{j}^{H}$ and $X_{j}^{\tilde H}$ to comparing $\sum_{i=1}^{m_0} \log |H( w_{j,i})|\triangle G_j( w_{j,i}- z_j)$ and $\sum_{i=1}^{m_0} \log |\tilde H( w_{j,i})|\triangle G_j( w_{j,i}- z_j)$.  This is done by the Lindeberg swapping argument (see for instance \cite{TV} and the references therein). In particular, by smoothing the $\log$ function, we can further reduce the task to showing that 
for any deterministic $w_{j,i}$ with $1\le j\le k$, $1\le i\le m_0$, and for a smooth function $L:\mathbb{C}^{km_0}\to \mathbb{C}$, 
\[\left|\E L\left( H( w_{j,i})\right)_{ji} -\E L\left(\tilde H( w_{j,i}) \right)_{ji} \right|\le CN^{-c}.
 \]
The swapping method uses the triangle inequality to bound the above difference by a sum of $2N$ differences each of which involves changing only one random variable to gaussian. For example, one of these differences is 
$\E L\left( H_0( w_{j,i})\right)_{ji} -\E L\left(H_1( w_{j,i})\right )$ where $H_0(z) = H(z) = \sum_{\mu} \xi_{\mu}f_{\mu}(z)$ and $H_1(z) = \tilde \xi_{\mu_1}f_{\mu_1}(z) + \sum_{\mu\neq \mu_1} \xi_{\mu}f_{\mu}(z)$. We then Taylor expand the function $L\left( H_0( w_{j,i})\right)_{ji}$ (and $L\left( H_1( w_{j,i})\right)_{ji}$) as a function of one variable $\xi_{\mu}$ (and $\tilde \xi_{\mu}$ respectively). Making use of the assumption that the first and second moments of $\xi_{\mu}$ and $\tilde \xi_{\mu}$ are the same, one can see that upon taking expectation, the first three terms in the Taylor expansions cancel out, leaving us with a small error term. Adding up these errors terms, one obtains $N^{-c}$ as desired. The reader may notice that this is quite similar to a classical proof of the Central Limit Theorem using the swapping argument.

\subsection{Universality of real roots: sketch of proof of Theorem \ref{theorem:real:macro}} \label{pgreal}

As in the proof of Theorem \ref{theorem:complex}, we can reduce the problem to showing that
\begin{eqnarray}
\ab{\E \left(\prod_{j=1}^{k}X_{ {x}_i, G_i, \mathbb{R}}^{H}\right) -\E \left(\prod_{j=1}^{k}X_{ {x}_i, G_i, \mathbb{R}}^{\tilde H}\right)} \le C' N^{-c},\label{du6}
\end{eqnarray}
where $X^{H}_{ {x}_i, G_i, \mathbb{R}} = \sum_{\zeta_j^{H}\in\mathbb{R}}G_i(\zeta_j^{ H}- x_i)$, $\zeta_j^{H}$ are the roots of $H$, and $H_i:\mathbb{R}\to\mathbb{C}$ are smooth functions supported on $[-c, c]$ and $B(0, c)$ respectively, such that
 $|{\triangledown^{a}G_i}(x)| \le 1
$
for all $1\le i\le k $, $x\in \R$,  and $0\le a\le 3$.

The idea is to reduce it to Theorem \ref{theorem:complex}. This is done by showing that the number of complex zeros near the real axis is small with high probability.
\begin{lemma}\label{repulsionlmm} We have
$$\P \left(\CZ_{H}{B( x,\gamma)}\ge 2\right) \le C\gamma^{3/2},
\qquad\text{for all } x\in [0, T] $$
where $\gamma=N^{-c}$ for any sufficiently small constant $c$.
\end{lemma}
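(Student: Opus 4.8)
The plan is to bound the probability that the analytic function $H$ has two or more zeros (counted with multiplicity) in the small disk $B(x,\gamma)$ by estimating the probability that $H(x)$ is small relative to $\sup_{B(x,2\gamma)}|H|$, and then invoking Jensen's formula. Concretely, if $H$ has at least two zeros in $B(x,\gamma)$, then by Jensen's inequality (as already used in Section \ref{sub:tailexsmooth}), one has $2 \le \CZ_{B(x,\gamma)} \le C\log\big(\sup_{z\in B(x,2\gamma)}|H(z)| / |H(x)|\big)$, so that $|H(x)| \le e^{-c'}\sup_{B(x,2\gamma)}|H(z)|$ for some absolute $c'>0$. Thus it suffices to show
\begin{equation}
\P\Big( |H(x)| \le e^{-c'}\sup_{z\in B(x,2\gamma)}|H(z)| \Big) \le C\gamma^{3/2}.\nonumber
\end{equation}

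The first step is to control the right-hand side from above. By Condition \eqref{boundedness} of Assumption \ref{condition:universality} (or directly by the crude bound $|H(z)| = O(1)\sum_\mu|\ep_\mu|$ established in Section \ref{sub:doundedness:smooth}), the quantity $\sup_{B(x,2\gamma)}|H|$ is at most $N^{O(1)}$ with probability $1-O(N^{-A})$ for any fixed $A$; on the complementary event we simply absorb the contribution into the error term since $\gamma = N^{-c}$ and $A$ may be taken large. The second step is a second-moment / anti-concentration estimate for $H(x)$ together with its derivative: writing $H(x) = \frac{1}{\sqrt N}\sum_\mu \ep_{\mu,1}g_\mu(x) + \ep_{\mu,2}h_\mu(x)$, in the Gaussian case $H(x)$ is a mean-zero Gaussian of variance $\frac1N\sum_\mu (g_\mu(x)^2 + h_\mu(x)^2) = 1$, and more generally the pair $(H(x),H'(x))$ is a non-degenerate Gaussian vector (the covariance matrix was computed to be $\mathrm{diag}(1,\alpha)$ with $\alpha = \Theta(1)$ after scaling, cf. the Kac--Rice computation in the introduction and the end of Section \ref{smoothappx}). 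Hence for any threshold $\delta$, $\P(|H(x)| \le \delta) = O(\delta)$, and in fact on the event $\{|H(x)|\le\delta,\ \CZ_{B(x,\gamma)}\ge 2\}$ one can do better: having two zeros in a disk of radius $\gamma$ forces $|H'(x)|$ to also be small, roughly $|H'(x)| = O(\gamma^{-1}\sup_{B(x,2\gamma)}|H|)$ is \emph{not} what we want — rather, the product $|H(x)||H'(x)|$ or the pair $(H(x),H'(x))$ both being in a box of the appropriate size has probability $O(\gamma)\cdot O(\gamma^{1/2})$; this is where the exponent $3/2$ comes from. More precisely, proximity of two roots $\zeta_1,\zeta_2 \in B(x,\gamma)$ gives $H(\zeta_1) = H(\zeta_2)=0$, so Taylor expanding around $x$ and using the second-derivative bound \eqref{secondDev} of Assumption \ref{condition:universality}, one deduces that both $|H(x)| = O(\gamma\,|H'(x)| + \gamma^2 M)$ and $|H'(x)| = O(\gamma M)$ where $M = \sup_{B(x,2\gamma)}|H''| = N^{O(1)}$. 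Feeding this into the joint anti-concentration bound $\P(|H(x)|\le a,\ |H'(x)|\le b) = O(ab)$ for the non-degenerate vector $(H(x),H'(x))$ yields $O(\gamma^2 M \cdot \gamma M) = O(\gamma^2 N^{O(1)})$, which is $\le \gamma^{3/2}$ once $c$ (in $\gamma = N^{-c}$) is small enough. In the Gaussian case the joint anti-concentration is immediate from the explicit density; in the general case it follows from Condition \ref{cond-variable} exactly as in the Esseen-type bounds underlying Theorem \ref{theorem:smallball}, though for Lemma \ref{repulsionlmm} as invoked in Section \ref{smoothappx} only the Gaussian case $\P_{\Bg}$ is needed.

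The third step is bookkeeping: split according to whether $\sup_{B(x,2\gamma)}|H| \le N^{B}$ (for a suitable constant $B$) or not; on the first event apply the anti-concentration estimate of step two with thresholds $a = \gamma^2 N^{B+O(1)}$, $b = \gamma N^{B+O(1)}$ to get $O(\gamma^3 N^{O(1)}) \le \tfrac12 C\gamma^{3/2}$; on the second event use the polynomial tail bound of step one to get $O(N^{-A}) \le \tfrac12 C\gamma^{3/2}$ by choosing $A$ large relative to $c$. Summing gives the claim. The main obstacle I anticipate is making the "two nearby roots force $(H(x),H'(x))$ jointly small" step quantitatively clean while keeping all the $N^{O(1)}$ factors harmless — i.e. checking that the Taylor-remainder and derivative-growth bounds from Assumption \ref{condition:universality} are strong enough that the loss is only polynomial in $N$, so that a sufficiently small choice of the exponent $c$ in $\gamma = N^{-c}$ beats it. Everything else is a routine combination of Jensen's formula and Gaussian (or Esseen-type) anti-concentration.
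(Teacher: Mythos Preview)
Your opening reduction via Jensen's inequality is a false start: the event $\{|H(x)| \le e^{-c'}\sup_{B(x,2\gamma)}|H|\}$ has probability of order $\gamma$, not $\gamma^{3/2}$, since $\sup_{B(x,2\gamma)}|H| = |H(x)| + O(\gamma)$ and so the event is essentially $\{|H(x)| = O(\gamma)\}$. Fortunately you abandon this and pivot to the correct mechanism, namely that two nearby zeros force the \emph{pair} $(H(x),H'(x))$ into a small box.

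That second approach is sound and genuinely different from the paper's. Your key implication ``two zeros in $B(x,\gamma)$ $\Rightarrow$ $|H'(x)| \le \gamma M$ and $|H(x)| \le \tfrac32\gamma^2 M$ with $M=\sup_{B(x,\gamma)}|H''|$'' is correct: writing $R(z)=H(z)-H(x)-H'(x)(z-x)$ one has $R'(z)=H'(z)-H'(x)$, so $|R(\zeta_1)-R(\zeta_2)| \le M\gamma\,|\zeta_1-\zeta_2|$ along the segment, while $R(\zeta_1)-R(\zeta_2)=-H'(x)(\zeta_1-\zeta_2)$; dividing gives $|H'(x)|\le M\gamma$ (the double-zero case is immediate from $H'(\zeta)=0$). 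Combined with the joint anti-concentration bound $\P_{\Bg}(|H(x)|\le a,\ |H'(x)|\le b)=O(ab)$ (valid since $H(x),H'(x)$ are independent Gaussians of variance $\Theta(1)$) this yields $O(\gamma^3 M_0^2)$ on the event $\{M\le M_0\}$, plus a Gaussian tail for $\{M>M_0\}$.

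The paper instead linearises $g(z)=H(x)+H'(x)(z-x)$, sets $p=H-g$, and applies Rouch\'e: two zeros of $H$ force $\min_{\partial B}|g|\le \max_{\partial B}|p|$. Since $g$ has real coefficients, $\min_{\partial B}|g|=\min|g(x\pm 2\gamma)|$, and each $g(x\pm 2\gamma)$ is Gaussian with variance $\ge \tfrac12\Var(H(x))$, giving $\P(|g(x\pm 2\gamma)|\le \gamma^{3/2})=O(\gamma^{3/2})$; meanwhile $\Var(p)=O(\gamma^{4+\eps})$ handles the other side. Your route is arguably more elementary (no Rouch\'e, just Taylor plus a two-dimensional small-ball bound); the paper's route avoids tracking the random $M$ by working directly with the variance of $p$.

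One genuine slip in your bookkeeping: you write that $\gamma^3 N^{O(1)} \le \gamma^{3/2}$ ``once $c$ is small enough'', but the inequality $N^{-3c+2B}\le N^{-3c/2}$ goes the other way --- you need the exponent $B$ in the bound $M\le N^B$ to be small relative to $c$, not $c$ small. This is harmless since Condition~\eqref{secondDev} of Assumption~\ref{condition:universality} lets you take $B$ arbitrarily small; just rephrase accordingly.
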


Using Theorem \ref{theorem:complex}, this lemma is reduced to the Gaussian case. Let $\tilde H(z) = \sum_{\mu} \tilde \xi_{\mu} f_{\mu}(z)$ where $\tilde \xi_{\mu}$ are standard gaussian. Let $g(z) = \tilde H(x) + \tilde H'(x)(z-x)$ and $p(z) = \tilde H(z) - g(z)$. By Rouch\'{e}'s theorem, 
 $$\P(\CZ_{\tilde{H}}{ B(x,2\gamma)}\ge 2) \le \P\left (\min_{z\in \partial B(x, 2\gamma)}|g(z)|\le \max_{z\in\partial B(x, 2\gamma)}|p(z)|\right ).$$

Both $g(z)$ and $p(z)$ have zero mean. Condition \eqref{secondDev} of Assumption \eqref{condition:universality} shows that for all $z\in B(x, 2\gamma)$,
\begin{equation}
\Var (p(z))= O\left ( N^{-(4+\ep)c}  \Var (\tilde H (x))\right).\nonumber
\end{equation}

Thus with probability at least $1 - O\left (N^{-3c/2}\right)$, 
\begin{equation}
\max_{z\in \partial B(x, 2\gamma)} |p(z)|=O\left ( N^{-(2+\ep)c}  \sqrt{\Var (\tilde H (x))}\right).\label{pbound}
\end{equation}

Now, for $g$, note that since $g$ is a linear function with real coefficients, one has $\min_{z\in \partial B(x, 2\gamma)} |g(z)| = \min |g(x \pm 2\gamma)|$. Condition \ref{firstDev} shows that $g(x\pm 2\gamma)$ is normally distributed with variance
\begin{equation}
\Var(g(x \pm 2\gamma)) \ge 1/2 \Var (\tilde H (x)) \nonumber.
\end{equation}
Therefore, with probability at least $1 - O\left (N^{-3c/2}\right)$, 
$$|g(x \pm 2\gamma)|\ge N^{-3c/2}  \sqrt{\Var (\tilde H (x))}$$
Combining this with \eqref{pbound}, we obtain Lemma \ref{repulsionlmm}.

{\bf Acknowledgement.} The authors are grateful to prof. Z. Rudnick for  helpful comments.

\appendix


\begin{thebibliography}{99}




 \bibitem{Berry} Berry, M. V. Regular and irregular semiclassical wave functions. J. Phys. A 10 (1977), no. 12, 2083-2091.

\bibitem{BBK} E. Bombieri, J. Bourgain and S. V. Konyagin, Roots of Polynomials in Subgroups of forumla and Applications to Congruences, Int Math Res  Notices, 5, 802-834 (2009).

\bibitem{BR0} J. Bourgain and Z. Rudnick, Restriction of toral eigenfunctions to hypersurfaces, C. R. Acad. Sci. Paris, Ser. I 347 (2009) 1249-1253.
\bibitem{BR} J. Bourgain and Z. Rudnick,  On the Geometry of the Nodal Lines of eigenfunctions of the Two-Dimensional Torus, Ann. Henri Poincar\'e 12 (2011), 1027-1053.

\bibitem{BR1} J. Bourgain and Z. Rudnick, Restriction of toral eigenfunctions to hypersurfaces and nodal sets , Geometric and Functional Analysis: Volume 22, Issue 4 (2012), Page 878-937.

\bibitem{BR2} J. Bourgain and Z. Rudnick, Nodal intersections and $L_p$ restriction theorems on the torus. To appear in Israel J. Math., arXiv:1308.4247.

\bibitem{BRS}  J. Bourgain, Z. Rudnick and P. Sarnak, Spatial statistics for lattice points on the sphere I: Individual results, To appear in Bulletin of the Iranian Mathematical Society (BIMS) in honor of Freydoon Shahidi's 70th birthday, \url{https://arxiv.org/abs/1606.05880}.


\bibitem{CG} J. Cilleruelo and A. Granville, Lattice points on circles, squares in arithmetic progressions and sumsets of squares, in Additive Combinatorics, CRM Proceedings \& Lecture Notes, Vol. 43, American Mathematical Society, Providence, RI, 2007, pp. 241-262.

\bibitem{C} M.C. Chang, Factorization in generalized arithmetic progressions and application to the Erd\H{o}s-Szemer\'edi sum-product problems, Geom. Funct. Anal. 13 (4), 720-736 (2003).

\bibitem{DOV} Y. Do, O. Nguyen, and Van Vu, Roots of random polynomials with arbitrary coefficients, submitted, \url{arXiv:1507.04994}.

\bibitem{EH} P. Erd\H{o}s and R. Hall, On the angular distribution of Gaussian integers with fixed
norm, Discrete Mathematics 200 (1999), 87-94.
\bibitem{ESz} P. Erd\H{o}s and  E. Szemer\'edi, On sums and  products of integers,  Studies  in  Pure
Mathematics; To the memory of Paul Tur\'an. P. Erd\H{o}s, L. Alp\'ar and G. Hal\'asz editors, Akad\'emiai Ki\'ad\'o - Birkhauser Verlag, Budapest - Basel-Boston, Mass. 1983, 213-218.

\bibitem{FKW} L. Fainsilber, P. Kurlberg and B. Wennberg, Lattice points on circles and discrete velocity models for the Boltzmann equation, SIAM J. Math. Anal. 37 no. 6 (2006), 1903-1922.

\bibitem{Far} Farahmand, Kambiz. Topics in random polynomials. Vol. 393. CRC Press, 1998.

\bibitem{IK} H. Iwaniec and E. Kowalski, Analytic number theory, American Mathematical Society, Colloquium publications, Volume 53.

\bibitem{KPS}T. Krick, L. M. Pardo and M. Sombra, Sharp estimates for the arithmetic
Nullstellensatz, Duke Math. J., 109, 521-598 (2001).

\bibitem{KKW} M. Krishnapur, P. Kurlberg and I. Wigman, Nodal length fluctuations for arithmetic random waves. Ann. of Math. (2) 177 (2013), no. 2, 699-737.

\bibitem{Landau} E. Landau, Uber die Einteilung der positiven ganzen Zahlen in vier Klassen nach der Mindestzahl der zu ihrer additiven Zusammensetzung erforderlichen Quadrate, Arch. der Math. u. Phys. (3). 13:305-312, 1908.


\bibitem{J} V. Jarnik, Uber die Gitterpunkte auf konvexen Kurven, Math. Z. 24 (1) (1926) 500-518.



\bibitem{NgV}  H. Nguyen and V. Vu, Optimal inverse Littlewood-Offord theorems, Advances in Mathematics, Vol. 226 6 (2011), 5298-5319.

\bibitem{NgV-survey} H. Nguyen and V. Vu, Small probability, inverse theorems, and applications, Paul Erd\H{o}s' 100th anniversary, Bolyai Society Mathematical Studies, Vol. 25 (2013).

\bibitem{OV} O. Nguyen and V. Vu, Local universality of zeros of random trigonometric polynomials, in preparation.

\bibitem{RW} Z. Rudnick and I. Wigman, Nodal Intersection for random eigenfunctions on the torus, Amer. J. of Mathematics, to appear.

\bibitem{RWY}  Z. Rudnick, I. Wigman and Nadav Yesha, Nodal intersections for random waves on the 3-dimensional torus, Annales de l'institut Fourier, to appear. 


\bibitem{S} Sally, J. Roots to research: a vertical development of mathematical problems. American Mathematical Soc., 2007.
\bibitem{TZ} J. Toth and S. Zelditch, Counting nodal lines which touch the boundary of an analytic domain, Journal of Differential Geometry 81 (2009), 649-686.

\bibitem{TV} T. Tao and V. Vu, Local universality of zeroes of random polynomials, International Mathematics Research
Notices, 0-84, 2014.
\bibitem{TVbook}  T. Tao and V. Vu, Additive Combinatorics, Cambridge Univ. Press, 2006.

\end{thebibliography}
\end{document}